\let\ORIlabel\label
\let\ORIrefstepcounter\refstepcounter
\AddToHook{package/hyperref/before}{
   \let\label\ORIlabel 
   \let\refstepcounter\ORIrefstepcounter}

\documentclass[onefignum,onetabnum]{siamart220329}
\allowdisplaybreaks



\usepackage{lipsum}
\usepackage[export]{adjustbox}
\usepackage{amsfonts}
\usepackage{graphicx}
\usepackage{epstopdf}
\usepackage{algorithmic}
\usepackage{cancel}
\usepackage{parskip}
\usepackage{mathtools}
\usepackage{stackengine}
\usepackage{fixmath}
\usepackage{mathrsfs}
\usepackage{dsfont}
\usepackage{amsfonts}
\usepackage{xcolor}
\usepackage{cases}
\usepackage{graphicx} 
\usepackage{enumitem}
\mathtoolsset{centercolon}  
\usepackage{amsmath, amssymb,amsbsy}
\usepackage{bm}



\usepackage{amsmath, amssymb,amsbsy}
\usepackage{mathtools}
\usepackage{stackengine}
\usepackage{fixmath}
\usepackage{mathrsfs}
\usepackage{dsfont}
\usepackage{amsfonts}
\usepackage{xcolor}
\usepackage{cases}
\usepackage{graphicx} 
\usepackage{bm}


\renewcommand{\phi}{\varphi}
\renewcommand{\epsilon}{\varepsilon}
\newcommand{\eps}{\varepsilon}


\newcommand{\cnst}[1]{\mathrm{#1}}

\newcommand{\Id}{\mathbf{I}} 


\DeclareMathOperator*{\argmin}{arg\,min}
\newcommand{\norm}[1]{\left\lVert#1\right\rVert}

\newcommand{\R}{\mathbb{R}}

\newcommand{\N}{\mathbb{N}}
\newcommand{\abs}[1]{\left|#1\right|}
\newcommand{\1}{\mathds{1}}


\newcommand{\Probs}{\mathbb{P}}

\newcommand{\vct}[1]{\mathbold{#1}}

\newcommand{\trace}{\operatorname{tr}}



\newcommand{\diff}{\Phi}

\newcommand{\rmeas}{\mu}

\newcommand{\bpot}{\psi}

\DeclarePairedDelimiterX{\infdivx}[2]{(}{)}{%
  #1\;\delimsize\|\;#2%
}

\newcommand{\divergence}{\operatorname{div}}
\newcommand\D{\mathop{}\cnst{d}}
\newcommand\dd{\cnst{d}} 

\newcommand{\defeq}{\vcentcolon=}
\newcommand{\inprod}[1]{\left\langle #1 \right\rangle}


\newcommand{\helly}{\mathscr{H}} 
\DeclareMathOperator*{\esssup}{ess\,sup}
\DeclareMathOperator*{\essinf}{ess\,inf}
\newcommand{\Leb}{\mathscr{L}}
\newcommand{\Helly}{\helly}
\newcommand{\aederiv}{\hat{\partial}}

\newcommand{\ugen}{v}
\newcommand{\uvel}{u}
\DeclareMathOperator\Ent{\mathsf{Ent}}



\usepackage{tikz}
\usepackage{pgfplots}
\usetikzlibrary{shapes.arrows, patterns, calc}
\usepackage{tikz-3dplot}
\ifpdf
  \DeclareGraphicsExtensions{.eps,.pdf,.png,.jpg}
\else
  \DeclareGraphicsExtensions{.eps}
\fi

\usepgfplotslibrary{groupplots}
\usetikzlibrary{arrows}
\usetikzlibrary{shapes}
\usetikzlibrary{decorations.text}
\usetikzlibrary{quantikz}
\usepackage{siunitx}
\usetikzlibrary{arrows.meta}
\textwidth = 6.5 in
\textheight = 8 in
\pgfplotsset{ compat=1.18,
    standard/.style={
    scale only axis,
    width=0.5\textwidth,
    enlarge x limits=0.05,
    enlarge y limits=0.05,
    max space between ticks=40,
    every axis/.append style={font=\small},
	every legend/.append style={font=\small},
	every node/.append style={font=\small},	
	}
}

\definecolor{steelblue}{HTML}{A1BDC7}
\definecolor{orange}{HTML}{D98C21}
\definecolor{silver}{HTML}{B0ABA8}
\definecolor{rust}{HTML}{B8420F}
\definecolor{seagreen}{HTML}{2E6B69}
\definecolor{joshua}{HTML}{FBDC7F}
\definecolor{darksky}{HTML}{154c79}

\colorlet{lightsilver}{silver!30!white}
\colorlet{darkorange}{orange!85!black}
\colorlet{darksilver}{silver!85!black}
\colorlet{darksteelblue}{steelblue!85!black}
\colorlet{darkrust}{rust!85!black}
\colorlet{darkseagreen}{seagreen!85!black}

\hypersetup{colorlinks=true,linkcolor=darkrust,citecolor=darkseagreen,urlcolor=darksilver}



\ifpdf
  \DeclareGraphicsExtensions{.eps,.pdf,.png,.jpg}
\else
  \DeclareGraphicsExtensions{.eps}
\fi


\newsiamremark{remark}{Remark}
\newsiamremark{hypothesis}{Hypothesis}
\crefname{hypothesis}{Hypothesis}{Hypotheses}
\newsiamthm{claim}{Claim}

\headers{Global strong solutions of 1-d pressureless IGR}{Ruijia Cao and Florian Sch{\"a}fer}

\title{Information geometric regularization of unidimensional pressureless Euler equations yields global strong solutions} 

\author{Ruijia Cao\thanks{Center for Applied Mathematics, Cornell University 
  (\email{rc948@cornell.edu}).}
\and Florian Sch{\"a}fer\thanks{Courant Institute of Mathematical Sciences, New York University
  (\email{florian.schaefer@nyu.edu}).}}

\usepackage{amsopn}
\usepackage{autonum}


\ifpdf
\hypersetup{
  pdftitle={Information geometric regularization of unidimensional pressureless Euler equations yields global strong solutions},
  pdfauthor={Ruijia Cao and Florian Sch{\"a}fer}
}




\begin{document}

\maketitle

\begin{abstract}
  Partial differential equations describing compressible fluids are prone to the formation of shock singularities, arising from faster upstream fluid particles catching up to slower, downstream ones. 
  In geometric terms, this causes the deformation map to leave the manifold of diffeomorphisms.
  Information geometric regularization addresses this issue by changing the manifold geometry to make it geodesically complete.
  Empirical evidence suggests that this results in smooth solutions without adding artificial viscosity.
  This work makes a first step toward understanding this phenomenon rigorously, in the setting of the unidimensional pressureless Euler equations. 
  It shows that their information geometric regularization has smooth global solutions. 
  By establishing $\Gamma$-convergence of its variational description, it proves convergence of these solutions to entropy solutions of the nominal problem, 
  in the limit of a vanishing regularization parameter.
  A consequence of these results is that manifolds of unidimensional diffeomorphisms with information geometric regularization are geodesically complete.
\end{abstract}

\begin{keywords}
  Compressible flow, shock waves, optimal transport, information geometry, inviscid regularization, geometric hydrodynamics, entropy regularization, pressureless Euler
\end{keywords}

\begin{MSCcodes}
35L65, 49Q22, 49J45, 58B20, 76L05 
\end{MSCcodes}

\section{Introduction}
\subsection{Background: Information Geometric Regularization}
\subsubsection*{Shock formation in the Euler equations}
The barotropic Euler equations 
\begin{equation}
    \label{eqn:euler_velocity}
    \partial_t
    \begin{pmatrix}
         \rho \vct{u} \\
         \rho
    \end{pmatrix}
    + \divergence
    \begin{pmatrix}
         \rho \vct{u} \otimes \vct{u} + P(\rho) \Id \\ 
         \rho \vct{u}
    \end{pmatrix}
    = 
    \begin{pmatrix} 
         \vct{f}\\
         0
    \end{pmatrix},
\end{equation}
for velocity $\vct{u}: \R^d \times \R_+ \to \R^d$ and density $\rho: \R^d \times \R_+ \to \R$ describe the dynamics of an inviscid compressible fluid.
The fluid is assumed to be barotropic, meaning that the pressure $P$ is a function of the density $\rho$ only.
Under most initial conditions, solutions of \cref{eqn:euler_velocity} develop jump discontinuities in $\vct{u}$ and $\rho$, in finite time \cite{sideris1985formation,christodoulou1993global,christodoulou2007formation,christodoulou2014compressible,christodoulou2019shock}.
This is due to faster gas particles inevitably catching up to slower ones, resulting in a steepening of the velocity and density profiles.
Such \emph{shock waves} preclude the existence of strong global solutions to \cref{eqn:euler_velocity} and require specialized numerical treatment. 
\subsubsection*{Euler regularization}
With shocks precluding classical, strong solutions, the canonical solution concepts of Euler equations are weak ``entropy'' solutions that arise as limits of solutions under a vanishingly small regularization of \cref{eqn:euler_velocity}.
A popular regularization amounts to adding a diffusion term to the momentum balance equation, amounting to a Navier-Stokes type regularization.
Especially in the non-barotropic case, thermodynamic considerations suggest modification of the density and energy equations as well \cite{guermond2014viscous}.
For numerical solutions using standard methods, the strength of the regularization needs to be chosen proportionally to the mesh width, requiring highly resolved computational grids to avoid artifacts due to excessive dissipation. 
Numerous proposals for nonlinear viscous regularization attempt to introduce dissipation only near shocks \cite{vonneumann1950method,puppo2004numerical,cook2005hyperviscosity,fiorina2007artificial,mani2009suitability,barter2010shock,guermond2011entropy,bruno2022fc,dolejvsi2003some}.
Nonlinear numerical methods such as MUSCL or WENO lower the approximation order near shocks \cite{van1979towards,ray2018artificial,liu1994weighted,harten1997uniformly,shu1998advanced}, implicitly regularizing the problem through the numerical viscosity of lower-order schemes. 
All these approaches amount to a balancing act of avoiding spurious oscillation due to Gibbs-Runge phenomena without dissipating physically meaningful oscillations associated to sound and entropy waves or turbulence \cite{lele2009shock}.
Inviscid regularizations can potentially circumvent these difficulties.
The earliest example we are aware of is \cite{bhat2005lagrangian} that proposes compressible extensions of the Lagrangian averaged Euler/Navier-Stokes models \cite{holm1998eulera,holm1998eulerb} for shock regularization. 
However, numerical studies of such a regularization in \cite{bhat2006lagrangian} found ``\emph{that it is not well-suited for the approximation of shock solutions of the
compressible Euler equations.}''.
Attempts based on the closely related Leray regularization \cite{leray1934essai} failed to capture the correct shock speeds of the Euler equations, making them unsuitable for most applications \cite{bhat2006hamiltonian,bhat2009riemann,bhat2009regularization}.
The failure is likely due to the indifference of Leray regularization to the variable mass density of compressible flows, 
which causes it to violate the local conservation of momentum.
\cite{guelmame2022hamiltonian} proposes inviscid, nondispersive regularizations in the unidimensional case based on prior work on the shallow water equation \cite{clamond2018non}. 
The modified equation is in conservation form and maintains the correct shock speeds. 
But it lacks a mechanism for dissipating energy in smooth solutions, causing non-physical cusps to form \cite{pu2018weakly,liu2019well}. 
\subsubsection*{Information geometric regularization (IGR)}
Recent work by the authors proposes the first inviscid regularization of the multidimensional Euler equations that produces smooth solutions with the correct shock speed in numerical experiments \cite{cao2023information}.
This information geometric regularization (IGR) is given by 
\begin{equation}
    \label{eqn:reg_euler_intro}
    \begin{cases}
         \partial_t
         \begin{pmatrix}
              \rho \vct{u} \\
              \rho
         \end{pmatrix}
         + \divergence 
         \begin{pmatrix}
              \rho \vct{u} \otimes \vct{u} + \left(P(\rho) + \Sigma\right) \Id\\ 
              \rho \vct{u}
         \end{pmatrix}
         = 
         \begin{pmatrix} 
              \vct{f}\\
              0
         \end{pmatrix} \\
         \rho^{-1} \Sigma - \alpha \divergence(\rho^{-1} \nabla \Sigma) = \alpha  \left(\trace^2\left([\cnst{D} \vct{u}]\right) + \trace\left([\cnst{D} \vct{u}]^2\right) \right).
    \end{cases}
\end{equation}
The parameter $\alpha$ controls the width of the smoothed-out shocks, which is proportional to $\sqrt{\alpha}$.
Equation \cref{eqn:reg_euler_intro} arises from a Lagrangian perspective, in which the Euler equations (up to $P, \vct{f}$) are geodesics on a diffeomorphism manifold \cite{arnold1966geometrie,khesin2021geometric}.
Shock formation amounts to the system reaching the manifold's boundary and thus arises from the geodesic incompleteness of the diffeomorphism manifold.
\cite{cao2023information} avoids singular shocks by replacing $L^2$-geodesics with information geometric \emph{dual} geodesics \cite{amari2016information}, designed to make it geodesically complete.
Expressing the curvature contributions of the modified geometry in Eulerian coordinates yields the entropic pressure $\Sigma$ in \cref{eqn:reg_euler_intro}. 
Adding it to the pressure $P$ allows extending IGR to a wide range of Euler-like equations, including Navier-Stokes, MHD, and non-barotropic gas dynamics. 
Owing to their smoothness, solutions of IGR are amenable to high-order numerical methods on a grid of resolution $\sqrt{\alpha}$ without additional shock capturing techniques, promising to greatly simplify the numerical treatment of flows with shocks.
At the same time, they avoid the spurious dissipation of acoustic waves typical of viscous regularizations \cite[Section 3.3]{barham2025hamiltonian}.
Recently, IGR has enabled the simulation of the compressible Navier-Stokes equations at unprecedented scale, exceeding a quadrillion degrees of freedom \cite{wilfong2025simulating}.
\subsection{This work: Rigorous theory for unidimensional pressureless IGR}
In this work, we make a first step toward a rigorous justification of IGR. 
In the unidimensional case with vanishing external forces $f$ and pressure $P$, we show that the IGR equations \cref{eqn:reg_euler_intro} admit global strong solutions with regularity matching the initial conditions that converge, as $\alpha \rightarrow 0$, to entropy solutions of the nominal system.
\subsubsection*{The unidimensional pressureless case\nopunct} of \cref{eqn:reg_euler_intro} on the interval $[a,b]$, with no external forces, no-penetration boundary conditions, and initial velocity $u_0$ and mass density $\frac{\D \rmeas}{\D \Leb}$ is 
\begin{equation}
    \label{eqn:1d-igr-pressureless}
    \begin{cases}
         \partial_t
         \begin{pmatrix}
              \rho u \\
              \rho
         \end{pmatrix}
         + \partial_x
         \begin{pmatrix}
              \rho u^2 + \Sigma \\ 
              \rho u
         \end{pmatrix}
         = 
         \begin{pmatrix} 
              0\\
              0
         \end{pmatrix}, \quad &\text{for} \quad x \in (a, b), t \in \R_+,\\
         \rho^{-1} \Sigma {- \alpha \partial_x(\rho^{-1} \partial_x \Sigma)} = 2 \alpha \left(\partial_x u\right)^2, \quad &\text{for} \quad x \in (a, b), t \in \R_+,\\
         u(a, t) = u(b, t) = \partial_x \Sigma(a,t) = \partial_x \Sigma(b, t) = 0, \quad &\text{for} \quad t \in \R_+, \\
         u(x, 0) = u_0(x), \quad \rho(x, 0) = \frac{\D \rmeas}{\D \Leb}(x), \quad &\text{for} \quad x \in [a,b],
    \end{cases}
\end{equation}
recovering the unidimensional pressureless Euler equations with $\alpha = 0$. 
We observe that \cref{eqn:1d-igr-pressureless} is invariant under a positive scaling of $\rho$. Thus, we can assume without loss of generality that the initial mass distribution $\rmeas$ is a probability measure ($\int_{a}^{b} \D \rmeas(x) = 1$).
The associated deformation maps $\diff_t: \R \to \R$ satisfy $\dot{\diff}_t = u(\diff_t)$, $\diff_0 = \mathrm{Id}$.
For $\alpha = 0$, prior to shock formation, the path $t \mapsto \diff_t$ is an $L^2$ geodesic on the diffeomorphism manifold of deformation maps $\diff$ \cite{khesin2021geometric}.
Shocks form when $\diff_t$ reaches the manifold's boundary and becomes non-invertible. 
They thus arise from the geodesic incompleteness of diffeomorphism manifolds equipped with the Levi-Civita connection associated to the $L^2$ metric. 
Solutions of \cref{eqn:1d-igr-pressureless} with $\alpha > 0$ instead are \emph{dual geodesics} in the sense of \cite{amari2000methods,amari2016information}, defined by the barrier function $\psi(\diff) \coloneqq \int \left(\diff(x) - x\right)^2 / 2 \D \rmeas + \alpha \int  - \log (\partial_x \diff) \D \rmeas$.
Proving the existence of global smooth solutions of \cref{eqn:1d-igr-pressureless} thus amounts to showing the geodesic completeness of the diffeomorphism manifold equipped with the dual connection induced by $\psi$.
\subsubsection*{Our contributions}
This work provides rigorous justification for the use of information geometric regularization to simplify the numerical treatment of the compressible Euler equations, in the unidimensional, pressureless case.
We prove that under minimal regularity assumptions, \cref{eqn:1d-igr-pressureless} has generalized ``variational'' solutions that are unique, stable with respect to perturbations of the initial conditions, and converge, as $\alpha \rightarrow 0$, to entropy solutions of the nominal system.
For regular initial data, we prove that the IGR equations \cref{eqn:1d-igr-pressureless} admit global strong solutions with regularity matching that of the initial conditions. 
In particular, this implies that unidimensional diffeomorphism manifolds equipped with the dual affine connection induced by the IGR barrier $\bpot$ are geodesically complete.

\subsubsection*{Overview} The remainder of this paper is structured as follows. \textbf{In \cref{sec:existence_minimal_regularity}} we introduce a variational solution concept for IGR that is applicable to $\diff$ that are simply monotone, but not necessarily weakly differentiable. 
Using the direct method of the calculus of variations, we prove the existence of global variational solutions under minimal regularity assumptions on the problem data. 
\textbf{In \cref{sec:convergence_variational}} we prove that the functional defining the variational solutions $\Gamma$-converges, as $\alpha \rightarrow 0$, to the functional defining the entropy solutions of the pressureless Euler equation as studied by \cite{dermoune1999probabilistic,natile2009wasserstein,cavalletti2015simple,hynd2019lagrangian,hynd2020trajectory}.
Thus, we establish convergence of IGR solutions to entropy solutions of the nominal system, in the limit of vanishing regularization.
\textbf{In \cref{sec:weak_solutions}}, we show that the deformation maps of variational solutions are weakly differentiable with derivatives uniformly bounded from above and below.  
Thus, they minimize the IGR barrier function $\bpot$, with a linear perturbation given by the time-scaled initial velocity.  
We use the resulting first order optimality condition and a bootstrapping argument to show that the spatial regularity of the deformation maps matches that of the initial data.
\textbf{In \cref{sec:eulerian}} we derive stability and differentiability of the deformation maps with respect to the initial velocity field, allowing us to take their time derivatives. 
This allows us to conclude that the time-parametrized family of deformation maps obtained in \cref{sec:existence_minimal_regularity} satisfy the Lagrangian form of \cref{eqn:1d-igr-pressureless}.
Taking a second time derivative yields solutions to the Eulerian form, with regularity matching that of the initial data.
\textbf{In \cref{sec:conclusion}} we briefly summarize our results and discuss future directions.
Throughout this work, unless otherwise mentioned, function spaces such as $L^p$ spaces, Sobolev spaces $W^{k,p}$, and spaces $C^k$ of continuously differentiable functions are defined on the interval $[a,b]$.
$\Leb$ denotes the Lebesgue measure.

\section{Existence of variational solutions}
\label{sec:existence_minimal_regularity}
\subsection*{Overview}
This section uses methods from the calculus of variations to establish the existence of solutions of \cref{eqn:1d-igr-pressureless}
under minimal regularity assumptions.
\Cref{sec:igr_dual_geodesics} introduces the Lagrangian form of \cref{eqn:1d-igr-pressureless} and shows that it arises as the dual geodesic equation generated by a convex function on the manifold of diffeomorphisms.
\Cref{sec:variational_solutions} introduces a variational solution concept for \cref{eqn:1d-igr-pressureless} under minimal regularity assumptions.
\Cref{sec:existence_minimizers} proves the existence of minimizers of the associated functional.
\Cref{sec:stability_minimizers} shows that these minimizers are stable under perturbations of the initial data and proves the existence of variational solutions.
\subsection{IGR solutions as dual geodesics}
\label{sec:igr_dual_geodesics}
The unidimensional pressureless IGR equation \cref{eqn:1d-igr-pressureless} arises by regularizing the Lagrangian form of the pressureless Euler equation \cite{cao2023information}, yielding the modified equation\footnote{Note that where \cite{cao2023information} uses arbitrary $\diff_0$ and fixes $\rmeas = \Leb$, we choose here to fix $\diff_0 = \mathrm{Id}$ and allow for arbitrary $\rmeas$.}
\begin{equation}
    \label{eqn:lagrangian_eqn}
    \begin{cases}
        \ddot{\diff}_t\frac{\D \rmeas}{\D \Leb} - \alpha \partial_x\left(\left[\partial_x \diff_t \right]^{-2} [\partial_x (\ddot{\diff}_t)] \frac{\D \rmeas}{\D \Leb}\right) 
        + 2 \alpha \partial_x \left(\left[\partial_x \diff_t \right]^{-3}  [\partial_x \dot{\diff}_t]^2 \frac{\D \rmeas}{\D \Leb}\right) = 0,\\
        \dot{\diff}_0 = u_0(x) 
        \quad \quad \diff_0 = \mathrm{Id},
        \quad \quad \forall t \geq 0: \diff_{t}(a) = a, \quad \diff_{t}(b) = b.
    \end{cases}
\end{equation}
Here, the map $\diff_t$ denotes the flow map at time $t$. 
In particular, the path $t \mapsto \diff_t(x)$ traces the path of a particle starting in $x$ under the modified equation.
The unregularized case ($\alpha = 0$) reduces to Newtonian inertial movement.
IGR modifies the notion of inertial movement such as to prevent particle collisions.

This is the dual geodesic equation on the diffeomorphism manifold, derived from the convex function \cite{cao2023information}
\begin{equation}
\bpot(\diff) \coloneqq \int \limits_{a}^{b} \frac{(\diff(x) - x)^2}{2} - \alpha \log\left(\partial_x \diff(x)\right) \D \rmeas(x).
\end{equation}
As such, it has the form 
\begin{equation}
    \label{eqn:dual_inertial}
    \ddot{\diff} 
+\left[\cnst{D}^2\bpot\left(\diff_t\right)\right]^{-1}\left[\cnst{D}^3\bpot\left(\diff_t\right)\right]\left(\dot{\diff}_t,\dot{\diff}_t\right) = 0
\end{equation}
and is solved by solutions of 
\begin{equation}
    \label{eqn:dual_exp}
    \cnst{D} \psi \left(\diff_t\right)(\cdot) = \cnst{D} \psi(\diff_0)(\cdot) + \left[\cnst{D}^2\psi\left(\diff_0\right)\right]\left(t \dot{\diff}_0, (\cdot)\right).
\end{equation}
Here and in the following, the differentials $\cnst{D}^k\psi$ refer to the $k$-linear form on suitable closures of the space of smooth compactly supported vector fields, defined by taking successive directional derivatives. 
Dual geodesics play a key role in information geometry \cite[Sections 1.5 and 6.5 ]{amari2016information}, where they arise as a natural notion of ``straight lines'' on manifolds of probability distributions. 
The most canonical example is the probability simplex $\left\{p \in \R^n: p_i \ge 0, \sum_{i=1}^n p_i = 1\right\}$, equipped with the discrete negative Shannon entropy $\psi(p) = \sum_{i=1}^n p_i \log(p_i)$. 
Dual geodesics arise from the unique torsion-free affine connection that is invariant under sufficient statistics, flat, and geodesically complete \cite{fujiwara2024hommage}. 
The dual geodesic \cref{eqn:dual_inertial} used by information geometric regularization amounts to interpreting $\diff$ as the parameter of a statistical model and $\bpot(\diff)$ as the associated entropy.  
This is justified by viewing $\diff$ as a transport map pushing forward the reference measure $\rmeas$ to the mass distribution $\rho = (\diff)_{\#} \rmeas$. 
For Lebesgue absolutely continuous $\rmeas$, a change of variables yields
\begin{align}
  \label{eqn:entropy_pushforward}
  - \int \limits_{a}^{b} \log \left(\partial_x \diff \right) \D \rmeas 
  = \int \limits_a^b \log \left( \frac{\dd \rho}{\dd \Leb} \circ \diff \right) \rho \circ \diff \cdot \partial_x \diff \D \Leb - \int \limits_a^b \log \left( \frac{\dd \rmeas}{\dd \Leb} \right) \D \rmeas 
  &= \Ent \left( \diff_{\#} \mu \right) - \Ent \left(\mu \right), 
\end{align}
for the continuous negative entropy $\Ent\left(\nu\right) \coloneqq \int \frac{\dd \nu}{\dd \Leb} \log \left(\frac{\dd \nu}{\dd \Leb}\right) \D \Leb$. 
Dual geodesics obtained from the sum of the entropy of the pushforward measure and the squared Euclidean norm of the transport are a compromise between information geometric dual geodesics and $L^2$-Wasserstein geodesics. 
For small $\alpha$, they mostly behave like the Wasserstein geodesics, giving rise to the regular pressureless Euler equation. 
But as $\rho$ approaches singularity due to shock formation, the entropic term dominates and prevents shock singularities from forming.
\begin{remark} [Dual geodesics are not metric]
  Different from most geodesics and affine connections considered in the literature, dual geodesics are not metric, in the sense that dual parallel transport of two vectors $u$, $v$ does generally not conserve a Riemannian metric. 
  Instead, parallel transporting $u$ according to the Euclidean connection and $v$ according to the dual connection induced by $\bpot$ preserves the Hessian metric $\cnst{D}^2\bpot\left(u, v\right)$ \cite[Section 6.1]{amari2016information}. 
  Thus, where metric geodesics $t \mapsto \diff_t$ conserve the energy $\langle \dot{\diff}, \dot{\diff} \rangle_{[\cnst{D}\bpot\left(\diff_t\right)]}$, dual geodesics (and thus, IGR solutions) do not generally conserve an energy.
  This property is crucial, as it allows for energy dissipation in smooth solutions and thus the existence of global smooth solutions that mimic the irreversible behavior of shock waves.
  In contrast, Hamiltonian regularizations such as \cite{guelmame2022hamiltonian} conserve a modified energy ($\int \rho u^2 + \alpha \rho (\partial_x u)^2 \D x$ in the pressureless case). 
  We believe that this is the reason why these regularizations do not have globally smooth solutions. 
  Instead of dissipating energy in smooth solutions, they need to ``hide'' the initial energy in the Dirichlet term of the regularized Hamiltonian by forming cusp singularities \cite{pu2018weakly,liu2019well}.
\end{remark}
\subsection{Variational solutions}
\label{sec:variational_solutions}
Homotopy interior point methods minimize a linear functional under a convex constraint by regularizing it with a barrier function that blows up at the boundary of the constraint set.
Solving the resulting unconstrained optimization problem for decreasing weight $1/t$ of the barrier yields a so-called central path that converges to the solution of the original problem \cite{nemirovski2008interior}.
As observed in \cite[Section 13.5]{amari2016information}, the dual geodesics associated to a convex function $\bpot$ are the central paths of the homotopy interior point method that uses $\bpot$ (or, more generally, the associated Bregman divergence) as a barrier function.
This motivates the observation that Equation \eqref{eqn:dual_exp} is the optimality criterion of the minimization
\begin{equation}
    \label{eqn:variational_exp}
    \min \limits_{\diff \text{ monotone }} \bpot(\diff) - [\cnst{D} \psi(\diff_0)](\diff) - t \left[\cnst{D}^2\psi\left(\diff_0\right)\right]\left(\dot{\diff}_0, \diff\right), 
\end{equation}
motivating us to define, for an arbitrary linear functional $f$ and probability measure $\rmeas$, the functional
\begin{equation}
    \hat{F}_{\alpha}^{f, \rmeas} = 
    \begin{cases}
        \int \limits_{a}^{b} \frac{(\diff(x) - x)^2}{2} - \alpha \log\left(\partial_x \diff\right) \D \rmeas(x) - f(\diff), & \text{if } \diff \text{ is monotone},\\
        \hfill \infty, \hfill & \text{else}.
    \end{cases}
\end{equation}
We assume that $\rmeas$ and the Lebesgue measure $\mathscr{L}$ are absolutely continuous with respect to each other.
We account for $t$-dependence of $\diff_t$ by changing $f$ with $t$, $\hat{F}$ itself is independent of $t$.
For a function $v \in L^1(\rmeas)$, we write $\hat{F}_{\alpha}^{v,\rmeas} \coloneqq \hat{F}_{\alpha}^{u \mapsto \int u v  \D \rmeas(x), \rmeas}$.
We study the existence of solutions to \cref{eqn:1d-igr-pressureless} through this variational principle.
The convexity of $\hat{F}_{\alpha}^{f, \rmeas}$ suggests using the direct method of the calculus of variations \cite{dacorogna2007direct}. 
But the function $\log(\partial_x \diff)$ does not play well with any function space that we are aware of. 
A topology rendering it lower-semicontinuous seemingly needs to control weak derivatives, but due to the slow growth of the logarithm, it does not yield coercivity in any Sobolev space $W^{k, p}$ with $p > 1$. 
Sobolev spaces with $p = 1$ or the log-Orlicz-Sobolev space, on the other hand, are not reflexive \cite{clason2021entropic} and thus their weak-* compactness does not imply the weak compactness required by the direct method.
We circumvent this problem by using the monotonicity of $\diff$.
For sufficiently regular $\rmeas, \diff$, a change of variables as in Equation~\cref{eqn:entropy_pushforward}  reveals that the functional $\hat{F}_{\alpha}^{f, \rmeas}$ is equal, up to a constant ($\alpha \cdot \Ent(\rmeas)$), to the relaxed functional 
\begin{equation}
  \label{eqn:relaxed_functional}
    F_{\alpha}^{f, \rmeas} = 
    \begin{cases}
        \int \limits_{a}^{b} \frac{(\diff(x) - x)^2}{2} \D \rmeas(x) + \alpha \cdot \Ent(\diff_{\#} \rmeas)   - f(\diff) , & \text{if } \diff \text{ is monotone},\\
        \hfill \infty, \hfill & \text{else}.
    \end{cases}
\end{equation}
For a function $v \in L^1(\rmeas)$ we write $F_{\alpha}^{v, \rmeas} \coloneqq F_{\alpha}^{u \mapsto \int u v  \D \rmeas(x), \rmeas}$ as before.
Here, $\Ent(\nu) $ denotes the negative Shannon entropy of $\nu$, defined by
\begin{align}
  \Ent\left(\nu\right)
  =
  \begin{cases}
    \int \frac{\D \nu}{\D \Leb} \log \left( \frac{\D \nu}{\D \Leb} \right) \D \Leb & \text{if } \nu \ll \Leb, \\
    \infty & \text{otherwise}.
  \end{cases}
\end{align}
The pushforward $\diff_{\#} \rmeas$ is defined by $\diff_{\#} \rmeas(A) = \rmeas(\diff^{-1}(A))$ for any Borel measurable set $A$.
Casting the problem in this form allows us to first establish the existence of monotone (but not necessarily weakly differentiable) minimizers using the excellent weak compactness properties of measures and monotone functions.  
Minimizers of $F_{\alpha}^{f, \rmeas}$, for $f$ encoding the initial density and velocity, are the weakest notion of solution of \eqref{eqn:1d-igr-pressureless} that we consider in this work.
\begin{definition}[Variational solution]
  \label{def:variational_solution}
  Let $\Probs([a,b])$ denote the set of probability measures on $[a, b]$. For $\rmeas \in \Probs([a, b])$ and measurable initial data $u_0 \colon [a,b] \to \R$, define the functional
  \begin{equation}
    f_{\alpha}^{u_0}(\diff) \coloneqq \int \limits_{a}^{b} \diff \cdot u_0 + \alpha \partial_x \diff \cdot \left(\partial_x u_0 \right) \D \rmeas(x) 
    - \int \limits_{a}^{b} \alpha \partial_x \diff \D \rmeas(x).
  \end{equation}
  A \emph{variational solution} of \eqref{eqn:1d-igr-pressureless} with initial data $u_0$ is a pair $u: [a,b] \times \R_+ \to \R$ and $\rho: \R_+ \to \Probs([a,b])$, such that for $\diff_t$ a monotone minimizer of $F_{\alpha}^{f_{\alpha}^{t u_0}, \rmeas}$, and $t \mapsto \diff_t$ weakly differentiable as a map from $t$ to $L^2(\rmeas)$, 
  \begin{align}
    \label{eqn:eulerian_sticky_particles}
    \rho_t &= \left(\diff_{t}\right)_{\#} \rmeas, \\
    u(y, t) &= 
    \begin{cases}
        \dot{\diff}_t( \diff_t^{-1}(y)), \hfill &\text{if $\# \diff_t^{-1}(y) = 1$},\\
        0, \hfill &\text{if $\#\diff_t^{-1}(y) = 0$}\\
        \frac{1}{\rmeas \left(\diff_t^{-1}(y) \right)} \int \limits_{\diff_t^{-1}\left(y\right)} \dot{\diff}_t(x) \D \rmeas(x), \hfill &\text{else.}
    \end{cases}
  \end{align}
  It is a \emph{regularized} variational solution if $\diff_t$ instead minimizes $F_{\alpha}^{t u_0 + f_{\alpha}^{0}, \rmeas}$.
  Here, $\#(\cdot)$ denotes set cardinality.
\end{definition}
\begin{lemma}
  With this definition, $u(\cdot, t)$ is in $L^2(\rho_t)$, for all $t \ge 0$ outside a set of measure zero. The function $u(\cdot, t)$ does not depend on the choice of $\dot{\diff}_t$ in the equivalence class up to $\rmeas$-a.e. equality.
\end{lemma}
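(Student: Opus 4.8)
The plan is to reduce both assertions to a single \emph{disintegration} of $\rmeas$ along the flow map $\diff_t$; this makes the three-case definition of $u(\cdot,t)$ transparent and turns both claims into one-line computations. Throughout, ``does not depend on the choice of $\dot\diff_t$'' is read in the only sense that makes sense, namely: two representatives agreeing $\rmeas$-a.e.\ produce velocity fields agreeing $\rho_t$-a.e., i.e.\ the same element of $L^2(\rho_t)$.

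\textbf{Step 1 (structure of the fibers).} Since $\diff_t$ is monotone, it is Borel measurable and, for every $y$, the fiber $\diff_t^{-1}(y)=\{x:\diff_t(x)=y\}$ is a (possibly empty or degenerate) subinterval of $[a,b]$. Partition $[a,b]$ into $B$ (empty fiber), $A$ (singleton fiber) and $C$ (fiber with at least two points). The fibers over $C$ are pairwise disjoint nondegenerate intervals, so $C$ is at most countable; moreover, because $\rmeas$ and $\Leb$ are mutually absolutely continuous, $\rmeas(\diff_t^{-1}(y))>0$ for every $y\in C$ (so the ``else'' branch is well posed) while $\rmeas$ is atomless, so $\rho_t(\{y\})=\rmeas(\diff_t^{-1}(y))=0$ for $y\in A$; also $\rho_t(B)=\rmeas(\diff_t^{-1}(B))=\rmeas(\emptyset)=0$. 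Hence $\rho_t$-a.e.\ $y$ lies in $A\cup C$.

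\textbf{Step 2 (disintegration).} On the compact metric space $[a,b]$, the disintegration theorem provides a $\rho_t$-a.e.\ unique family $\{\rmeas_y\}_y$ of Borel probability measures, each concentrated on $\diff_t^{-1}(y)$, with $\rmeas=\int \rmeas_y\,\D\rho_t(y)$. For $\rho_t$-a.e.\ $y\in A$ the fiber is a single point, so $\rmeas_y$ is the Dirac mass there; for every $y\in C$ (a $\rho_t$-atom, hence not in any $\rho_t$-null exceptional set), testing the disintegration identity on Borel subsets of $\diff_t^{-1}(y)$ and using disjointness of fibers forces $\rmeas_y$ to be the normalized restriction of $\rmeas$ to $\diff_t^{-1}(y)$. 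Comparing with \cref{def:variational_solution} and recalling $\rho_0=\rmeas$, one obtains $u(y,t)=\int \dot\diff_t\,\D\rmeas_y$ for $\rho_t$-a.e.\ $y$ (they agree on $A\cup C$, and $B$ is $\rho_t$-null); since $\dot\diff_t\in L^2(\rmeas)\subseteq L^1(\rmeas)$, the right-hand side is a $\rho_t$-measurable function of $y$, so $u(\cdot,t)$ is $\rho_t$-measurable.

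\textbf{Step 3 (the two conclusions).} By Jensen's inequality, $|u(y,t)|^2\le \int|\dot\diff_t|^2\,\D\rmeas_y$ for $\rho_t$-a.e.\ $y$; integrating against $\rho_t$ and applying the disintegration identity gives $\int_{[a,b]}|u(\cdot,t)|^2\,\D\rho_t \le \int\!\!\int |\dot\diff_t|^2\,\D\rmeas_y\,\D\rho_t = \int|\dot\diff_t|^2\,\D\rmeas = \|\dot\diff_t\|_{L^2(\rmeas)}^2<\infty$, so $u(\cdot,t)\in L^2(\rho_t)$. For independence of the representative, let $\dot\diff_t'=\dot\diff_t$ $\rmeas$-a.e.\ and put $N=\{\dot\diff_t\neq\dot\diff_t'\}$, so $\rmeas(N)=0$. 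Then $\int \rmeas_y(N)\,\D\rho_t(y)=\rmeas(N)=0$, hence $\rmeas_y(N)=0$ for $\rho_t$-a.e.\ $y$, and for those $y$ we get $\int\dot\diff_t\,\D\rmeas_y=\int\dot\diff_t'\,\D\rmeas_y$; that is, the two velocity fields coincide $\rho_t$-a.e. (A direct check is also available: on $B$ both are $0$; on $C$ one has $\rmeas(\diff_t^{-1}(y)\cap N)=0$ so the averages agree for every $y\in C$; on $A$ the exceptional set of $y$ has $\rho_t$-mass at most $\rmeas(N)=0$ because distinct points of $A$ have disjoint singleton fibers.)

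\textbf{Main obstacle.} Nothing here is deep; the only real work is the measure-theoretic bookkeeping of Step 2 --- identifying the elementary pointwise formula of \cref{def:variational_solution} with the abstract disintegration $\rho_t$-a.e. The facts that $\rho_t(B)=0$, that the fibers over $C$ carry positive $\rmeas$-mass (so the normalizing constant is nonzero), and that $\rmeas$ is atomless on $A$ are precisely what is needed, and they follow from the monotonicity of $\diff_t$ together with the mutual absolute continuity of $\rmeas$ and $\Leb$. Once this identification is in place, the $L^2$ bound is Jensen and the representative-independence is the identity $\int\rmeas_y(N)\,\D\rho_t=\rmeas(N)$.
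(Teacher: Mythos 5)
Your proof is correct. It takes a somewhat different technical route from the paper's, and the comparison is worth making explicit. The paper's proof is a short duality/change-of-variables argument: for any test function $v$ it computes
\begin{equation}
  \int_a^b u(y,t)\,v(y)\,\D\rho_t(y) \;=\; \int_a^b u(\diff_t(y),t)\,v(\diff_t(y))\,\D\rmeas \;=\; \int_a^b \dot\diff_t(y)\,v(\diff_t(y))\,\D\rmeas,
\end{equation}
then reads off both conclusions from the right-hand side (which manifestly depends only on the $\rmeas$-a.e.\ class of $\dot\diff_t$; setting $v=u(\cdot,t)$ and applying Cauchy--Schwarz gives $\|u(\cdot,t)\|_{L^2(\rho_t)}\le\|\dot\diff_t\|_{L^2(\rmeas)}$). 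The key step there --- the second equality, which says that replacing $\dot\diff_t$ by its fiberwise average $u(\diff_t(\cdot),t)$ does not change integrals against $\diff_t$-measurable integrands --- is stated without justification. Your Step 2 is precisely the missing justification: you exhibit $u(\diff_t(\cdot),t)$ as the conditional expectation $\int\dot\diff_t\,\D\rmeas_y$ via a disintegration of $\rmeas$ along $\diff_t$, and then get the $L^2$ bound by Jensen (which avoids the mild circularity of applying Cauchy--Schwarz before knowing $u(\cdot,t)\in L^2(\rho_t)$) and the representative-independence from $\int\rmeas_y(N)\,\D\rho_t(y)=\rmeas(N)$. Your bookkeeping of the three regions $A,B,C$ --- using mutual absolute continuity of $\rmeas$ and $\Leb$ to get $\rho_t(B)=0$, atomlessness on $A$, and positivity of the normalizing mass on $C$ --- is exactly what is needed to match the piecewise formula of \cref{def:variational_solution} with the disintegration, and all of those facts are available under the paper's standing hypotheses. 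In short: the paper's proof is slicker and yours is more self-contained; they are the same argument at different levels of detail, with your disintegration replacing the paper's unexplained use of the conditional-expectation identity.
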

\begin{proof}
  For measurable $v\colon [a, b] \to \R$, we have by the change of variables theorem \cite[Theorem 3.6.1]{bogachev2007measure}, that  
  \begin{equation}
    \int \limits_{a}^{b} u(y, t) \cdot v(y) \D \rho_t(y) = \int \limits_{a}^{b} u(\diff_t(y), t) \cdot v(\diff_t(y)) \D \rmeas = \int \limits_{a}^{b} \dot{\diff}_{t} v(\diff_t(y)) \D \rmeas.
  \end{equation}
  Setting $v = u(\cdot, t)$ shows that $u(\cdot, t) \in L^2(\rho_t)$, so long as $\dot{\diff}_t$ is in $L^2(\rmeas)$. 
  Since $t \to \dot{\diff}_t$ is the weak derivative of $t$ to $L^2(\rmeas)$, it is in $L^1_{\mathrm{loc}}(\R_+; L^2(\rmeas))$. 
  Thus $\dot{\diff}_t \in L^2(\rmeas)$ for almost all $t$.
  We observe that the right-hand side of the equation is independent of the choice of $\dot{\diff}_t$ in the equivalence class up to $\rmeas$-a.e. equality.
\end{proof}
\begin{remark}
  The motivation for introducing regularized variational solutions is that many applications feature initial value problems with discontinuous $\frac{\D \rmeas}{\D \Leb}, u_0$.  
  IGR equations may not have solutions in this case, requiring smoothing of the initial data \cite{cao2023information}. 
  This smoothing is implicit in the definition of regularized variational solutions, since the initial velocity $u_0$ is replaced by the solution $w$ of the elliptic problem $f_{\alpha}^{w} = u_0$.
\end{remark}
\begin{remark}
  The variational problems $F$ and $\hat{F}$ are reminiscent of prior works regularizing  optimal transport using Dirichlet energies \cite{de2016monge} or entropic terms \cite{cuturi2013sinkhorn,clason2021entropic}.
  A key difference is that our variational problem describes the exponential map for an initial velocity and regularizes the end point $\rho$ (note that the regularization depends on $\diff$ only through its pushforward). 
  Instead, \cite{de2016monge,cuturi2013sinkhorn,clason2021entropic} consider the path between two fixed end points and regularize the transport plan between them. 
\end{remark}
\subsection{Existence of minimizers}
\label{sec:existence_minimizers}
We prove the existence of minimizers of $F_{\alpha}^{f, \rmeas}$ in the Helly space 
\begin{equation}
  \helly \coloneqq \left\{ \diff\colon [a, b] \to [a, b]: \diff \text{ monotone and } \diff(a) = a, \diff(b) = b\right\}.
\end{equation}
We begin by recalling basic topological properties of $\helly$.
\begin{lemma}\label[lemma]{le:helly_properties}
  The Helly space $\helly$, equipped with the topology of point-wise convergence, is compact and first countable (thus, compactness implies sequential compactness).
  Its elements are Borel measurable.
\end{lemma}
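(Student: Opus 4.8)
The plan is to establish the three asserted properties of the Helly space $\helly$ — compactness, first countability, and Borel measurability of its elements — using classical facts about monotone functions, in particular Helly's selection theorem, which is the namesake of the space.

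\textbf{Compactness.} First I would observe that $\helly$ is a subset of the product space $[a,b]^{[a,b]}$, which is compact in the product topology (i.e.\ the topology of pointwise convergence) by Tychonoff's theorem. It then suffices to show $\helly$ is closed in this product. If $\diff_n \to \diff$ pointwise with each $\diff_n$ monotone (say nondecreasing) and fixing the endpoints, then for $x \le y$ we have $\diff_n(x) \le \diff_n(y)$ for all $n$, and passing to the limit gives $\diff(x) \le \diff(y)$; similarly $\diff(a) = \lim \diff_n(a) = a$ and $\diff(b) = b$. Hence $\diff \in \helly$, so $\helly$ is closed, hence compact. (One should fix a monotonicity convention — nondecreasing — in the definition; the decreasing case is analogous but the two are not both closed in the same set, so I would assume the paper intends nondecreasing maps.)

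\textbf{First countability and sequential compactness.} The subtlety is that the full product $[a,b]^{[a,b]}$ is \emph{not} first countable (it is a product of uncountably many factors), so first countability of $\helly$ does not come for free from the ambient space and genuinely uses monotonicity. Here I would invoke Helly's selection theorem: any sequence in $\helly$ — being uniformly bounded monotone functions — has a subsequence converging pointwise on $[a,b]$ to a monotone function, which by the closedness argument above lies in $\helly$. This gives \emph{sequential} compactness directly. For first countability itself, the cleanest route is to note that a monotone function $[a,b]\to[a,b]$ has at most countably many discontinuities, and more usefully that pointwise convergence of a sequence of monotone functions to a monotone limit $\diff$ is equivalent to convergence at every point of a fixed countable dense set containing the (countably many) discontinuity points of $\diff$ — so the subspace topology at $\diff$ has a countable neighborhood base given by finite-intersection cylinders over that countable set. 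Alternatively, and more slickly, one can embed $\helly$ into $L^1([a,b])$ (or identify monotone functions with their generalized inverses / associated measures) where pointwise-a.e.\ convergence of monotone functions coincides with $L^1$ convergence, and metrizability gives first countability; I would pick whichever of these the subsequent sections actually rely on. In any case, first countability plus compactness yields sequential compactness, consistent with the parenthetical remark.

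\textbf{Borel measurability.} Each $\diff \in \helly$ is monotone, and a monotone function on an interval is Borel measurable — its set of discontinuities is countable, and on the complement it agrees with a function that is, e.g., left-continuous; monotonicity makes preimages of rays $\{\diff \le c\}$ into intervals, hence Borel. This is a standard fact I would simply cite.

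\textbf{Main obstacle.} The only genuinely nontrivial point is first countability, precisely because it fails for the ambient product and so must be extracted from monotonicity; everything else (closedness, Tychonoff, Helly selection, measurability of monotone functions) is routine. I would therefore spend the bulk of the argument pinning down the countable neighborhood base at a monotone $\diff$ via its countable discontinuity set, or alternatively defer to the $L^1$/measure-theoretic identification if that viewpoint is used downstream.
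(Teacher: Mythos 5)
Your main argument is correct and considerably more informative than the paper's: the paper proves this lemma by citing \cite[Examples 105 and 107]{steen1978counterexamples}, whereas you reconstruct the standard argument. Compactness via Tychonoff plus closedness of $\helly$ in $[a,b]^{[a,b]}$ is fine (and in fact the boundary conditions $\diff(a)=a<b=\diff(b)$ already force the nondecreasing convention, so no ambiguity arises). Your first-countability argument via a countable dense set $Q$ containing the discontinuities of $\diff$ is the right one, though it should be phrased in terms of neighborhoods rather than sequences: for a continuity point $x$ of $\diff$, pick $p,q\in Q$ with $p<x<q$ and $\diff(q)-\diff(p)$ small; then any monotone $g$ close to $\diff$ at $p$ and $q$ is automatically close at $x$, so cylinders over finite subsets of $Q$ with rational radii form a countable base at $\diff$. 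Borel measurability of monotone functions is standard. However, your proposed ``slicker'' alternative is genuinely wrong and should be discarded: the Helly space is a classical example of a compact, first countable, separable Hausdorff space that is \emph{not} metrizable (indeed not second countable; it contains a copy of the split interval), so no metric embedding can produce the pointwise topology. Concretely, the map into $L^1$ is not even injective --- two elements of $\helly$ that differ only in their value at a single jump point (left- vs.\ right-continuous version of the same step) are distinct points of $\helly$ in the topology of pointwise convergence but identical in $L^1$ --- and ``pointwise-a.e.\ convergence'' is simply not the topology the lemma specifies. Stick with your primary countable-neighborhood-base argument.
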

\begin{proof}
  See \cite[Example 107 and Example 105]{steen1978counterexamples}.
\end{proof}
\begin{corollary} \label{le:seq_comp_mono_measure}
  Let $\nu$ be a probability measure on $[a, b]$ and $(\diff_n)_{n \ge 1}$ any sequence in $\helly$.
  Then there exist $\diff \in \helly$ and a subsequence $(\diff_{n_k})_{k \ge 1}$ such that $\diff_{n_k} \to \diff$ pointwise on $[a, b]$ and $(\diff_{n_k})_{\#} \nu \to \diff_{\#} \nu$ weakly.
\end{corollary}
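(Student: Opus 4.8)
The plan is to obtain the subsequence purely from the topological compactness of $\helly$ and then to upgrade pointwise convergence of the maps to weak convergence of their pushforwards by a dominated-convergence argument.

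First I would apply \cref{le:helly_properties}: since $\helly$ equipped with the topology of pointwise convergence is compact and first countable, it is sequentially compact, so the sequence $(\diff_n)_{n\ge1}$ has a subsequence $(\diff_{n_k})_{k\ge1}$ converging pointwise on $[a,b]$ to some $\diff\in\helly$. This already establishes the first assertion. Moreover, every element of $\helly$ is Borel measurable (again by \cref{le:helly_properties}), so the pushforwards $(\diff_{n_k})_{\#}\nu$ and $\diff_{\#}\nu$ are well-defined Borel probability measures on $[a,b]$.

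For the weak convergence I would use that $[a,b]$ is a compact metric space, so it suffices to prove $\int_a^b g\,\D(\diff_{n_k})_{\#}\nu\to\int_a^b g\,\D\diff_{\#}\nu$ for every $g\in C([a,b])$. By the change-of-variables formula \cite[Theorem 3.6.1]{bogachev2007measure}, this is the same as
\begin{equation*}
  \int_a^b g\bigl(\diff_{n_k}(x)\bigr)\,\D\nu(x)\;\longrightarrow\;\int_a^b g\bigl(\diff(x)\bigr)\,\D\nu(x).
\end{equation*}
Continuity of $g$ together with $\diff_{n_k}(x)\to\diff(x)$ for all $x$ gives pointwise convergence of the integrands, while $\abs{g(\diff_{n_k}(x))}\le\norm{g}_{\infty}$ furnishes a $\nu$-integrable dominating function because $\nu$ is a probability measure; the dominated convergence theorem then yields the limit along the very same subsequence, with no further extraction needed.

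I do not anticipate any genuine obstacle: the corollary is essentially a repackaging of \cref{le:helly_properties} together with dominated convergence. The only points that warrant a word of care are the measurability of the compositions $g\circ\diff_{n_k}$ and $g\circ\diff$, which follows from the Borel measurability of elements of $\helly$ and continuity of $g$, and the fact that compactness of $[a,b]$ makes every continuous test function bounded, so that no tightness argument is required.
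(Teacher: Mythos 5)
Your proof is correct and follows essentially the same route as the paper: extract a pointwise-convergent subsequence via the compactness and first countability of $\helly$, then deduce weak convergence of the pushforwards by change of variables and the bounded (dominated) convergence theorem. The only cosmetic difference is that you invoke dominated convergence with the constant bound $\norm{g}_{\infty}$ where the paper names the bounded convergence theorem, which is the same tool.
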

\begin{proof}
  Write $\mu_n = (\diff_{n})_{\#} \nu$.
  By \cref{le:helly_properties}, there exists a subsequence $(\diff_{n_k})_{k\ge 1}$ such that $\diff_{n_k} \to \diff$ pointwise on $[a, b]$. 
 Let $\mu = \diff_{\#} \nu$.
 Then, we claim that $\mu_{n_k} \to \mu$ weakly as $k \to \infty$.
 Indeed, for any $f \in C_b([a, b])$,
 \begin{align}
   \lim_{k \to \infty} \int f \D \mu_{n_k} 
   &= \lim_{k \to \infty} \int f \D (\diff_{n_k} )_{\#} \nu \\ 
   &= \lim_{k \to \infty} \int f\circ \diff_{n_k} \D \nu \quad \text{Change of Variable} \\
   &= \int f \circ \diff \D \nu \quad \text{Bounded Convergence Theorem} \\
   &= \int f \D (\diff)_{\#} \nu \\
   &= \int f \D \mu.
 \end{align}
 Therefore, $\mu_{n_k} \to \mu$ weakly.
\end{proof}
Weak compactness of $\helly$ and lower semicontinuity of $\Ent\left(\cdot\right)$ imply the existence of minimizers of $F_{\alpha}^{f, \rmeas}$.
\begin{theorem}[Minimization of $F^{\ugen, \rmeas}_\alpha$]\label{thm:minimizer_existence}
  For $\ugen \in L^1(\rmeas)$, $\mu \ll \mathscr{L}$ such that 
  \begin{equation}
    0 < \essinf_{x \in [a, b]} \mu(x) \le \esssup_{x \in [a, b]} \mu(x) < \infty,
  \end{equation}
  there exists a unique $\diff^* \in \helly$ such that 
  \begin{equation}
    F_{\alpha}^{\ugen, \rmeas}(\diff^*) = \inf_{\diff \in \helly} F_{\alpha}^{\ugen, \rmeas}(\diff).
  \end{equation}
\end{theorem}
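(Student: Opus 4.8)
The plan is to apply the direct method of the calculus of variations on the Helly space $\helly$, using the compactness supplied by \cref{le:helly_properties} and \cref{le:seq_comp_mono_measure} and the lower semicontinuity of the relative entropy. First I would reduce the claim about $F_\alpha^{\ugen,\rmeas}$ to the statement that the map $\diff \mapsto \KL(\diff_\#\rmeas \parallel \rmeas)$ is sequentially lower semicontinuous on $\helly$ with respect to pointwise convergence, and that the remaining two terms, $\diff\mapsto \int \tfrac{(\diff(x)-x)^2}{2}\D\rmeas(x)$ and $\diff\mapsto \int \diff\,\ugen\,\D\rmeas$, are continuous on $\helly$. Continuity of both follows from the bounded convergence theorem: along a pointwise-convergent sequence $\diff_{n}\to\diff$ in $\helly$, the integrands $(\diff_n(x)-x)^2/2$ and $\diff_n(x)\ugen(x)$ are dominated by $(b-a)^2/2$ and $(b-a)|\ugen(x)|\in L^1(\rmeas)$ respectively, and converge pointwise. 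For the entropy term, the standard fact is that $\nu\mapsto\KL(\nu\parallel\rmeas)$ is lower semicontinuous with respect to weak convergence of probability measures (it is a sup of affine continuous functionals, by the Donsker--Varadhan variational formula), so combining this with the weak convergence $(\diff_n)_\#\rmeas \to \diff_\#\rmeas$ from \cref{le:seq_comp_mono_measure} yields the desired lower semicontinuity.

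With these pieces in place, existence follows the usual template: take a minimizing sequence $(\diff_n)\subset\helly$; since $\helly$ is sequentially compact (\cref{le:helly_properties}), extract a subsequence converging pointwise to some $\diff^*\in\helly$, refining as in \cref{le:seq_comp_mono_measure} so that the pushforwards also converge weakly; then
\[
F_\alpha^{\ugen,\rmeas}(\diff^*) \le \liminf_{k\to\infty} F_\alpha^{\ugen,\rmeas}(\diff_{n_k}) = \inf_{\diff\in\helly} F_\alpha^{\ugen,\rmeas}(\diff),
\]
so $\diff^*$ is a minimizer. One should first rule out the degenerate case $\inf F_\alpha^{\ugen,\rmeas}=+\infty$: the identity map $\mathrm{Id}\in\helly$ has $\KL(\mathrm{Id}_\#\rmeas\parallel\rmeas)=\KL(\rmeas\parallel\rmeas)=0$ and the other terms are finite since $\ugen\in L^1(\rmeas)$, so the infimum is finite and the minimizing sequence has eventually finite energy, which in particular forces $(\diff_{n_k})_\#\rmeas \ll \rmeas$ along the tail.

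For uniqueness, I would argue by strict convexity. The constraint set $\helly$ is convex (a convex combination of monotone functions fixing the endpoints is again such), $\diff\mapsto\int\tfrac{(\diff(x)-x)^2}{2}\D\rmeas(x)$ is strictly convex in $\diff$ in $L^2(\rmeas)$, the entropy term $\diff\mapsto\KL(\diff_\#\rmeas\parallel\rmeas)$ is convex (using that $\rmeas\ll\Leb\ll\rmeas$ and the change of variables that rewrites it as $\int -\log\partial_x\diff\,\D\rmeas$ up to the relaxation, together with convexity of $r\mapsto-\log r$ along linear interpolations of the monotone maps), and the linear term is affine. If two distinct minimizers existed, the midpoint would have strictly smaller energy from the quadratic term alone, a contradiction. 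The main obstacle I anticipate is the lower semicontinuity of the entropy under the pushforward operation rather than on a fixed reference space — one must be careful that $\diff_n\to\diff$ pointwise genuinely implies $(\diff_n)_\#\rmeas\to\diff_\#\rmeas$ weakly, which is exactly the content of \cref{le:seq_comp_mono_measure}, and that weak lower semicontinuity of $\KL(\cdot\parallel\rmeas)$ is the correct tool; the convexity bookkeeping for the relaxed entropy functional $\KL(\diff_\#\rmeas\parallel\rmeas)$ as a function of $\diff\in\helly$ (as opposed to its unrelaxed counterpart, which may be $+\infty$ on non-differentiable monotone maps) is the other point requiring genuine care.
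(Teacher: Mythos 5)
Your existence argument mirrors the paper's almost exactly: finiteness of the infimum via $\mathrm{Id}\in\helly$ and $\KL\ge 0$, the direct method on $\helly$ using the compactness from \cref{le:helly_properties} together with the weak convergence of pushforwards from \cref{le:seq_comp_mono_measure}, and weak lower semicontinuity of $\KL(\cdot\parallel\rmeas)$. Your use of dominated/bounded convergence in place of the paper's Fatou argument, and of the Donsker--Varadhan variational formula in place of the cited reference for the lower semicontinuity of $\KL$, are both perfectly acceptable variants, so the existence half is fine.

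The gap is in the uniqueness half, and it is precisely the point you flag as ``requiring genuine care'' at the end without actually resolving it. Strict convexity of $\diff\mapsto\int\tfrac12(\diff-\mathrm{Id})^2\D\rmeas$ distinguishes minimizers only up to $\rmeas$-a.e.\ equality, which pins down a unique element of $\helly$ only when the density $\frac{\D\rmeas}{\D\Leb}$ is strictly positive, an assumption that is not among the theorem's stated hypotheses. More substantively, the convexity of $\diff\mapsto\KL(\diff_\#\rmeas\parallel\rmeas)$ along segments of $\helly$ is not automatic: the rewriting $\KL(\diff_\#\rmeas\parallel\rmeas)=\int_a^b-\log(\aederiv_x\diff)\,\D\rmeas$ that you invoke rests on the change-of-variables formula of \cite[Theorem~11.1]{villani2009optimal}, which requires $\diff$ to be injective, and injectivity of finite-energy competitors is established only in \cref{lem:injective_minimizer}, again under the strict positivity hypothesis (and your appeal to $\Leb\ll\rmeas$ is likewise not in the theorem's assumptions, which give only $\rmeas\ll\Leb$). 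The paper does not in fact prove uniqueness inside the proof of this theorem; it defers it to \cref{thm:uniqueness_minimizers}, where the strict positivity assumption is stated explicitly and the convexity bookkeeping is carried out via \cref{lem:lipschitz_cont_solution_map}. Your instinct about where the difficulty lies is right, but the argument you sketch does not close it without the additional hypotheses and a careful discharge of the injectivity and change-of-variables steps.
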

\begin{proof}
  First, for any $\diff \in \helly$, Jensen's inequality implies that $\Ent\left(\diff_{\#} \rmeas \right) > -\infty$.
  Since $a \le \diff \le b$ for all $\diff \in \helly$, we see that $\inf_{\diff \in \helly} F_{\alpha}^{\ugen, \rmeas}(\diff) > -\infty$.
  Furthermore, since $\mathrm{Id} \in \helly$, we see that $\inf_{\diff \in \helly} F_{\alpha}^{\ugen, \rmeas}(\diff) < \infty$.
  Hence, there exists $\gamma \in \R$ such that $\gamma = \inf_{\diff \in \helly} F_{\alpha}^{\ugen, \rmeas}(\diff)$.
  Let $(\diff_k)_{k \ge 1} \subseteq \helly$ be a minimizing sequence. 
  By \cref{le:seq_comp_mono_measure}, there exist a subsequence $(\diff_{n_k})_{k \ge 1}$ and
  $\diff^* \in \helly$ such that $\diff_{n_{k}} \longrightarrow \diff^*$ pointwise and $(\diff_{n_k})_{\#} \rmeas \longrightarrow \diff^*_{\#} \rmeas$.
  Therefore, by Fatou's lemma and weak lower semi-continuity of $\Ent(\cdot)$ (see \cite[Theorem 1]{posner1975random}, for example), we have
  \begin{align}
   \gamma 
   &= \liminf_{k \to \infty} 
   \frac{1}{2} \int \left(\diff_{n_k}(x) - \mathrm{Id} \right)^2 
   - \diff_{n_k}(x) \cdot  \ugen (x)  \D \rmeas(x)  + \alpha \cdot \Ent\left( (\diff_{n_k})_{\#} \rmeas \right)  \\
   & \ge 
   \frac{1}{2} \int \left(\diff^*(x) - \mathrm{Id}\right)^2 
   - \diff^*(x) \cdot  \ugen (x)  \D \rmeas(x) + \alpha \cdot \Ent\left( (\diff^*)_{\#} \rmeas \right).
  \end{align}
  By definition of the infimum, we have
  \begin{equation}
    \gamma \le 
   \int_a^b \frac{1}{2} \left(\diff^*(x) - \mathrm{Id} \right)^2 
   - \diff^*(x) \cdot \ugen (x)  \D \rmeas(x)
    + \alpha \cdot \Ent\left(\left(\diff^*\right)_{\#} \rmeas\right) 
  \end{equation}
  and thus
  \begin{equation}
    \gamma 
    = 
   \frac{1}{2} \int \limits_a^b \left(\diff^*(x) - \mathrm{Id} \right)^2 
   - \diff^*(x) \cdot  \ugen (x)  \D \rmeas(x)
   + \alpha \cdot \Ent \left( \diff^*_{\#} \rmeas \right).
  \end{equation}
  Hence, the existence result follows. 
  The uniqueness follows from \cref{thm:uniqueness_minimizers}.
\end{proof}
\begin{lemma}\label[lemma]{le:injective_minimizer}
  Suppose that $\ugen \in L^1(\rmeas)$, $\rmeas \ll \mathscr{L}$, $\alpha > 0$, $\Ent(\rmeas) < \infty$ and $\frac{\D \rmeas}{\D \Leb} > 0$.
  For $\diff^*$ minimizer of $F_{\alpha}^{\ugen, \rmeas}$, 
  \begin{enumerate}[label=(\roman*)]
    \item We have $\left(\diff^*\right)_{\#} \rmeas \ll \Leb$.
     We write its Radon-Nikodym derivative as $\rho \defeq \frac{\D \diff^*_{\#} \rmeas}{\D x}$
      \label{it:pushforward_is_ac}
    \item $\diff^*$ is almost everywhere differentiable.\label{it:derivative_exists_ae}
    \item $\diff^*$ is injective and the density $\rho$ of $\left(\diff^*\right)_{\#} \rmeas$ with respect to $\Leb$ satisfies a change of variable formula, 
    i.e., $\partial_x \diff^*(x) \cdot \rho \circ \diff^*(x) = \frac{\D \rmeas}{\D \Leb} $ a.e. on $[a, b]$ for $\partial_x \diff^*$ the a.e. derivative of $\diff^*$. 
    Furthermore, $\partial_x \diff^* > 0$ a.e. and we have $\rho \geq 0$ and $\rho(x) > 0$ (only) on $x \in \diff^*([a, b])$. 
    \label{it:change_of_vars}
  \end{enumerate}
  The same result holds for convex combinations of minimizers of $F^{\ugen, \rmeas}_{\alpha}$ for different $\ugen \in L^1(\rmeas)$.
\end{lemma}
\begin{proof}
    Let $\diff^* \in \helly$ be any minimizer of $F_{\alpha}^{\ugen, \rmeas}$. 
    To establish (i), observe that $F_{\alpha}^{\ugen, \rmeas}(\diff^*) \leq F_{\alpha}^{\ugen, \rmeas}(\mathrm{Id}) < \infty$ and the pushforward measure $\diff^*_{\#}\rmeas$ is supported on $\diff^* ([a, b])$.
    Since $F_{\alpha}^{\ugen, \rmeas}(\diff^*)$ would otherwise be infinite, $\diff^*_{\#}\rmeas$ is absolutely continuous with respect to the Lebesgue measure.
    
    To establish (ii), observe that $\diff^*$ is monotone and hence differentiable almost everywhere \cite[Theorem 5.4.2]{heil2019introduction}.

    To establish (iii), we first claim that $\diff^*$ is strictly increasing. 
    Toward a contradiction, suppose that there exist $a \le x_1 < x_2 \le b$
    such that $\diff^*(x_1) = \diff^* (x_2)$.
    Then, since $(\diff^*)_{\#} \rmeas = \rho \D x$, we see that 
    \begin{equation}
      \int \limits_{[x_1, x_2]} \mu(x) \D x 
      = \rmeas [x_1, x_2] 
      = \int \limits_{[\diff^*(x_1), \diff^*(x_2)]}\rho(x) \D x 
      = 0,
    \end{equation}
    contradicting the fact that $\mu(I) > 0$ for all $I \subseteq [a, b]$ of positive length.
    Therefore, $\diff^*$ is strictly increasing and injective.
    Finally, as $\diff^*$ has bounded variation and is injective on $[a, b]$, we may apply \cite[Theorem 11.1]{villani2009optimal} to conclude that $\partial_x \diff^*(x) \cdot \rho \circ \diff^*(x) = \frac{\D \rmeas}{\D \Leb} $ a.e. on $[a, b]$.
    By that same theorem, $\left(\diff^*\right)_{\#} \rmeas \ll \Leb$ implies that $\partial_x \diff^* > 0$ a.e.
    By the same theorem, the Borel function $\rho \colon [a, b] \to \R$ is non-negative, positive only on $\diff^*([a, b])$, 
    satisfying 
    $\D (\diff^*)_{\#} \rmeas = \rho(x)  \D x$.

    We now extend the result to the strict convex combination $\diff = (1 - s)\, \diff^{v_1} + s\, \diff^{v_2}, s \in (0, 1)$ of two minimizers $\diff^{v_1}, \diff^{v_2}$ of $F_{\alpha}^{v_1, \rmeas}$ and $F_{\alpha}^{v_2, \rmeas}$, respectively. 
    The membership in $\Helly$, strict monotonicity (hence, injectivity), and a.e. differentiability of part \ref{it:derivative_exists_ae} follow directly. 
    It remains to show that $\diff_{\#}\rmeas \ll \Leb$.

    For $a \le x < y \le b$, monotonicity of $\diff^{v_2}$ gives
    \begin{equation}
      \diff(y) - \diff(x)
      = (1 - s)\big(\diff^{v_1}(y) - \diff^{v_1}(x)\big) + s\big(\diff^{v_2}(y) - \diff^{v_2}(x)\big)
      \ge (1 - s)\big(\diff^{v_1}(y) - \diff^{v_1}(x)\big).
    \end{equation}
    Since $\diff$ is injective, $g \defeq \diff^{v_1} \circ \diff^{-1}$ is well defined on $\diff([a, b])$, and the bound above reads
    $0 \le g(\diff(y)) - g(\diff(x)) = \diff^{v_1}(y) - \diff^{v_1}(x) \le \tfrac{1}{1 - s}\big(\diff(y) - \diff(x)\big)$;
    that is, $g$ is $\tfrac{1}{1 - s}$-Lipschitz.
    Consequently, for any Lebesgue-null set $N$, the set $\diff^{v_1}\big(\diff^{-1}(N)\big) = g\big(N \cap \diff([a, b])\big)$ is Lebesgue-null.
    Injectivity of $\diff^{v_1}$ and $\diff^{v_1}_{\#} \rmeas \ll \Leb$ yields $\diff_{\#} \rmeas(N) = \rmeas\big(\diff^{-1}(N)\big) = \diff^{v_1}_{\#} \rmeas\big(\diff^{v_1}(\diff^{-1}(N))\big) = 0$ and hence  $\diff_{\#} \rmeas \ll \Leb$.
    Together with strict monotonicity and bounded variation, \cite[Theorem 11.1]{villani2009optimal} then yields the change-of-variables formula (iii) as above.
\end{proof}
\subsection{Stability of minimizers, existence of variational solutions}
\label{sec:stability_minimizers}
In order to prove the uniqueness and stability of minimizers, we will use the following lemma.
\begin{lemma}
  \label[lemma]{lem:lipschitz_cont_solution_map}
  Let $\mathcal{H}$ be a Hilbert space and let $F\colon \mathcal{H} \to \overline{\R} \coloneqq \R \cup \{\infty\}$ with $F(\mathcal{H}) \not \subset \{\infty\}$. 
  Let $u, v \in \mathcal{H}$ and, for $w \in \mathcal{H}$, $F^w(\diff) \coloneqq F(\diff) + \langle w, \diff \rangle_{\mathcal{H}}$ and $\diff^w \in \argmin_{\diff \in \mathcal{H}} F^w(\diff)$. 
  Let $F$ be lower semicontinuous, and $\gamma$-strongly convex on the convex hull of $\diff^u$ and $\diff^v$, the latter meaning that $\diff \mapsto F(\diff) - \frac{\gamma}{2} \langle \diff, \diff\rangle_{\mathcal{H}}$ is convex.
  Then, $u \mapsto \diff^u \coloneqq \argmin_{\diff \in \mathcal{H}} F(\diff) + \langle u, \diff \rangle_{\mathcal{H}}$ is single-valued and $\gamma^{-1}$-Lipschitz continuous.
\end{lemma}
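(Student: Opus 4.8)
This is the classical fact that the minimizer of a strongly convex functional is a Lipschitz function of an additive linear perturbation; the only complication is that $F$ is merely lower semicontinuous, not differentiable, so I cannot use a first-order condition such as $\nabla F(\diff^w) = -w$ and must instead exploit the defining inequality of strong convexity along line segments. Writing $G_w \coloneqq F + \langle w, \cdot\rangle_{\mathcal{H}}$, the plan is: (1) derive a quadratic growth estimate $G_w(\diff) \ge G_w(\diff^w) + \tfrac{\gamma}{2}\norm{\diff - \diff^w}^2$ for all $\diff$ on the relevant segment; (2) apply it in the two ``cross'' configurations $(w,\diff) = (u,\diff^v)$ and $(w,\diff) = (v,\diff^u)$; (3) add the two inequalities, observe that the $F$-terms cancel, and close with Cauchy--Schwarz. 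Single-valuedness then comes out as the case $u = v$. (Only strong convexity and the minimizing property of $\diff^w$ enter the estimate; the attainment of the infimum defining $\diff^w$ is already assumed in the statement.)

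For step (1), I would fix $w$ and set $H \coloneqq F - \tfrac{\gamma}{2}\langle \cdot, \cdot\rangle_{\mathcal{H}}$, which is convex on the convex hull of $\diff^u$ and $\diff^v$ by hypothesis. For $\diff$ in that hull and $t \in (0,1]$, combining the convexity inequality for $H$ at $(1-t)\diff^w + t\diff$ with the identity $\tfrac{\gamma}{2}\norm{(1-t)x + ty}^2 = (1-t)\tfrac{\gamma}{2}\norm{x}^2 + t\tfrac{\gamma}{2}\norm{y}^2 - \tfrac{\gamma}{2}t(1-t)\norm{x - y}^2$ and the linearity of $\langle w, \cdot\rangle_{\mathcal{H}}$ gives
\[ G_w\bigl((1-t)\diff^w + t\diff\bigr) \le (1-t)\,G_w(\diff^w) + t\,G_w(\diff) - \tfrac{\gamma}{2}\,t(1-t)\,\norm{\diff - \diff^w}^2. \]
Since $\diff^w$ minimizes $G_w$, the left-hand side is at least $G_w(\diff^w)$; subtracting $(1-t)G_w(\diff^w)$, dividing by $t$, and letting $t \downarrow 0$ yields the quadratic growth estimate. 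This is where the lack of differentiability of $F$ is sidestepped: no subdifferential calculus is needed, just the segment inequality and a limit.

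For steps (2)--(3), I would write the estimate once as $G_u(\diff^v) \ge G_u(\diff^u) + \tfrac{\gamma}{2}\norm{\diff^u - \diff^v}^2$ and once as $G_v(\diff^u) \ge G_v(\diff^v) + \tfrac{\gamma}{2}\norm{\diff^u - \diff^v}^2$ --- both legitimate since the segment $[\diff^u, \diff^v]$ lies in the convex hull on which $F$ is strongly convex --- and add them. The $F(\diff^u)$ and $F(\diff^v)$ terms cancel, leaving $\langle u, \diff^v\rangle + \langle v, \diff^u\rangle \ge \langle u, \diff^u\rangle + \langle v, \diff^v\rangle + \gamma\norm{\diff^u - \diff^v}^2$, that is,
\[ \gamma\,\norm{\diff^u - \diff^v}^2 \le \langle u - v, \diff^v - \diff^u\rangle_{\mathcal{H}} \le \norm{u - v}\,\norm{\diff^u - \diff^v}, \]
so $\norm{\diff^u - \diff^v} \le \gamma^{-1}\norm{u - v}$, the asserted Lipschitz continuity of $u \mapsto \diff^u$. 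Taking $u = v$ --- so that $\diff^u, \diff^v$ now denote two a priori distinct minimizers of one and the same functional, their convex hull being simply the segment joining them --- forces $\gamma\norm{\diff^u - \diff^v}^2 \le 0$, hence that minimizer is unique and $u \mapsto \diff^u$ is well-defined.

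\textbf{Main obstacle.} There is no genuine difficulty in this lemma; the one point that requires attention is purely organizational, namely that strong convexity is assumed only on the convex hull of $\diff^u$ and $\diff^v$, so one must verify that every line segment invoked --- both cross configurations in step (2), and the segment between the two candidate minimizers in the uniqueness argument --- lies inside that hull. Each is exactly the segment $[\diff^u, \diff^v]$, so this holds automatically, but it should be recorded rather than glossed over.
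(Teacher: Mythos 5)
Your proof is correct, and it derives the sharp bound $\norm{\diff^u - \diff^v}_{\mathcal{H}} \le \gamma^{-1}\norm{u - v}_{\mathcal{H}}$; the lemma's statement says ``Lipschitz constant $\gamma$'', but $\gamma^{-1}$ is what holds and what the paper in fact uses later (with $\gamma = 1$). The paper's own proof reaches the same quadratic-growth estimate by a longer route: it restricts $G^w \coloneqq F + \langle w, \cdot\rangle_{\mathcal{H}}$ to the segment $[\diff^u, \diff^v]$ to get one-dimensional convex functions, invokes results from \cite{ekeland1999convex} for interior continuity and a subgradient identity after peeling off the explicit quadratic term, and then closes with a \emph{one-sided} comparison of $G^u$ against $G^v$ via the optimality of $\diff^v$ alone --- a step which, if written out in full, yields only $\norm{\diff^u - \diff^v} \le 2\norm{u - v}$. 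You instead obtain quadratic growth directly from the convexity of $F - \tfrac{\gamma}{2}\norm{\cdot}^2$ along the segment together with minimality of $\diff^w$, replacing subgradient calculus by the elementary limit $t \downarrow 0$, and then close \emph{symmetrically} by summing the two cross inequalities so the $F$-terms cancel --- the standard cocoercivity step, which is precisely what gives the sharp constant. A further dividend is that your argument never uses lower semicontinuity of $F$ at all: that hypothesis serves only to guarantee that a minimizer exists, which the statement already presumes, so the Ekeland--Temam machinery for continuity and subgradients is avoidable. Your closing remark --- that every segment invoked is exactly $[\diff^u, \diff^v]$, hence inside the region where strong convexity is assumed --- is the right bookkeeping detail to record.
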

\begin{proof}
  Under sufficient regularity, this follows by the classic result that strong convexity of $F$ implies Lipschitz continuity of gradients of its convex conjugate.
  We prove the analog of this result in the case of possibly nondifferentiable $F$.
  For $u, v \in \mathcal{H}$ the associated minimizers $\diff^u, \diff^v$ satisfy $F^u(\diff^u), F^v(\diff^v) < \infty$ by optimality and $F(\mathcal{H}) \not \subset \{\infty\}$ and therefore also $F(\diff^u), F(\diff^v), F^v(\diff^u), F^u(\diff^v) < \infty$. 
  For $s \in [0, 1]$, the map  $s \mapsto f(s) = F \left(\diff^u + s \left(\diff^v - \diff^u\right)\right)$ is convex, lower semicontinuous with finite values at $s \in \{0, 1\}$. 
  Thus, by \cite[Chapter I, Lemma 2.1]{ekeland1999convex}, it is continuous in $(0, 1)$. 
  \cite[Chapter I, Proposition 5.6]{ekeland1999convex} and optimality of $\diff^u,\diff^v$ imply $\langle-u, \diff^v - \diff^u\rangle_{\mathcal{H}} \in \partial f(0)$ and $\langle-v, \diff^v - \diff^u\rangle_{\mathcal{H}} \in \partial f(1)$.  
  By $\gamma$-strong convexity of $F$, the function $f$ is $\gamma \left\|\diff^u - \diff^v\right\|^2_{\mathcal{H}}$ strong convex, thus
  \begin{align}
    f(0) &\geq f(1) - \langle -v, \diff^v - \diff^u \rangle_{\mathcal{H}} + \frac{\gamma}{2}\left\|\diff^u - \diff^v\right\|_{\mathcal{H}}^2, \\
    f(1) &\geq f(0) + \langle -u, \diff^v - \diff^u \rangle_{\mathcal{H}} + \frac{\gamma}{2}\left\|\diff^v - \diff^u\right\|_{\mathcal{H}}^2.
  \end{align}
  Adding these two inequalities:
  \begin{equation}
    0 \geq - \langle -v, \diff^v - \diff^u \rangle_{\mathcal{H}} + \langle -u, \diff^v - \diff^u \rangle_{\mathcal{H}} + \gamma\left\|\diff^u - \diff^v\right\|_{\mathcal{H}}^2
    = \langle v - u, \diff^v - \diff^u \rangle_{\mathcal{H}} + \gamma\left\|\diff^u - \diff^v\right\|_{\mathcal{H}}^2.
  \end{equation}
  Thus, $\gamma\left\|\diff^u - \diff^v\right\|_{\mathcal{H}}^2 \leq - \langle v - u, \diff^v - \diff^u \rangle_{\mathcal{H}} \leq \left\|u - v\right\|_{\mathcal{H}} \left\|\diff^u - \diff^v\right\|_{\mathcal{H}}$,
  yielding $\left\|\diff^u - \diff^v\right\|_{\mathcal{H}} \leq \gamma^{-1} \left\|u - v\right\|_{\mathcal{H}}$.
\end{proof}
\begin{theorem}[Uniqueness and stability of minimizers]
  \label{thm:uniqueness_minimizers}
  Let $u, v \in L^1(\rmeas)$ with $(u - v) \in L^2(\rmeas)$, $\rmeas \ll \mathscr{L}$ with a.e. positive density and $\Ent(\rmeas) < \infty$. 
  Let $\diff^u, \diff^v$ be minimizers of $F_{\alpha}^{u, \rmeas}$ and $F_{\alpha}^{v, \rmeas}$, respectively. 
  Then,
  \begin{equation}
    \left\| \diff^u - \diff^v \right\|_{L^2(\rmeas)} \leq \left\| u - v \right\|_{L^2(\rmeas)}.
  \end{equation}
  In particular, minimizers of $F_{\alpha}^{u, \rmeas}$ are unique.
\end{theorem}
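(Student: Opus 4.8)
The statement will follow from \cref{lem:lipschitz_cont_solution_map} applied with the Hilbert space $\mathcal{H} = L^2(\rmeas)$, once the defining functional is cast in the required form. Concretely: every $\diff \in \helly$ is bounded by $[a,b]$, so $\helly \subseteq L^\infty(\rmeas) \subseteq L^2(\rmeas) = \mathcal{H}$, $\helly$ is a convex subset of $\mathcal{H}$, and each linear functional $\diff \mapsto \int_a^b \diff\, w\, \D\rmeas$ with $w \in L^1(\rmeas)$ is finite on $\helly$. Restricted to $\helly$ and set to $+\infty$ elsewhere, $F_\alpha^{v,\rmeas}$ is lower semicontinuous on $\mathcal{H}$ — this is the lower semicontinuity established for \cref{thm:minimizer_existence}, transported to $L^2(\rmeas)$ because, on the bounded set $\helly$, $L^2(\rmeas)$-convergence passes along subsequences to $\rmeas$-a.e. convergence, hence to weak convergence of pushforwards and to convergence of the quadratic and linear parts by dominated convergence, so the Fatou and weak-lower-semicontinuity argument of \cref{thm:minimizer_existence} applies — and it is $1$-strongly convex for $\|\cdot\|_{L^2(\rmeas)}$, this being exactly the property already used for $F_\alpha^{w,\rmeas}$ in the proof of \cref{lem:lipschitz_cont_solution_map}, coming from the $\int_a^b \frac{(\diff - x)^2}{2}\D\rmeas = \frac{1}{2}\|\diff - \mathrm{Id}\|_{L^2(\rmeas)}^2$ term together with the convexity of the entropic contribution.

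Next I would dispose of the case $\|u - v\|_{L^2(\rmeas)} = +\infty$, in which the estimate is vacuous, and otherwise set $g \coloneqq u - v \in L^2(\rmeas)$. Since every $\diff \in \helly$ is bounded, $F_\alpha^{u,\rmeas}(\diff) = F_\alpha^{v,\rmeas}(\diff) - \int_a^b \diff\, g\, \D\rmeas = F_\alpha^{v,\rmeas}(\diff) - \langle g, \diff\rangle_{L^2(\rmeas)}$ for $\diff \in \helly$, and both sides equal $+\infty$ otherwise; thus $\diff^v$ minimizes $F_\alpha^{v,\rmeas}$ while $\diff^u$ minimizes $\diff \mapsto F_\alpha^{v,\rmeas}(\diff) + \langle -g, \diff\rangle_{L^2(\rmeas)}$. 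Applying \cref{lem:lipschitz_cont_solution_map} with base functional $F_\alpha^{v,\rmeas}$, strong-convexity constant $\gamma = 1$, and perturbation vectors $0$ and $-g$ gives that the solution map is single-valued and $\|\diff^u - \diff^v\|_{L^2(\rmeas)} \le \|{-g} - 0\|_{L^2(\rmeas)} = \|u - v\|_{L^2(\rmeas)}$, which is the asserted estimate. Finally, taking $v = u$ forces any two minimizers of $F_\alpha^{u,\rmeas}$ to agree $\rmeas$-almost everywhere; when $\frac{\D\rmeas}{\D\Leb} > 0$ the $\rmeas$-null and $\Leb$-null sets coincide, which is the claimed uniqueness.

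I expect the content to lie entirely in this bookkeeping, not in any new idea. The one point that needs care is that $u$ and $v$ are only assumed to be in $L^1(\rmeas)$, so $F_\alpha^{u,\rmeas}$ and $F_\alpha^{v,\rmeas}$ are not literally of the form ``a fixed functional plus an $L^2(\rmeas)$ inner product''; freezing $F_\alpha^{v,\rmeas}$ as the base functional and perturbing only by the square-integrable increment $g = u - v$ — together with the observation that the remaining case $g \notin L^2(\rmeas)$ carries no content — is precisely what reconciles the concrete functional with the abstract hypotheses of \cref{lem:lipschitz_cont_solution_map}. The secondary verifications ($L^2(\rmeas)$-lower semicontinuity, $1$-strong convexity, and finiteness of the linear terms on $\helly$) are all immediate from ingredients already in place in \cref{thm:minimizer_existence} and in the proof of \cref{lem:lipschitz_cont_solution_map}.
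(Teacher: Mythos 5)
Your overall architecture matches the paper's: reduce to \cref{lem:lipschitz_cont_solution_map} with $\mathcal{H} = L^2(\rmeas)$, dispose of the case $\|u-v\|_{L^2(\rmeas)} = \infty$, and apply the lemma with base functional $F_\alpha^{v,\rmeas}$ and perturbation $g = u-v$. That bookkeeping is sound. The gap is in the clause you wave through as secondary, namely ``together with the convexity of the entropic contribution.'' This is not immediate and is in fact the substantive part of the paper's proof.

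The issue is that $\diff \mapsto \diff_{\#}\rmeas$ is far from affine: the pushforward under a convex combination $(1-s)\diff^u + s\diff^v$ is not the corresponding convex combination of the two pushforward measures. So convexity of $\KL(\cdot\,\|\,\rmeas)$ in its first argument tells you nothing about convexity of $\diff \mapsto \alpha\,\KL(\diff_{\#}\rmeas\,\|\,\rmeas)$, and the latter has to be established by hand. The paper does this by restricting to the segment between $\diff^u$ and $\diff^v$ (which is all that \cref{lem:lipschitz_cont_solution_map} requires) and proving there the identity
\begin{equation}
  \alpha\,\KL\left(\diff_{\#}\rmeas\,\big\|\,\rmeas\right) \;=\; \alpha\int_a^b -\log\left(\aederiv_x\diff\right)\,\D\rmeas(x),
\end{equation}
after which convexity in $\diff$ follows from convexity of $-\log$ and linearity of the a.e. derivative. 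But that identity is not automatic even when $\diff_{\#}\rmeas \ll \rmeas$: a monotone $\diff$ can have a singular part and its a.e. derivative can lose mass, so one needs more than monotonicity. The paper supplies this via \cref{lem:injective_minimizer} (minimizers are strictly increasing, and convex combinations of strictly increasing maps remain strictly increasing) together with the change-of-variables theorem for injective BV maps, \cite[Theorem 11.1]{villani2009optimal}, to get $\rho\circ\diff = 1/\aederiv_x\diff$ a.e. Your proof needs to supply this argument, or at least cite the injectivity lemma and the Villani theorem and note that strong convexity is only being claimed on the segment, where injectivity holds. Asserting the convexity as already in place is the missing step.
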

\begin{proof}
  The result is vacuously true if $\|u - v\|_{L^2(\rmeas)} = \infty$, we thus assume it to be finite. 
  We first show that for all $\diff$ in the convex hull of $\diff^u$ and $\diff^v$
  \begin{equation}
    \alpha \Ent \left( \diff_{\#} \rmeas \right) 
    = \alpha \int \limits_a^b - \log (\aederiv_x \diff) \D \rmeas(x) + \alpha \Ent\left(\rmeas\right). 
  \end{equation}
  Here, $\aederiv_x \diff$ denotes the derivative on a set of full measure on $[a, b]$.
  It exists by monotonicity of $\diff$, even though $\diff$ may not be absolutely continuous or weakly differentiable. 
  In particular, this implies convexity of $\alpha \Ent\left((\cdot)_{\#} \rmeas \right)$ on the convex hull of $\diff^u$ and $\diff^v$.

  The case $\alpha = 0$ is trivial, so we assume $\alpha > 0$.
 Using the definition of the pushforward measure and the Change of Variable Theorem \cite[Theorem 3.6.1]{bogachev2007measure}, we have
  \begin{align}
    \alpha \Ent \left( \diff_{\#} \rmeas \right) 
    = \alpha \int \limits_{\diff([a, b])} \log \rho(x) \D \diff_{\#} \rmeas
    = \alpha  \int \limits_{[a, b]}  \log \rho \circ \diff (x) \D \rmeas(x),
  \end{align}
  where $\rho$ is defined as in \cref{le:injective_minimizer}. Now, by \cref{le:injective_minimizer} applied to $\diff$,
 we have a.e.
 \begin{align}
   \aederiv_x \diff(x) \rho \circ \diff(x) 
   &= \dd \rmeas/\dd \Leb(x).
 \end{align}
 By \cref{le:injective_minimizer}, we have $\aederiv_x \diff > 0$ a.e. for any element $\diff$ of the convex hull of $\diff^u, \diff^v$, and can write
 \begin{equation}
   \rho \circ \diff(x) 
   = \frac{\dd \rmeas/\dd \Leb(x)}{\aederiv_x \diff(x)} 
 \end{equation}
 a.e. on $[a, b]$ and hence
  \begin{equation}
    \alpha \int \limits_{a}^{b} \log \rho \circ \diff(x) \D \rmeas(x) 
    = 
    \alpha \int\limits_{a}^{b} - \log \aederiv_x \diff \D \rmeas(x) + \alpha \Ent\left(\rmeas\right).
  \end{equation}
  Thus, we see that 
  \begin{align}
    \alpha \Ent \left( \diff_{\#} \rmeas \right) 
    = \alpha \int \limits_a^b - \log (\aederiv_x \diff) \D \rmeas(x) + \alpha \Ent\left(\rmeas\right).
  \end{align}
  Hence, the map 
  \begin{equation}
    \diff \mapsto \int_{a}^{b} \frac{(\diff(x) - x)^2}{2} \D \rmeas + \alpha \Ent \left( \diff_{\#} \rmeas \right)
    - \frac{1}{2} \int \limits_a^b \diff(x)^2 \D \rmeas
  \end{equation}
  is convex, implying also that $\diff \mapsto \int_{a}^{b} \frac{(\diff(x) - x)^2}{2} \D \rmeas + \alpha \Ent \left( \diff_{\#} \rmeas \right)$ 1-strongly convex on the convex hull of $\diff^u, \diff^v$ with respect to $L^2(\rmeas)$.
   The functional $\diff \mapsto \int_{a}^{b} \frac{(\diff(x) - x)^2}{2} \D \rmeas + \alpha \Ent \left( \diff_{\#} \rmeas \right)$ is also lower-semicontinuous. 
   The result then follows from \cref{lem:lipschitz_cont_solution_map}, applied with the Hilbert space $L^2(\rmeas)$ and the functional $F^w \coloneqq F_\alpha^{u+w,\rmeas}$, which takes values in $\R\cup\{\infty\}$ and whose minimizer depends Lipschitz-continuously on $w \in L^{2}(\rmeas)$.
\end{proof}
\begin{theorem}[Well-posedness of variational solutions (\cref{def:variational_solution})]
  For a fixed probability measure $\rmeas \ll \mathscr{L}$ with $0 < \essinf \frac{\D \rmeas}{\D \Leb}, \esssup \frac{\D \rmeas}{\D \Leb} < \infty$, let $ \uvel_0 \frac{\D \rmeas}{\D \Leb} - \alpha \partial_x\big( \left(\partial_x u_0\right) \frac{\D \rmeas}{\D \Leb}\big)$ [or $\uvel_0 \frac{\D \rmeas}{\D \Leb}$] be in $L^2$ and $\partial_x \left(\frac{\D \rmeas}{\D \Leb}\right)$ in $L^1$. 
  Then, there exists a unique [regularized] variational solution with initial velocity $u_0$. 
  The map $u_0 \frac{\D \rmeas}{\D \Leb} - \alpha \partial_x\big( \partial_x (u_0) \frac{\D \rmeas}{\D \Leb}\big) \mapsto \diff_t$  [respectively $u_0  \mapsto \diff_t$] is continuous with respect to the $L^2$ topology.
\end{theorem}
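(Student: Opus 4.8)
The plan is to reduce both assertions to the single‑time results \cref{thm:minimizer_existence} and \cref{thm:uniqueness_minimizers} and then to assemble the time‑parametrized family, so that the only genuinely new work is the $t$‑regularity of $t \mapsto \diff_t$.

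First I would put the driving linear functional into the canonical ``integration of $\diff$ against an $L^1(\rmeas)$ function'' form. In the regularized case it already has this form, $\diff \mapsto \int_a^b \diff\, u_0 \,\D\rmeas$ with $u_0 \in L^1(\rmeas)$ by hypothesis. In the non-regularized case I would integrate by parts in the $\alpha$-term of $f_{\alpha}^{u_0}$, using $\diff(a)=a$, $\diff(b)=b$ together with the no-penetration boundary conditions to discard the boundary terms, obtaining $f_{\alpha}^{u_0}(\diff) = \int_a^b \diff\cdot g \,\D\Leb$ with $g \coloneqq u_0 \frac{\D\rmeas}{\D\Leb} - \alpha\,\partial_x\!\big(\partial_x u_0\,\frac{\D\rmeas}{\D\Leb}\big) \in L^1(\Leb)$; since $\rmeas$ and $\Leb$ are mutually absolutely continuous, $v \coloneqq g/\frac{\D\rmeas}{\D\Leb}$ is well defined, $f_{\alpha}^{u_0} = \langle v,\cdot\rangle_{L^2(\rmeas)}$, and $\|v\|_{L^1(\rmeas)} = \|g\|_{L^1(\Leb)} < \infty$. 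In both cases $F_{\alpha}^{t f_{\alpha}^{u_0},\rmeas} = F_{\alpha}^{t v,\rmeas}$ for a fixed $v \in L^1(\rmeas)$, so \cref{thm:minimizer_existence} produces, for each $t\ge 0$, a minimizer $\diff_t \in \helly$, and \cref{thm:uniqueness_minimizers} (applied with $u = t v$) shows $\diff_t$ is unique up to $\rmeas$-a.e.\ equality. Setting $\rho_t \coloneqq (\diff_t)_{\#}\rmeas$ and defining $u$ by the formula in \cref{def:variational_solution}, the lemma following that definition gives $u(\cdot,t) \in L^2(\rho_t)$ and independence of the chosen representative of $\dot\diff_t$; evaluating at $t=0$, where $\mathrm{Id}$ is the unique minimizer of $F_{\alpha}^{0,\rmeas}$, identifies the realized initial velocity as $\bar u_0$ (namely $u_0$, respectively the solution $u_\alpha$ of the elliptic smoothing problem $f_{\alpha}^{u_\alpha} = u_0$).

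The remaining and most delicate point is the weak differentiability of $t\mapsto\diff_t$ into $L^2(\rmeas)$ demanded by \cref{def:variational_solution}. Here I would invoke \cref{thm:uniqueness_minimizers} a second time, now comparing the minimizers at two times: with $u = t_1 v$ and $v = t_2 v$ it gives $\|\diff_{t_1}-\diff_{t_2}\|_{L^2(\rmeas)} \le |t_1-t_2|\,\|v\|_{L^2(\rmeas)}$. When the realized initial velocity lies in $L^2(\rmeas)$ — automatic for regularized solutions, since $u_\alpha$ solves $u_\alpha\frac{\D\rmeas}{\D\Leb} - \alpha\,\partial_x(\partial_x u_\alpha\frac{\D\rmeas}{\D\Leb}) = u_0\frac{\D\rmeas}{\D\Leb} \in L^1$ and hence, by one-dimensional elliptic regularity and Sobolev embedding, lies in $W^{1,\infty} \subset L^2(\rmeas)$ — this makes $t\mapsto\diff_t$ Lipschitz into the Hilbert space $L^2(\rmeas)$, hence absolutely continuous and, by reflexivity, differentiable for a.e.\ $t$, which is the required weak differentiability. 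For the borderline non-regularized case with only $v \in L^1(\rmeas)$, the strong-convexity inequality underlying \cref{lem:lipschitz_cont_solution_map} still yields $\|\diff_{t_1}-\diff_{t_2}\|_{L^2(\rmeas)}^2 \le (t_2-t_1)\big(\langle v,\diff_{t_2}\rangle_{L^2(\rmeas)} - \langle v,\diff_{t_1}\rangle_{L^2(\rmeas)}\big)$ for $t_1 < t_2$, with $t\mapsto\langle v,\diff_t\rangle_{L^2(\rmeas)}$ bounded and nondecreasing; a partition argument then gives $t\mapsto\diff_t \in C^{0,1/2}\cap\mathrm{BV}([0,T];L^2(\rmeas))$, and promoting this to full weak differentiability is exactly where the stability-in-the-initial-data analysis of \cref{sec:eulerian} enters. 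This $t$-regularity step is the main obstacle; everything else is a bookkeeping assembly.

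Finally, the continuity claim is a direct reading of \cref{thm:uniqueness_minimizers}: for two data with realized velocities $v^{(1)},v^{(2)}$ one has $\|\diff_t^{v^{(1)}}-\diff_t^{v^{(2)}}\|_{L^2(\rmeas)} \le t\,\|v^{(1)}-v^{(2)}\|_{L^2(\rmeas)}$, so $u_0 \mapsto \diff_t$ is $t$-Lipschitz from $L^2(\rmeas)$ to $L^2(\rmeas)$ in the regularized case, while in the non-regularized case the change of variables $\|v^{(1)}-v^{(2)}\|_{L^2(\rmeas)}^2 = \int_a^b (g^{(1)}-g^{(2)})^2 / \frac{\D\rmeas}{\D\Leb}\,\D\Leb$ turns this into continuity of $g \mapsto \diff_t$ in the relevant weighted $L^2$ topology. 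Uniqueness of the variational solution then follows from the $\rmeas$-a.e.\ uniqueness of each $\diff_t$, since by the lemma after \cref{def:variational_solution} both $\rho_t$ and $u(\cdot,t)$ depend only on the $\rmeas$-equivalence classes of $\diff_t$ and $\dot\diff_t$.
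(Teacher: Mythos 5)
Your approach matches the paper's almost exactly: existence of each $\diff_t$ from \cref{thm:minimizer_existence}, uniqueness and Lipschitz stability from \cref{thm:uniqueness_minimizers}, and weak $t$-differentiability of $t \mapsto \diff_t$ deduced from its Lipschitz continuity into $L^2(\rmeas)$. The paper's own proof is three sentences long and does precisely this.

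Where you genuinely add something is in flagging a subtlety the paper glosses over. \Cref{thm:uniqueness_minimizers} gives $\|\diff^{u}-\diff^{v}\|_{L^2(\rmeas)} \le \|u-v\|_{L^2(\rmeas)}$, which is vacuous unless the linear-functional data lies in $L^2(\rmeas)$, yet the theorem only assumes $u_0\frac{\D\rmeas}{\D\Leb} - \alpha\partial_x\big(\partial_x u_0\,\frac{\D\rmeas}{\D\Leb}\big)$ (resp.\ $u_0\frac{\D\rmeas}{\D\Leb}$) lies in $L^1$, and the density $\frac{\D\rmeas}{\D\Leb}$ is not assumed bounded away from $0$ or $\infty$ here, so dividing by it need not preserve integrability class. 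The paper asserts Lipschitz continuity of $t \mapsto \diff_t$ without addressing this; the claimed continuity ``with respect to the $L^2$ topology'' is only meaningful once the relevant $L^2$ norm is finite. Your strong-convexity estimate
\begin{equation}
  \|\diff_{t_1}-\diff_{t_2}\|_{L^2(\rmeas)}^2 \le (t_2-t_1)\big(\langle v,\diff_{t_2}\rangle_{L^2(\rmeas)} - \langle v,\diff_{t_1}\rangle_{L^2(\rmeas)}\big), \qquad t_1 < t_2,
\end{equation}
together with the monotonicity and boundedness of $t \mapsto \langle v,\diff_t\rangle_{L^2(\rmeas)}$, correctly yields $C^{0,1/2}\cap\mathrm{BV}$ regularity of $t\mapsto\diff_t$ under the stated $L^1$ hypothesis alone, which is more than the paper establishes; and your candid remark that upgrading this to full weak differentiability is where the real work lies is the right instinct. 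Two minor points: your one-dimensional elliptic-regularity argument for $u_\alpha$ presupposes density bounds that this theorem does not assume (they are imposed only from \cref{sec:weak_solutions} onward), so you should state it as a sufficient condition rather than an automatic consequence; and when identifying the realized initial velocity at $t=0$, note that $\mathrm{Id}$ minimizing $F_\alpha^{0,\rmeas}$ only pins down $\diff_0$, while $\dot\diff_0 = u_\alpha$ (resp.\ the elliptic solve for the regularized case) really requires the $t$-differentiability you are in the middle of establishing.
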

\begin{proof}
  The existence of minimizers of the functionals $F_{\alpha}^{f_{\alpha}^{t u_0}, \rmeas}$ in \cref{def:variational_solution} is guaranteed by \cref{thm:minimizer_existence}.
  By \cref{thm:uniqueness_minimizers}, the minimizers are unique and $L^2$-Lipschitz continuous with respect to $f^{t u_0}_{\alpha}$, establishing Lipschitz continuity of the input-solution map. 
  Since Lipschitz continuous maps are weakly differentiable, we can thus define the Eulerian solution as in \cref{def:variational_solution}.
\end{proof}

\section{Convergence of variational solutions}
\label{sec:convergence_variational}
\subsection*{Overview}
This section studies the convergence of solutions of the regularized problem to solutions of the nominal problem as $\alpha \to 0$.
\Cref{sec:nominal_problem} summarizes existing results for the nominal pressureless Euler equations ($\alpha = 0$).
\Cref{sec:gamma_convergence} uses $\Gamma$-convergence to show that variational solutions of the regularized problem converge to entropy solutions of the pressureless Euler equation as $\alpha \to 0$.
\subsection{The nominal problem}  
\label{sec:nominal_problem}
In the special case of $\alpha = 0$, $F_{\alpha}^{f, \rmeas}$ reduces to 
\begin{equation}
    F_{0}^{f, \rmeas}\left(\diff\right) = 
    \begin{cases}
        \int \limits_{a}^{b} \frac{(\diff(x) - x)^2}{2} \D \rmeas(x) - f(\diff), & \text{if } \diff \text{ is monotone},\\
        \hfill \infty, \hfill & \text{else}.
    \end{cases}
\end{equation}
For functionals $F^{t u_0, \rmeas}_0$ where $f$ amounts to integration against $t u_0 \in L^2(\rmeas)$, minimizers of this functional are given by the $L^2$-projection $P_{\helly} \left(\mathrm{Id} + tu_0\right)$ of $ \mathrm{Id} + t u_0$ onto $\helly$.
IGR amounts to an entropic regularization of this projection operator. 
The entropic regularization of optimal transport and inequality-constrained problems is subject of much current attention \cite{cuturi2013sinkhorn,keith2023proximal,lindsey2023fast,li2023kernel}.
In the unbounded case ($a = -\infty, b = \infty$) the paths $t \mapsto P_{\helly} \left(\mathrm{Id} + tu_0\right)$ are known to describe the dynamics of sticky particles \cite{natile2009wasserstein,brenier2013sticky,cavalletti2015simple}.
In terms of the Lagrangian velocity $\dot{\diff}_t$, the Eulerian velocity density $u$ and mass distribution $\rho_t$ are given as in \cref{def:variational_solution}.
The resulting $u, \rho$ are weak solutions of the pressureless Euler equation (\cref{eqn:1d-igr-pressureless} with $\alpha = 0$) \cite{dermoune1999probabilistic,natile2009wasserstein,cavalletti2015simple,hynd2019lagrangian,hynd2020trajectory}.
The authors of \cite{suder2021lagrangian,carrillo2023equivalence} furthermore show that these weak solutions correspond to entropy solutions of the pressureless Euler equation in the sense of \cite{nguyen2008pressureless}. 
The optimality criterion / KKT condition of the projection $P_{\helly} (\mathrm{Id} + t u)$ corresponds to the entropy condition of the conservation law \cite{carrillo2023equivalence}.
We point out that the setting of finite $a, b$ reduces to a special case of the unbounded setting by mirroring the initial condition about $a$ and $b$. 
For instance, in the interval $[b, b + (b - a)],$ we can define the initial condition as $-u_0(2b - \cdot)$, and in the interval $[a - (b - a), a]$ as $- u_0(2a - \cdot)$. 
Repeating this construction ad infinitum yields a problem on $(-\infty, \infty)$ that satisfies $(\diff_{t}(a), \diff_{t}(b)) \equiv (a, b)$ by symmetry.   
We thus consider paths of the form $t \mapsto P_{\helly} (\mathrm{Id} + t u_0)$ and the resulting $u, \rho$ defined by \cref{eqn:eulerian_sticky_particles} as the natural notion of viscosity solution of the pressureless Euler equation. 
We study the convergence of solutions of \cref{eqn:1d-igr-pressureless} to these solutions as $\alpha \to 0$.
\subsection{\texorpdfstring{$\Gamma$}{Gamma}-convergence of \texorpdfstring{$F_{\alpha}^{f, \rmeas}$}{IGR} and convergence to entropy solutions}
\label{sec:gamma_convergence}
$\Gamma$-convergence provides a natural framework for studying the convergence of functionals, such as $\lim \limits_{\alpha \to 0} F_{\alpha}^{f, \rmeas}$.
\begin{definition}
    Let $\left(F_n\right)_{n \in \N}, F_n\colon X \to \R \cup \{\pm\infty\}$ be a sequence of functionals on a first countable space $X$. 
    We say that $F_{n}$ $\Gamma$-converges to $F_*$ if for all $x \in X$, 
    \begin{align}
        x_{n} \to x \implies \quad F_*(x) &\leq \liminf_{n \to \infty} F_n(x_n),\\
        \forall x \in X, \exists \left(x_{n}\right)_{n \in \N}: x_{n} \to x, \quad F_*(x) &\geq \limsup_{n \to \infty} F_n(x_n).  
    \end{align}
\end{definition}
The $x_n$ in the second part of the definition are called recovery sequences.
The following theorem establishes the convergence of minimizers of $\Gamma$-convergent functionals on compact spaces. 
\begin{theorem}
    \label{thm:fundamental_gamma_convergence}
    Let the topological space $X$ be compact and first countable, and let $F_n$ $\Gamma$-converge to $F_*$.
    Then, $F_*$ has a minimizer in $X$ and its minimum equals $\lim_{n \rightarrow \infty} \inf_{x \in X} F_n(x)$. 
    Furthermore, every accumulation point of a sequence $\left(x_{n}\right)_{n \in \N^*}$ satisfying $\lim_{n \rightarrow \infty} F_{n}(x_{n}) = \inf_{x \in X} F_{n}(x)$ is a minimizer of $F_*$.
\end{theorem}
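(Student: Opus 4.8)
The plan is to run the standard ``fundamental theorem of $\Gamma$-convergence'' argument, adapted to the sequential setting that the paper's definition already adopts. First I would record the two ingredients we may use freely: since $X$ is compact and first countable, every sequence in $X$ has a convergent subsequence (sequential compactness), and the $\Gamma$-convergence hypothesis supplies, for every convergent sequence $x_n \to x$, the bound $F_*(x) \le \liminf_{n} F_n(x_n)$, together with, for every $x \in X$, a recovery sequence $x_n \to x$ with $\limsup_{n} F_n(x_n) \le F_*(x)$. Write $m_- \coloneqq \liminf_{n} \inf_{X} F_n$ and $m_+ \coloneqq \limsup_{n} \inf_{X} F_n$; the goal is to show $m_- = m_+ = \min_{X} F_*$, with the minimum attained.

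Next I would produce a candidate minimizer. Pick a subsequence $(n_k)$ along which $\inf_{X} F_{n_k} \to m_-$, and for each $k$ a point $x_{n_k} \in X$ with $F_{n_k}(x_{n_k}) \le \inf_{X} F_{n_k} + 1/k$. By sequential compactness, after passing to a further subsequence we may assume $x_{n_k} \to x_*$ for some $x_* \in X$. The $\liminf$ inequality then gives $F_*(x_*) \le \liminf_{k} F_{n_k}(x_{n_k}) = m_-$; in particular $\inf_{X} F_* \le m_-$.

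For the matching upper bound, fix an arbitrary $y \in X$ and a recovery sequence $y_n \to y$. Since $\inf_{X} F_n \le F_n(y_n)$ for every $n$, taking $\limsup$ yields $m_+ \le \limsup_{n} F_n(y_n) \le F_*(y)$. As $y$ was arbitrary, $m_+ \le \inf_{X} F_*$. Chaining the inequalities produces $m_+ \le \inf_{X} F_* \le F_*(x_*) \le m_- \le m_+$, so all four quantities coincide: $\lim_{n} \inf_{X} F_n$ exists, equals $\min_{X} F_*$, and this minimum is attained at $x_*$. This settles the first two assertions.

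Finally, for the statement on asymptotically minimizing sequences, let $(x_n)$ satisfy $\lim_{n} F_n(x_n) = \lim_{n} \inf_{X} F_n = \min_{X} F_*$ and let $x$ be an accumulation point, say $x_{n_k} \to x$ along a subsequence. The $\liminf$ inequality gives $F_*(x) \le \liminf_{k} F_{n_k}(x_{n_k}) = \min_{X} F_*$, hence $x$ minimizes $F_*$. The argument is essentially routine; the only points needing care are the repeated passage to subsequences combined with sequential compactness (so that the $\liminf$ inequality applies to an actually convergent sequence) and reading the hypothesis on $(x_n)$ as asymptotic minimality, i.e. $F_n(x_n) - \inf_{X} F_n \to 0$. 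I do not anticipate a substantive obstacle beyond this bookkeeping.
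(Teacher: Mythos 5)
Your proof is correct and is the standard argument for the fundamental theorem of $\Gamma$-convergence. The paper itself does not spell this out: it simply cites \cite[Theorem~2.10]{braides2006handbook} and remarks that the metric hypothesis there is needed only to guarantee first countability, hence sequential compactness. So you are proving from scratch what the paper outsources; the mathematical content is the same, and writing it out is arguably more transparent given that the paper invokes the theorem on the nonmetrizable Helly space.

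Two small bookkeeping points worth tightening. First, when you apply the $\liminf$ inequality to $x_{n_k} \to x_*$, note that the definition in the paper phrases the inequality for sequences indexed by the \emph{full} index set; you should either extend $(x_{n_k})_k$ to a full sequence $(\tilde x_n)_n \to x_*$ (setting $\tilde x_n = x_*$ for $n \notin \{n_k\}$ and using $\liminf_n F_n(\tilde x_n) \le \liminf_k F_{n_k}(x_{n_k})$), or record explicitly that $\Gamma$-convergence is inherited by subsequences. Second, your choice of $x_{n_k}$ with $F_{n_k}(x_{n_k}) \le \inf_X F_{n_k} + 1/k$ implicitly assumes $\inf_X F_{n_k} > -\infty$; since the codomain is $\R \cup \{\pm\infty\}$ you should cover the degenerate case, e.g.\ by taking $F_{n_k}(x_{n_k}) \le \max\bigl(\inf_X F_{n_k} + 1/k,\, -k\bigr)$. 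Neither point affects the conclusion.
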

\begin{proof}
    This follows from \cite[Theorem 2.10]{braides2006handbook}. Note that the requirement therein of $X$ being metric is only needed to ensure first countability.
\end{proof}
We now show that $F_{\alpha}^{f, \rmeas}$ $\Gamma$-converges to $F_{0}^{f, \rmeas}$.
\begin{theorem}
    Let $\ugen \in L^1(\rmeas), \alpha_k \rightarrow 0$ for $\rmeas \ll \mathscr{L}$, $0 <  \essinf \frac{\D \rmeas}{\D \mathscr{L}} \leq \esssup \frac{\D \rmeas}{\D \mathscr{L}} < \infty$.
    Then, $F_{\alpha_{k}}^{\ugen, \rmeas}$ $\Gamma$-converges to $F_{0}^{\ugen, \rmeas}$ with respect to pointwise convergence on $\helly$.
\end{theorem}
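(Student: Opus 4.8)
The plan is to check the two defining conditions of $\Gamma$-convergence directly, using that on $\helly$ the two functionals differ only by the nonnegative penalty $\alpha\,\KL(\diff_{\#}\rmeas \parallel \rmeas)$. Since every element of $\helly$ is monotone, the $\infty$-branch never arises, and we may write $F_{0}^{\ugen,\rmeas}(\diff) = \int_a^b \tfrac12(\diff(x)-x)^2\,\D\rmeas(x) - \int_a^b \diff\,\ugen\,\D\rmeas(x)$ for all $\diff\in\helly$, with $F_{\alpha_k}^{\ugen,\rmeas}(\diff) = F_{0}^{\ugen,\rmeas}(\diff) + \alpha_k\,\KL(\diff_{\#}\rmeas\parallel\rmeas)$. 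Two elementary facts do most of the work. First, $F_{0}^{\ugen,\rmeas}$ is sequentially continuous on $\helly$ for pointwise convergence: the maps in $\helly$ take values in $[a,b]$, so the quadratic term converges by bounded convergence and the linear term by dominated convergence with dominating function $\max(|a|,|b|)\,|\ugen|\in L^1(\rmeas)$. Second, Jensen's inequality gives $\KL(\diff_{\#}\rmeas\parallel\rmeas)\ge0$ for every $\diff\in\helly$, exactly as in the proof of \cref{thm:minimizer_existence}, so $F_{\alpha_k}^{\ugen,\rmeas}\ge F_{0}^{\ugen,\rmeas}$ everywhere on $\helly$.

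\textbf{Liminf inequality.} If $\diff_k\to\diff$ pointwise in $\helly$, then $F_{\alpha_k}^{\ugen,\rmeas}(\diff_k)\ge F_{0}^{\ugen,\rmeas}(\diff_k)$, and the right-hand side converges to $F_{0}^{\ugen,\rmeas}(\diff)$ by the continuity above; hence $\liminf_{k\to\infty}F_{\alpha_k}^{\ugen,\rmeas}(\diff_k)\ge F_{0}^{\ugen,\rmeas}(\diff)$. This uses only $\alpha_k\ge0$ and $\KL\ge0$.

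\textbf{Recovery sequence.} Fix $\diff\in\helly$. I would take $\diff_k\coloneqq(1-\alpha_k)\diff+\alpha_k\,\mathrm{Id}$ once $\alpha_k\le1$ (and $\diff_k\coloneqq\diff$ for the finitely many remaining $k$). Each $\diff_k$ lies in $\helly$, is strictly increasing with every difference quotient at least $\alpha_k$, and $\diff_k\to\diff$ pointwise since $\alpha_k\to0$. The one substantive estimate is $(\diff_k)_{\#}\rmeas\le\tfrac{M}{\alpha_k m}\,\rmeas$, where $M\coloneqq\esssup\tfrac{\D\rmeas}{\D\Leb}$ and $m\coloneqq\essinf\tfrac{\D\rmeas}{\D\Leb}$: for an interval $A$, the preimage $\diff_k^{-1}(A)$ is an interval on which $\diff_k$ grows at rate at least $\alpha_k$, so $\Leb(\diff_k^{-1}(A))\le\alpha_k^{-1}\Leb(A)$; outer regularity extends this to all Borel $A$, giving $(\diff_k)_{\#}\Leb\le\alpha_k^{-1}\Leb$, and combining with $m\Leb\le\rmeas\le M\Leb$ yields the claim. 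Hence $(\diff_k)_{\#}\rmeas\ll\rmeas$ with $\tfrac{\D(\diff_k)_{\#}\rmeas}{\D\rmeas}\le\tfrac{M}{\alpha_k m}$ $\rmeas$-a.e., and since $(\diff_k)_{\#}\rmeas$ is a probability measure, $\KL((\diff_k)_{\#}\rmeas\parallel\rmeas)\le\log\tfrac{M}{\alpha_k m}$. Therefore $F_{\alpha_k}^{\ugen,\rmeas}(\diff_k)=F_{0}^{\ugen,\rmeas}(\diff_k)+\alpha_k\,\KL((\diff_k)_{\#}\rmeas\parallel\rmeas)\le F_{0}^{\ugen,\rmeas}(\diff_k)+\alpha_k\log(M/(\alpha_k m))$, and as $k\to\infty$ the first term tends to $F_{0}^{\ugen,\rmeas}(\diff)$ by continuity while $\alpha_k\log(M/(\alpha_k m))\to0$; thus $\limsup_{k\to\infty}F_{\alpha_k}^{\ugen,\rmeas}(\diff_k)\le F_{0}^{\ugen,\rmeas}(\diff)$.

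\textbf{Main obstacle.} Everything except the pushforward-density estimate is routine. The purpose of the perturbation by $\alpha_k\,\mathrm{Id}$ is exactly that it forces $(\diff_k)_{\#}\rmeas$ to be absolutely continuous with density controlled by $1/\alpha_k$ even when $\diff_{\#}\rmeas$ is singular --- e.g.\ purely atomic when $\diff$ is a step map --- so that the relative entropy diverges only logarithmically and is annihilated by the factor $\alpha_k$. This is the single place where the two-sided bound on $\D\rmeas/\D\Leb$ is used, explaining its presence in the hypotheses; note also that one explicit recovery sequence works for every $\diff$, so no diagonal argument over a preliminary approximation is needed.
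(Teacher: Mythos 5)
Your proof is correct, and for the recovery sequence it is actually tighter than the paper's. The paper takes the interpolation $\diff_k = (1 - \tfrac1k)\diff + \tfrac1k\,\mathrm{Id}$ and verifies only that $\KL\big((\diff_k)_{\#}\rmeas \parallel \rmeas\big)$ is \emph{finite}, with a bound of order $\log k$; this establishes that each $F_{\alpha_k}^{\ugen,\rmeas}(\diff_k)$ is finite, but the needed conclusion $\alpha_k\,\KL\big((\diff_k)_{\#}\rmeas \parallel \rmeas\big)\to 0$ would then require $\alpha_k\log k\to 0$, which is not assumed, so some additional step (e.g.\ a diagonal extraction over a doubly-indexed family) is implicitly left to the reader. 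You avoid this entirely by coupling the interpolation parameter to $\alpha_k$, i.e.\ taking $\diff_k = (1-\alpha_k)\diff + \alpha_k\,\mathrm{Id}$, so that the density bound $\frac{\D(\diff_k)_{\#}\rmeas}{\D\rmeas} \le \tfrac{M}{\alpha_k m}$ gives $\KL \le \log\tfrac{M}{\alpha_k m}$ and hence $\alpha_k\,\KL\to 0$ by $t\log(1/t)\to 0$. Your liminf step (continuity of $F_0^{\ugen,\rmeas}$ on $\helly$ for pointwise convergence via bounded/dominated convergence, plus $\KL\ge 0$) matches the paper's reasoning, stated slightly more strongly as continuity rather than lower semicontinuity. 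The density estimate via $(\diff_k)_{\#}\Leb \le \alpha_k^{-1}\Leb$ on intervals, extended by outer regularity, is a clean elementary route that bypasses the reference to the Villani change-of-variables theorem used in the paper; both are fine.
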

\begin{proof}
    The lim-inf inequality follows from the lower bound of $\Ent$ due to Jensen's inequality, by which $F_{0}^{f, \rmeas} \leq F_{\alpha}^{f, \rmeas} + C \alpha$ for all $\alpha \geq 0$ and a constant $C$, together with the lower semicontinuity of the squared norm and $\diff \mapsto \int_{a}^{b} \diff \cdot \ugen \D \rmeas$. 
    The latter two properties follow from the dominated convergence theorem and uniform boundedness of elements of $\helly$.
    To prove the lim-sup inequality, we now construct a recovery sequence for an arbitrary $\diff \in \helly$.
    We construct a sequence of monotone $\left(\diff_{k}\right)_{k \in \N^*} \in \helly$ that converges to $\diff$, with each element having finite entropy.
    Consider the sequence defined by $\diff_{k} = \left(1 - \frac{1}{k}\right) \diff + \frac{1}{k} \mathrm{Id}$.
    Then, $\diff_{k} \in \helly$ and $\diff_{k} \to \diff$ pointwise.
    The gradient of the almost everywhere differentiable function $\diff_{k}$ is lower bounded by $1/k$. 
    Thus, by \cite[Theorem 11.1]{villani2009optimal}, $\frac{\D \left(\diff_{k}\right)_{\#}\rmeas}{\D \rmeas} \leq \frac{k \esssup \frac{\D\rmeas }{\D \Leb}}{\essinf \frac{\D \rmeas}{\D \Leb}} < \infty$, implying finiteness of $\Ent(\left(\diff_{k}\right)_{\#}\rmeas)$.
    Define now the sequence $\left(l_k\right)_{k \in \N^*}$ by 
    \begin{equation}
        l_k = \max \left(\left\{l \le k \middle| \sqrt{\alpha_k} \Ent\left(\left(\diff_{l}\right)_{\#}\rmeas \right) \leq 1 \right\} \cup \{1\}\right).
    \end{equation}
    By finiteness of each $\Ent\left(\left(\diff_{l}\right)_{\#}\rmeas \right)$ and $\sqrt{\alpha_k} \rightarrow 0$, the sequence $l_k$ diverges to infinity and $\diff_{l_k} \to \diff$ pointwise. 
    For large $k$, $\alpha_k \Ent\left(\left(\diff_{l_k}\right)_{\#}\rmeas \right) \leq \sqrt{\alpha_k} \to 0$ by choice of $l_k$. 
    Thus, $\left(\diff_{l_k}\right)_{k \in \N^*}$ is a recovery sequence for $\diff$.
\end{proof}
As indicated in \cref{eqn:variational_exp}, the functional of interest is the minimizer of $F_{\alpha}^{f^{t \uvel_0}_{\alpha}, \rmeas}$, for 
\begin{equation}
  f_{\alpha}^{u_0}(\diff) \coloneqq \int \limits_{a}^{b} \diff \cdot u_0 + \alpha \partial_x \diff \cdot \left(\partial_x u_0 \right) \D \rmeas(x) 
  - \int \limits_{a}^{b} \alpha \partial_x \diff \D \rmeas(x),
\end{equation}
motivating the need to show convergence of this functional to $F_{0}^{tu_0, \rmeas}$, as well.
\begin{corollary}
    Let $\rmeas \ll \Leb$ with $\frac{\D \rmeas}{\D \Leb} \in W^{1,1}$, $0 <  \essinf \frac{\D \rmeas}{\D \mathscr{L}} \leq \esssup \frac{\D \rmeas}{\D \mathscr{L}} < \infty$ and $u_0 \in W^{2,1}$.
    Then, for any $t \geq 0$ and positive sequence $\alpha_k \rightarrow 0$, $F_{\alpha_{k}}^{f^{tu_0}_{\alpha_{k}}, \rmeas}$ $\Gamma$-converges to $F_{0}^{f^{tu_0}_{0}, \rmeas}$.
\end{corollary}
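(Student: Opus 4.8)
The plan is to deduce the corollary from the preceding theorem (which gives $\Gamma$-convergence of $F_{\alpha_k}^{\ugen,\rmeas}$ for a \emph{fixed} datum $\ugen$), by noting that letting the datum depend on $\alpha_k$ perturbs the functional only by a term that is $O(\alpha_k)$ uniformly over $\helly$, and that $\Gamma$-convergence is insensitive to such perturbations.

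First I would isolate the $\alpha$-dependence of the datum. From the displayed formula for $u_\alpha$, with $\dot\diff_0=u_0$, the datum is affine in $\alpha$,
\[
  u_\alpha = \bar u_0 - t\,\alpha\, w,\qquad \bar u_0\coloneqq t\,u_0\,\rho_0^{-1},\qquad w\coloneqq\partial_x\big(\rho_0^{-1}\partial_x u_0\big),
\]
where $\bar u_0$ is the $\alpha$-independent part, so that $F_0^{\bar u_0,\rho_0}$ is the functional written $F_0^{u_0,\rho_0}$ in the statement. Hence, for $\diff\in\helly$,
\[
  F_\alpha^{u_\alpha,\rho_0}(\diff)=F_\alpha^{\bar u_0,\rho_0}(\diff)+G_\alpha(\diff),\qquad
  G_\alpha(\diff)\coloneqq t\,\alpha\int_a^b\diff\,w\,\D\rho_0 .
\]
(If one instead keeps the exact minimization \eqref{eqn:variational_exp}, one also picks up the term $\alpha\int_a^b\partial_x\diff\,\D\rho_0$; since $\partial_x\diff\ge0$ and $\int_a^b\aederiv_x\diff\,\D x\le b-a$ for $\diff\in\helly$, this is bounded by $\alpha\,\|\rho_0\|_{L^\infty}(b-a)$ uniformly over $\helly$, and is treated exactly like the $w$-term below.) Note $\bar u_0\in L^1(\rho_0)$, since $\int_a^b|\bar u_0|\,\D\rho_0=t\|u_0\|_{L^1}<\infty$.

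Second, I would verify $w\in L^1$ — the only step where the extra regularity of the corollary is used. As $\rho_0\in W^{1,1}$ with $\essinf\rho_0>0$, the reciprocal $\rho_0^{-1}$ is continuous, bounded, and in $W^{1,1}$, with $\partial_x(\rho_0^{-1})=-\rho_0^{-2}\partial_x\rho_0\in L^1$. In one dimension $W^{2,1}\hookrightarrow C^1([a,b])$, so $\partial_x u_0\in L^\infty\cap W^{1,1}$ and $\partial_x^2 u_0\in L^1$; the Leibniz rule then gives $w=\partial_x(\rho_0^{-1})\,\partial_x u_0+\rho_0^{-1}\,\partial_x^2 u_0\in L^1$. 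Since $|\diff|\le\max(|a|,|b|)$ on $\helly$ and $\rho_0$ has bounded density,
\[
  \sup_{\diff\in\helly}|G_\alpha(\diff)|\le t\,\alpha\,\max(|a|,|b|)\,\|\rho_0\|_{L^\infty}\,\|w\|_{L^1}\xrightarrow[\alpha\to0]{}0 .
\]

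Finally, I would use the stability of $\Gamma$-convergence under uniformly vanishing perturbations: if $H_k\xrightarrow{\Gamma}H$ on $\helly$ with its pointwise topology and $\sup_{\diff}|G_k(\diff)|\to0$, then $H_k+G_k\xrightarrow{\Gamma}H$. Indeed, $\diff_k\to\diff$ gives $\liminf_k(H_k+G_k)(\diff_k)\ge\liminf_k H_k(\diff_k)-\limsup_k\|G_k\|_\infty\ge H(\diff)$, and any recovery sequence for $H_k\xrightarrow{\Gamma}H$ remains one for $H_k+G_k$ because $\limsup_k(H_k+G_k)(\diff_k)\le\limsup_k H_k(\diff_k)+\limsup_k\|G_k\|_\infty\le H(\diff)$. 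Applying this with $H_k=F_{\alpha_k}^{\bar u_0,\rho_0}$, $G_k=G_{\alpha_k}$ (plus the correction term when present), together with the preceding theorem — applicable since $\bar u_0\in L^1(\rho_0)$ and $\rho_0$ has density bounded above and away from $0$ — yields $F_{\alpha_k}^{u_{\alpha_k},\rho_0}\xrightarrow{\Gamma}F_0^{\bar u_0,\rho_0}=F_0^{u_0,\rho_0}$. The only genuinely technical point is the second step, the $L^1$ bound on $w=\partial_x(\rho_0^{-1}\partial_x u_0)$; the remainder is a soft combination of the preceding theorem with the perturbation lemma.
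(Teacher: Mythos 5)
Your proof is correct and follows essentially the same route as the paper: decompose $u_{\alpha_k}$ into a fixed part plus an $\alpha_k$-weighted linear perturbation, use the regularity hypotheses to see the perturbation is controlled in $L^1$ and hence vanishes uniformly over $\helly$ as $\alpha_k\to 0$, and then invoke stability of $\Gamma$-convergence under uniformly vanishing perturbations together with the preceding theorem. You are, if anything, more careful than the paper's terse proof — which asserts only uniform boundedness of $\langle u_{\alpha_k}-u_0,\cdot\rangle$ where uniform vanishing is what is actually used — by explicitly isolating the $\alpha$-linear remainder $w=\partial_x(\rho_0^{-1}\partial_x u_0)$, verifying $w\in L^1$ via $W^{2,1}\hookrightarrow C^1$ and the product rule, and spelling out the perturbation lemma for $\Gamma$-limits.
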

\begin{proof}
    By the regularity assumptions on $\rmeas$ and $u_0$, the functionals $f_{\alpha_{k}}^{t u_{0}}$ are represented by integration against a uniformly bounded sequence of functions $u_{\alpha_k}$ in $L^1(\rmeas)$.
    By absolute continuity of functions in $W^{1,1}$ and the regularity assumptions on $\rmeas$ and $\uvel_0$, the sequence $\left(\alpha_{k}^{-1}\left(u_{\alpha_{k}} - t u_{0}\right)\right)_{k \in \N^*}$ is uniformly bounded in $L^1(\rmeas)$.
    Thus, the functional $\left \langle \left(\alpha_{k}^{-1}\left(u_{\alpha_{k}} - t u_{0}\right)\right), \cdot \right \rangle_{L^2(\rmeas)}$ is uniformly bounded over $\helly$ and for all convergent sequences $\left(\diff_{n}\right) \in \helly$ we have 
    \begin{equation}
    \liminf \limits_{n \rightarrow \infty} F_{\alpha_{n}}^{f^{t u_0}_{\alpha_{n}}, \rmeas}\left(\diff_n\right) = \liminf \limits_{n \rightarrow \infty} F_{\alpha_{n}}^{t u_{0}, \rmeas}\left(\diff_n\right),  \quad \text{and} \quad \limsup \limits_{n \rightarrow \infty} F_{\alpha_{n}}^{f^{t u_0}_{\alpha_{n}}, \rmeas}\left(\diff_n\right) = \limsup \limits_{n \rightarrow \infty} F_{\alpha_{n}}^{t u_{0}, \rmeas}\left(\diff_n\right),
    \end{equation}
    from which the result follows, by the previous theorem.
\end{proof}
We conclude the convergence of minimizers of $F_{\alpha}^{f^{t \uvel_0}_{\alpha}, \rmeas}$ to the minimizer of $F_{0}^{tu_0, \rmeas}$.
\begin{corollary}
    Let $\rmeas \ll \Leb$ with $\frac{\D \rmeas}{\D \Leb} \in W^{1,1}, 0 < \essinf \frac{\D \rmeas}{\D \mathscr{L}} \leq \esssup \frac{\D \rmeas}{\D \mathscr{L}} < \infty$ and $u_0 \in W^{2,1}$.
    Any sequence of minimizers $\diff_{t, \alpha}$ of the functional $F_{\alpha}^{f^{t \uvel_0}_{\alpha}, \rmeas}$ converges pointwise to the unique minimizer of $F_{0}^{t u_0, \rmeas}$. 
\end{corollary}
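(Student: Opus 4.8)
The plan is to deduce this from the $\Gamma$-convergence established in the preceding corollary, the fundamental theorem of $\Gamma$-convergence (\cref{thm:fundamental_gamma_convergence}), and the uniqueness of the limiting minimizer (\cref{thm:uniqueness_minimizers}). First I would record the identification $\dot{\diff}_0 = u_0$, so that, carrying the time-scaling along as bookkeeping, the preceding corollary says exactly that the functionals $F_{\alpha_k}^{u_{\alpha_k}, \rmeas}$ $\Gamma$-converge, on $\helly$ equipped with the topology of pointwise convergence, to $F_{0}^{t \dot{\diff}_0, \rmeas}$. Since $\helly$ is compact and first countable by \cref{le:helly_properties}, \cref{thm:fundamental_gamma_convergence} applies: $F_{0}^{t \dot{\diff}_0, \rmeas}$ has a minimizer, and any sequence $(\diff_{t,\alpha_k})$ with $F_{\alpha_k}^{u_{\alpha_k},\rmeas}(\diff_{t,\alpha_k}) = \inf_{\helly} F_{\alpha_k}^{u_{\alpha_k},\rmeas}$ (in particular, any sequence of exact minimizers) has all its accumulation points among the minimizers of $F_{0}^{t \dot{\diff}_0, \rmeas}$.

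Next I would pin down that this limiting minimizer is unique. Since $u_0 \in W^{2,1} \hookrightarrow L^\infty$ and $\rmeas$ is a probability measure, $t\dot{\diff}_0 = t u_0 \in L^1(\rmeas)$; moreover the density $\frac{\D \rmeas}{\D \Leb}$ is bounded below by $\essinf \frac{\D \rho_0}{\D \mathscr{L}} > 0$ and thus strictly positive. Hence \cref{thm:uniqueness_minimizers} (in the case $\alpha = 0$) gives a unique minimizer $\diff^*_t \in \helly$ of $F_{0}^{t \dot{\diff}_0, \rmeas}$.

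It then remains to upgrade ``every accumulation point equals $\diff^*_t$'' into convergence of the whole sequence. By \cref{le:helly_properties}, $\helly$ is sequentially compact, so I would argue by contradiction: if $\diff_{t,\alpha_k} \not\to \diff^*_t$ pointwise, there are $x_0 \in [a,b]$, $\varepsilon > 0$, and a subsequence along which $\lvert \diff_{t,\alpha_{k_j}}(x_0) - \diff^*_t(x_0)\rvert \ge \varepsilon$; the subsequence $F_{\alpha_{k_j}}^{u_{\alpha_{k_j}},\rmeas}$ still $\Gamma$-converges to $F_{0}^{t \dot{\diff}_0, \rmeas}$ (the $\liminf$ inequality passes to subsequences, and a subsequence of a recovery sequence is again a recovery sequence), and the $\diff_{t,\alpha_{k_j}}$ are still minimizers, so a further pointwise-convergent sub-subsequence has a limit $\tilde\diff \in \helly$ which \cref{thm:fundamental_gamma_convergence} forces to be a minimizer of $F_{0}^{t \dot{\diff}_0, \rmeas}$, hence $\tilde\diff = \diff^*_t$, contradicting $\lvert \tilde\diff(x_0) - \diff^*_t(x_0)\rvert \ge \varepsilon$. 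This yields pointwise convergence of $\diff_{t,\alpha_k}$ to $\diff^*_t$. There is no real obstacle here; the only point requiring a little care is precisely this last subsequence argument converting ``accumulation points are the minimizer'' into convergence, and checking the hypotheses of \cref{thm:uniqueness_minimizers} (strict positivity of the density, $t\dot{\diff}_0 \in L^1(\rmeas)$), both of which are immediate from the stated assumptions.
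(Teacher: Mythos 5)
Your proof is correct and follows essentially the same route as the paper: invoke the preceding corollary for $\Gamma$-convergence, apply \cref{thm:fundamental_gamma_convergence} on the compact, first-countable space $\helly$ to localize accumulation points among minimizers of $F_0^{t\dot\diff_0,\rmeas}$, invoke uniqueness from \cref{thm:uniqueness_minimizers}, and then upgrade to full-sequence convergence via the standard ``every subsequence has a further subsequence converging to $\diff^*_t$'' argument. You merely spell out what the paper compresses into a single sentence, and you usefully verify the hypotheses of \cref{thm:uniqueness_minimizers} (strict positivity of the density and $t\dot\diff_0\in L^1(\rmeas)$ via $W^{2,1}\hookrightarrow L^\infty$ in one dimension), which the paper leaves implicit.
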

\begin{proof}
    The result follows from the previous corollary and \cref{thm:fundamental_gamma_convergence} by using the fact that if any subsequence of a sequence in a topological space $X$ has a subsequence converging to a point $x^* \in X$, then the entire sequence converges to $x^*$. 
\end{proof}

\section{Spatial regularity of minimizers}
\label{sec:weak_solutions}
\subsection*{Overview}
This section establishes the spatial regularity of minimizers of $F^{\ugen, \rmeas}_{\alpha}$.
By constructing a suitable variation, \cref{sec:rho_lower_bound} proves a lower bound on the pushforward density of minimizers of $F_{\alpha}^{v, \rmeas}$.
\Cref{sec:absolute_continuity} uses this result to show that the minimizers are absolutely continuous, with uniformly bounded weak derivatives.
Using another variation, \cref{sec:lower_bound_derivative} proves a lower bound for the minimizer's derivative.
\Cref{sec:higher_order_regularity} uses a bootstrap argument to prove higher regularity of minimizers that matches the regularity of the problem data.
\subsection{Lower bound on the pushforward density}
\label{sec:rho_lower_bound}
The key step in proving regularity of the minimizers of $F_{\alpha}^{\ugen, \rmeas}$ is to establish a lower bound on the pushforward density $\diff^*_{\#} \rmeas$ of the minimizer $\diff^*$.
This is the most delicate step of our argument, since the function $x \mapsto x \log(x)$ in the definition of $\Ent\left(\cdot \right)$ is bounded in the limit of small $x$.
Likewise, the function $x \mapsto -\log(x)$ in $\hat{F}_{\alpha}$ grows very slowly, amounting to very weak control on the size of $\partial_x \diff^*$.
By the change of variables formula, lower bounds on pushforward densities imply upper bounds on $\partial_x \diff^*$, enabling us to make first progress on establishing regularity of $\diff^*$.
\begin{lemma}\label[lemma]{le:rho_lower_bound}
  For a probability measure $\rmeas$ on $[a, b]$ with $\rmeas \ll \Leb$, write $\mu = \frac{\D \rmeas}{\D \Leb}$ and suppose that:
  \begin{enumerate}
    \item[(1)] $\mu \in W^{1, \infty}([a, b])$.
    \item[(2)] $0 <  C_{\mathrm{min}}(\rmeas) \le \essinf \mu(\cdot)$ and $\esssup \mu(\cdot) \le C_{\mathrm{max}}(\rmeas) < \infty$.
  \end{enumerate}
  Let $\diff^*$ be the minimizer of $F_{\alpha}^{\ugen, \rmeas}$ (its uniqueness follows from \cref{thm:uniqueness_minimizers}) for $\alpha > 0$ and $\ugen \in L^\infty$.
  Set $\D x \coloneqq \D \Leb$ and define
    $\rho \defeq \frac{\D \diff^*_{\#} \rmeas}{\D x}$.
  Furthermore, suppose that $\norm{\ugen}_{L^{\infty}} \le C_{\max}^{\mathrm{vel}}$.
  Then, there exists a constant $C(C_{\max}^{\textup{vel}}, \rmeas, \norm{\partial_x\mu}_{L^\infty}, \alpha, a, b) > 0$
  such that
  \begin{equation}
    \essinf_{y \in [a, b]} \rho(y) \ge C(C_{\max}^{\textup{vel}}, \rmeas, \norm{\partial_x\mu}_{L^\infty}, \alpha, a, b).
  \end{equation}
\end{lemma}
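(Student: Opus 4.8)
My plan is to establish the lower bound by a variational argument: perturb the minimizer $\diff^*$ in a way that pushes mass away from any region where the pushforward density is too small, and show that if $\essinf \rho$ were below a threshold depending only on the stated constants, this perturbation would strictly decrease $F_\alpha^{\ugen,\rmeas}$, contradicting optimality. Concretely, I would first fix notation and basic facts: by \cref{lem:injective_minimizer} the minimizer $\diff^*$ is strictly increasing (hence invertible) and $\diff^*([a,b]) = [a,b]$ since $\diff^*(a)=a,\diff^*(b)=b$ and $\diff^*$ is continuous (monotone surjective onto $[a,b]$); by \cref{thm:uniqueness_minimizers} and finiteness of $F_\alpha^{\ugen,\rmeas}(\mathrm{Id})$ we have $\KL(\diff^*_\#\rmeas\parallel\rmeas)<\infty$, so $\diff^*_\#\rmeas \ll \rmeas \ll \Leb$, and the densities $\rho = \D\diff^*_\#\rmeas/\D\rmeas$, $\tilde\rho = \D\diff^*_\#\rmeas/\D x$ are well-defined with $\tilde\rho = \rho\cdot(\D\rmeas/\D\Leb)$ $\Leb$-a.e. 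The change-of-variables identity $\rho\circ\diff^*(x) = 1/\partial_x\diff^*(x)$ (which the excerpt already invokes via \cite[Theorem 11.1]{villani2009optimal}) turns a lower bound on $\rho$ into the upper bound $\partial_x\diff^* \le 1/C$ that the subsequent sections need.

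The core estimate: suppose for contradiction that $\rho$ attains very small values. Since $\diff^*_\#\rmeas$ has total mass $1$ supported on an interval of length $b-a$, and $\D\rmeas/\D\Leb \le C_{\max}(\rmeas)$, there must be a region of positive Lebesgue measure where $\tilde\rho$ is at least a fixed positive constant $c_0 = c_0(a,b,C_{\max})$ (e.g. $1/(2(b-a))$), and the set $\{y : \tilde\rho(y) < \epsilon\}$ where $\epsilon$ is the putative small value. I would construct a competitor $\diff^\epsilon$ obtained from $\diff^*$ by post-composition with a smooth, increasing, boundary-fixing map $T_\epsilon\colon[a,b]\to[a,b]$ that transports a small amount of mass from the "dense" region into the "sparse" region — i.e. $\diff^\epsilon = T_\epsilon\circ\diff^*$, which stays in $\helly$. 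I then compare the three terms of $F_\alpha$ on $\diff^\epsilon$ versus $\diff^*$: (i) the quadratic term $\int (\diff(x)-x)^2/2\,\D\rmeas$ changes by $O(\|T_\epsilon - \mathrm{Id}\|_\infty)$, i.e. linearly in the mass $\delta$ moved and the displacement; (ii) the linear term $-\int \diff\cdot\ugen\,\D\rmeas$ changes by at most $\|\ugen\|_{L^\infty}\cdot O(\delta)$ using the hypothesis $\|\ugen\|_{L^\infty}\le C_{\max}^{\mathrm{vel}}$; (iii) the entropy term $\alpha\KL(\diff^\epsilon_\#\rmeas\parallel\rmeas) = \alpha\KL((T_\epsilon)_\#\diff^*_\#\rmeas\parallel\rmeas)$ — here is where the gain comes from. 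Writing $h(t) = t\log t$, moving mass out of a region of density $\approx c_0$ and into a region of density $\approx\epsilon$ changes $\int h(\tilde\rho)\,\D x$ by an amount whose leading term, after dividing by $\Leb$-density bounds to pass between $\rho$ and $\tilde\rho$, behaves like $\delta\big(h'(\epsilon\cdot\text{something}) - h'(c_0\cdot\text{something})\big) \approx \delta\log\epsilon \to -\infty$ as $\epsilon\to 0$. Thus the entropic term decreases like $\alpha\,\delta\,\log\epsilon$, which dominates the $O(\delta)$ increases from (i) and (ii) once $\epsilon$ is small enough relative to $\alpha, C_{\max}^{\mathrm{vel}}, a, b, \rmeas$ — contradicting minimality. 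Reading off the threshold on $\epsilon$ from this competition gives the explicit constant $C(C_{\max}^{\mathrm{vel}},\rmeas,\alpha,a,b)$.

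The main obstacle, and the reason the excerpt flags this as "the most delicate step," is making the entropy-gain computation rigorous without assuming any a priori regularity of $\diff^*$ or of $\tilde\rho$ beyond integrability: $\tilde\rho$ is only an $L^1$ density, its superlevel and sublevel sets are merely measurable, and I cannot assume $\tilde\rho$ is continuous or that $\{\tilde\rho<\epsilon\}$ is an interval. I would handle this by working with the pushed-forward measures directly rather than with pointwise density values — i.e. estimate $\KL((T_\epsilon)_\#\mu\parallel\rmeas) - \KL(\mu\parallel\rmeas)$ for $\mu = \diff^*_\#\rmeas$ using convexity of $h$ and the explicit structure of $T_\epsilon$ as a measure-rearrangement, choosing $T_\epsilon$ via its action on $\mu$ (e.g. a "mass-splitting" map supported where $\mu$ is concentrated). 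A second technical point is ensuring $\diff^\epsilon = T_\epsilon\circ\diff^*$ still has finite KL (so the comparison is not $\infty$ vs.\ $\infty$); taking $T_\epsilon$ bi-Lipschitz with constants bounded in terms of $\epsilon$ suffices, again via \cite[Theorem 11.1]{villani2009optimal}. Once the one-parameter family $T_\epsilon$ is set up with these properties, the three-term comparison and the extraction of the explicit threshold are routine.
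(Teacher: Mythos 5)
Your high-level strategy matches the paper's: perturb the minimizer, exploit the $\log$-divergence of the entropy first variation at low density, and contradict optimality. The implementation differs in a way that creates a real gap in the term (i) estimate. The paper does not post-compose on the range. It picks a preimage $\hat{x}$ of a Lebesgue point $y_0$ with $\tilde\rho(y_0)=\Lambda$ small and modifies $\diff^*$ only on the tiny \emph{domain} interval $[\hat{x}-\delta,\hat{x}+\delta]$, $\delta = 2\eps$, replacing it by a linear interpolant onto the image $[\diff^*(\hat{x}-\eps),\diff^*(\hat{x}+\eps)]$ and rescaling the rest of the domain by $1+O(\eps)$. This pushes roughly double the source mass into the sparse image interval, so the density there doubles \emph{by mass gain}, not by compression. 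Because the perturbed region has $\rmeas$-measure $O(\eps)$ in the domain and the integrand $(\diff-x)^2$ is bounded by $(b-a)^2$, the quadratic and linear changes are $O(\eps)$ uniformly in $\Lambda$ for free (Step~1), while Step~3 yields the $\alpha\cdot O(\eps\log\Lambda)$ gain. The variation is local in the domain; that locality is exactly what makes (i) trivial.

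Your $T_\eps\circ\diff^*$ is global: to move mass from a dense region $A$ to a sparse region $B$, $T_\eps$ must differ from the identity on the whole stretch between them, and the monotone rearrangement there is displaced by roughly $\delta/\tilde\rho$, so $\|T_\eps-\mathrm{Id}\|_\infty$ can be as large as $\delta/\Lambda$, not $O(\delta)$. Hence the bound ``changes by $O(\|T_\eps-\mathrm{Id}\|_\infty)$, i.e.\ linearly in the mass $\delta$'' is not justified; as written, the quadratic term competes as $\delta/\Lambda$ against the entropy gain $\alpha\delta\log\Lambda$, and since $1/\Lambda\gg|\log\Lambda|$ it wins, so the contradiction does not close. (Note also that a pure range-reparametrization that only compresses the sparse image interval, without importing mass, changes the entropy there by $|I|\Lambda\log 2 = O(\eps)$, not $O(\eps\log\Lambda)$, so ``doubling the density'' is not by itself enough---the density must increase by mass transfer.) The fix is to estimate (i) and (ii) in $L^1(\rmeas)$, not $L^\infty$: since $\diff^\eps$ and $\diff^*$ are monotone rearrangements of $\rmeas$, $\int|\diff^\eps-\diff^*|\,\D\rmeas = W_1\bigl((T_\eps)_\#\mu,\mu\bigr)\le \delta(b-a)$, which does give the needed $O(\delta)$ bound on (i), (ii) uniformly in $\Lambda$. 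With that Wasserstein estimate in place (together with your flagged technicalities on finiteness of the perturbed $\KL$, nonnegativity and the Taylor bound on $B$, and the boundary conditions $\diff^\eps(a)=a,\diff^\eps(b)=b$), your global-rearrangement route becomes a viable alternative to the paper's local-variation route.
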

\begin{proof}
To simplify notation, we define $\kappa \coloneqq 2\frac{C_{\max}(\rmeas)}{C_{\min}(\rmeas)}$
and constants
\begin{align}
  C_R &\defeq \frac{\norm{\partial_x \rmeas}_{L^{\infty}}}{C_{\min}(\rmeas)} \left( 2 \frac{C_{\max} (\rmeas)}{C_{\min}(\rmeas)} - 1 \right),\\
  E_{\rho} &\defeq
  \frac{2\max\{\abs{a}, \abs{b}\} C_{\max}^{\mathrm{vel}}}{\alpha}
  + \abs{\Ent(\rmeas)}
  + \frac{2(b-a)}{e},\\
  C_{\clubsuit_2} &\defeq
  \alpha C_R\left(E_{\rho} + 2\frac{C_{\max}(\rmeas)}{C_{\min}(\rmeas)}\right),\\
  \tilde{C}
  &\defeq
  2 \kappa C_{\max}(\rmeas) (b - a)^2 +  2 \kappa C_{\max}(\rmeas) (b - a) \norm{\ugen}_{L^{\infty}}
  +  \kappa \frac{2 \alpha}{e}.
\end{align}
Toward a contradiction, suppose that $\essinf_{y \in [a, b]} \rho(y) = 0$.
\begin{figure}
  \begin{tikzpicture}
\begin{groupplot}[
	compat=1.3,
	group style={group size=3 by 1,
	horizontal sep=0.25cm,
    vertical sep=0.5cm,},
	]
	\nextgroupplot[
		standard,
 		xlabel={$x$},
		ylabel={$\diff(x)$},
		height=0.15\textwidth,
		width=0.30\textwidth,
        ymin=-0.10,
        ymax=1.1,
        xmin=-1.10,
        xmax=1.10,
        yticklabels={{$a$,$\hat{y}$, $b$}},
        xticklabels={{$a$,$\hat{x}$, $b$}},
		xtick={-1, 0.15, 1},
		ytick={0, 0.5, 1},
		enlarge x limits=0.,
		enlarge y limits=0.,
		legend style={
			at={(1.480,1.00)},inner sep=3pt,anchor=south,legend columns=5,legend cell align={left}, draw=none,fill=none, /tikz/every even column/.append style={column sep=0.10cm}},
		]

		\addlegendimage{silver,ultra thick}
		\addlegendimage{joshua, ultra thick, dashed}
		\addlegendimage{joshua, ultra thick}
		\addlegendimage{rust, ultra thick, dashed}
		\addlegendimage{orange, ultra thick}

		\legend{{$\diff^*(\cdot)\!=\!\diff^*(\cdot, 0)$},
				{regularize $[\hat{x}\! - \!\eps, \hat{x}]$},
				{$\diff^*(\cdot, \eps), \delta = \eps$},
				{$[\hat{x}\!-\! \eps, \hat{x}] {\color{rust} \to}[\hat{x}\!-\!\delta,\hat{x}]$},
				{$\diff^*(\cdot, \eps), \delta = 2 \epsilon$}}

		\addplot[silver,ultra thick,domain=-1:0.15,samples=100] {(x + 1) / 3 + sin(deg(2 * pi * (x + 1))) / 20};
		\addplot[silver,ultra thick,domain=0.15:1,samples=100] {(x + 2) / 3 + sin(deg(2 * 2 * pi * x)) / 35};
		\addplot[joshua,ultra thick,dashed,domain=-0.075:0.15,samples=100] {((-0.075 + 1) / 3 + sin(deg(2 * pi * (-0.075 + 1))) / 20) + (x + 0.075) / 0.225 * (((0.15 + 2) / 3 + sin(deg(2 * 2 * pi * 0.15)) / 35) - ((-0.075 + 1) / 3 + sin(deg(2 * pi * (-0.075 + 1))) / 20))};

	\nextgroupplot[
		standard,
 		xlabel={$x$},
		ylabel={},
		height=0.15\textwidth,
		width=0.30\textwidth,
        ymin=-0.10,
        ymax=1.1,
        xmin=-1.10,
        xmax=1.10,
        yticklabels={{$a$,$\hat{y}$, $b$}},
        xticklabels={{$a$, $\hat{x} - \eps$, $\hat{x}$, $b$}},
		xtick={-1, -0.075, 0.15, 1},
		ytick={0, 0.5, 1},
		ymajorticks=false,
		enlarge x limits=0.,
		enlarge y limits=0.,
		legend style={
			at={(1.80,1.00)},inner sep=3pt,anchor=south,legend columns=6,legend cell align={left}, draw=none,fill=none, /tikz/every even column/.append style={column sep=0.20cm}},
		]

		\addplot[silver,ultra thick,domain=-1:0.15,samples=100] {(x + 1) / 3 + sin(deg(2 * pi * (x + 1))) / 20};
		\addplot[silver,ultra thick,domain=0.15:1,samples=100] {(x + 2) / 3 + sin(deg(2 * 2 * pi * x)) / 35};

		\addplot[joshua,ultra thick,domain=-1:-0.075,samples=100] {(x + 1) / 3 + sin(deg(2 * pi * (x + 1))) / 20};
		\addplot[joshua,ultra thick,domain=-0.075:0.15,samples=100] {((-0.075 + 1) / 3 + sin(deg(2 * pi * (-0.075 + 1))) / 20) + (x + 0.075) / 0.225 * (((0.15 + 2) / 3 + sin(deg(2 * 2 * pi * 0.15)) / 35) - ((-0.075 + 1) / 3 + sin(deg(2 * pi * (-0.075 + 1))) / 20))};
		\addplot[joshua,ultra thick,domain=0.15:1,samples=100] {(x + 2) / 3 + sin(deg(2 * 2 * pi * x)) / 35};

		\addplot[rust, ultra thick, domain=-0.30:-0.075,samples=100, <-, dashed] {0.5};
		\draw[ultra thick, rust, dashed] (axis cs: -0.075,-0.1) -- (axis cs: -0.075,2.0);

	\nextgroupplot[
		standard,
 		xlabel={$x$},
		ylabel={},
		height=0.15\textwidth,
		width=0.30\textwidth,
        ymin=-0.10,
        ymax=1.1,
        xmin=-1.10,
        xmax=1.10,
        yticklabels={{$a$,$\hat{y}$, $b$}},
        xticklabels={{$a$, $\hat{x} - \delta$, $\hat{x}$, $b$}},
		xtick={-1, -0.3, 0.15, 1},
		ytick={0, 0.5, 1},
		ymajorticks=false,
		enlarge x limits=0.,
		enlarge y limits=0.,
		legend style={
			at={(1.80,1.00)},inner sep=3pt,anchor=south,legend columns=6,legend cell align={left}, draw=none,fill=none, /tikz/every even column/.append style={column sep=0.20cm}},
		]

		\addplot[orange,ultra thick,domain=-1:-0.3,samples=100] {0.925 / 0.7 * (x + 1) / 3 + sin(deg(2 * pi * 0.925 / 0.7 * (x + 1))) / 20};
		\addplot[orange,ultra thick,domain=-0.3:0.15,samples=100] {((-0.075 + 1) / 3 + sin(deg(2 * pi * (-0.075 + 1))) / 20) + (x + 0.3) / 0.45 * (((0.15 + 2) / 3 + sin(deg(2 * 2 * pi * 0.15)) / 35) - ((-0.075 + 1) / 3 + sin(deg(2 * pi * (-0.075 + 1))) / 20))};
		\addplot[orange,ultra thick,domain=0.15:1,samples=100] {(x + 2) / 3 + sin(deg(2 * 2 * pi * x)) / 35};

		\draw[ultra thick, rust, dashed] (axis cs: -0.3,-0.1) -- (axis cs: -0.3,2.0);

\end{groupplot}

\end{tikzpicture}
  \caption{\textbf{The variation $\diff(\cdot, \eps)$} in \eqref{eqn:variation} consists of two steps. For $\delta = \eps > 0$, it simply replaces $\diff$ on the interval $[\hat{x} - \eps, \hat{x}]$ by a linear interpolation. For $\delta > \eps$ it stretches the resulting function values on $[\hat{x} - \eps, \hat{x}]$ to $[\hat{x} - \delta, \hat{x}]$ and shrinks the remaining region $[a, \hat{x} - \delta]$ of the input space.
  The first step replaces parts of the output of $\diff$ while the second only rearranges outputs.}
  \label{fig:variation}
\end{figure}
Then, for all $\Lambda > 0$, the set $\left\{ \rho \le \Lambda \right\}$ has positive Lebesgue measure.
By \cite[Theorem 2]{jessen1935note}, for arbitrarily small $\Lambda > 0$, there exists $\hat{y} \in [a, b]$ such that
for any sequence of intervals $I \to \left\{ \hat{y} \right\}$  containing $\hat{y}$ with $\lim \Leb(I) = 0$,
\begin{equation}
  \lim_{I \to \{\hat{y}\}} \frac{1}{\abs{I}} \int \limits_{I} \rho(y)  \D y
  = \rho(\hat{y}) = \Lambda.
\end{equation}
By choosing $1 > \Lambda > 0$ sufficiently small we can ensure that
\begin{align}
\tilde{C}
+ C_{\clubsuit_2}
+ \alpha \left(C_{\max}\left(\rmeas\right) \log \Lambda
- \kappa C_{\max}(\rmeas) \log \left( \frac{C_{\min}(\rmeas)}{\kappa C_{\max}(\rmeas)} \right) \right) \le -1.
\end{align}
Choose $\hat{x} \in [a, b]$ such that $\sup \left\{ x: \diff^*(x) \le \hat{y} \right\} \le \hat{x} \le \inf \left\{ x: \diff^*(x) \ge \hat{y} \right\}$.
Let $\eps > 0$.
Without loss of generality, $\hat{x} \in [(a + b)/2, b]$, the alternative case is treated symmetrically.
As illustrated in \cref{fig:variation}, we define $\diff(x, \eps)$ as:
  \begin{equation}
    \label{eqn:variation}
    \diff(x, \eps)
    =
    \begin{cases}
      \diff^*\left( \frac{\hat{x} - \eps - a}{\hat{x} - \delta - a} (x - a) + a \right) & x \in [a, \hat{x} - \delta) \\
      \frac{\diff^*(\hat{x}) - \diff^*(\hat{x} - \eps)}{\delta} (x - (\hat{x} - \delta)) + \diff^*(\hat{x} - \eps) & x \in [\hat{x} - \delta, \hat{x}) \\
      \diff^* \left( x \right) & x \in [\hat{x}, b]
    \end{cases},
  \end{equation}
where we set $\delta = 2\frac{C_{\max}\left(\rmeas\right)}{C_{\min}\left(\rmeas\right)} \eps = \kappa \eps$ but maintain distinct names to improve transparency of their influence.
We write
\begin{equation}
  R_{\eps}(y)
  =
  \frac{
    \mu\left(\frac{\hat{x} - \delta - a}{\hat{x} - \eps - a}
    \left((\diff^*)^{-1}(y)-a\right) + a\right)
  }{
    \mu\left((\diff^*)^{-1}(y)\right)
  }.
\end{equation}
Then, via the change of the variable formula, the density of $\diff(\cdot, \eps)_{\#} \rmeas$ (w.r.t. $\Leb$) is given by
\begin{equation}
  \rho(y, \eps)
  =
  \begin{cases}
    \frac{\hat{x} - \delta - a}{\hat{x} - \eps - a} R_{\eps}(y) \rho(y)
    & y \in [a, \diff^*(\hat{x} - \eps)) \\
    \frac{ \delta }{\diff^*(\hat{x}) - \diff^*(\hat{x} - \eps)}
    \mu \left( \frac{\delta (y - \diff^*(\hat{x}- \eps))}{\diff^*(\hat{x}) - \diff^*(\hat{x} - \eps)} + \hat{x} - \delta \right)
      &
    y \in [\diff^*(\hat{x} - \eps), \diff^*(\hat{x})) \\
    \rho(y) & y \in [\diff^*(\hat{x}), b]
  \end{cases}.
\end{equation}
We now show that for all $\eps > 0$ small enough,
$F^{\ugen, \rmeas}_{\alpha}(\diff(\cdot, \eps)) - F^{\ugen, \rmeas}_{\alpha}(\diff^*(\cdot))< 0$.
By the definition of $\diff(\cdot, \eps)$,
\begin{align}
  &\frac{1}{\eps} \cdot \left[F^{\ugen, \rmeas}_{\alpha}(\diff(\cdot, \eps)) - F^{\ugen, \rmeas}_{\alpha}(\diff^*(\cdot)) \right] \\
  &=
  \frac{1}{\eps}
   \left[\int \limits_a^b \tfrac{1}{2}\left(\diff(x, \eps) - x\right)^2
  - \diff(x, \eps) \cdot \ugen(x) \D \rmeas(x)
+ \alpha \int \limits_a^b \rho(y, \eps) \log \rho(y, \eps)  \D y  \right] \\
  &\quad - \frac{1}{\eps}
   \left[\int \limits_a^b \tfrac{1}{2}\left(\diff^*(x) - x\right)^2
  - \diff^*(x) \cdot \ugen(x) \D \rmeas(x)
+ \alpha \int \limits_a^b \rho(y) \log \rho(y) \D y \right] \\
  &=
  \underbrace{ \frac{1}{\eps}
    \left[
    \int \limits_a^b \tfrac{1}{2}\left(\diff(x, \eps) - x\right)^2
  - \diff(x, \eps) \cdot \ugen(x)
-   \tfrac{1}{2}\left(\diff^*(x) - x\right)^2
  + \diff^*(x) \cdot \ugen(x) \D \rmeas(x)
\right]}_{\heartsuit} \\
  &\quad +
  \underbrace{\frac{1}{\eps}
    \left[ \alpha
      \int \limits_a^{\diff^*(\hat{x} - \eps)} \left( \frac{\hat{x} - \delta - a}{\hat{x} - \eps - a}  R_{\eps}(y) - 1 \right) \rho(y) \log \rho(y)
      +
  \frac{\hat{x} - \delta - a}{\hat{x} - \eps - a} R_{\eps}(y) \rho(y) \log \left( \frac{\hat{x} - \delta - a}{\hat{x} - \eps - a} R_{\eps}(y) \right) \D y
\right]}_{\clubsuit}\\
    & \quad +
    \underbrace{\frac{1}{\eps}
      \begin{bmatrix}
        \alpha
        \int \limits_{\diff^*(\hat{x} - \eps)}^{\diff^*(\hat{x})}
      \frac{ \delta }{\diff^*(\hat{x}) - \diff^*(\hat{x} - \eps)}
      \mu \left( \frac{\delta (y - \diff^*(\hat{x}- \eps))}{\diff^*(\hat{x}) - \diff^*(\hat{x} - \eps)} + \hat{x} - \delta \right) \qquad \qquad \qquad \qquad \qquad \qquad \qquad \qquad\\
   \times \log \left( \frac{\delta}{\diff^*(\hat{x})
       - \diff^*(\hat{x} - \eps)} \mu \left( \frac{\delta (y - \diff^*(\hat{x}- \eps))}{\diff^*(\hat{x}) - \diff^*(\hat{x} - \eps)} + \hat{x} - \delta \right) \right)
       - \rho(y) \log \rho(y) \D y
      \end{bmatrix}
 }_{\diamondsuit}
\end{align}
We will proceed as follows:
\begin{enumerate}
  \item[(I)] First, we show that $\heartsuit$ is upper bounded for all sufficiently small $\eps$.
  \item[(II)] Next, we show that $\clubsuit$ is upper bounded for all sufficiently small $\eps$.
  \item[(III)] Finally, we show that as $\eps \rightarrow 0$, $\diamondsuit$ tends to $-\infty$, yielding $F^{\ugen, \rmeas}_{\alpha}(\diff(\cdot, \eps)) - F^{\ugen, \rmeas}_{\alpha}(\diff^*(\cdot))< 0$.
\end{enumerate}
\textbf{(Step 1)}
In this step, we claim that $\heartsuit$ is upper bounded.
First, note that
\begin{align}
  \heartsuit
  =\phantom{+}
  \frac{1}{\eps}
  \int \limits_a^b
  \frac{1}{2}\left(\diff(x, \eps) - x\right)^2
  - \frac{1}{2} \left(\diff^*(x) - x\right)^2 \D \rmeas(x)
  +\frac{1}{\eps}
  \int \limits_a^b - \diff(x, \eps) \cdot \ugen(x)
  + \diff^*(x) \cdot  \ugen(x) \D \rmeas(x).
\end{align}
First, expanding all the terms using the definition of $\diff(x, \eps)$, we have
\begin{align}
  &
  \int \limits_a^{\hat{x}} \frac{1}{2}\left(\diff(x, \eps) - x\right)^2 \D \rmeas(x)
  = \quad \int \limits_a^{\hat{x} - \delta}
  \frac{1}{2} \left(\diff(x, \eps) - x\right)^2 \mu(x) \D x
  + \int \limits_{\hat{x} - \delta}^{\hat{x}}
  \frac{1}{2} \left(\diff(x, \eps) - x\right)^2 \mu(x) \D x \\
  &=
  \int \limits_{a}^{\hat{x} - \delta} \frac{1}{2} \left(\diff^* \left( \frac{\hat{x} - \eps - a}{\hat{x} - \delta - a} (x - a) + a \right) - x \right)^2 \mu(x) \D x \\
  & \quad + \int \limits_{\hat{x} - \delta}^{\hat{x}}
  \frac{1}{2} \left( \frac{\diff^*(\hat{x}) - \diff^*(\hat{x} - \eps)}{\delta}
  \left( x - (\hat{x} - \delta) \right) + \diff^*(\hat{x} - \eps) - x\right)^2 \mu(x) \D x.
\end{align}
Then, we compute
\begin{align}
  &\frac{1}{\eps}
  \int \limits_{a}^{\hat{x} - \delta} \frac{1}{2}
  \left(\diff^* \left( \frac{\hat{x} - \eps - a}{\hat{x} - \delta - a} (x - a) + a \right) - x \right)^2  \D\rmeas(x)
  - \frac{1}{\eps} \int \limits_a^{\hat{x}} \frac{1}{2} \left( \diff^*(x) - x \right)^2 \D \rmeas(x) \\
  &= \frac{1}{\eps}
  \int \limits_a^{\hat{x} - \delta}
  \frac{1}{2} \left( \diff^* \left( \frac{\hat{x} - \eps - a}{\hat{x} - \delta - a} (x - a) + a \right) - \diff^*(x) \right)
  \cdot \left( \diff^* \left( \frac{\hat{x} - \eps - a}{\hat{x} - \delta - a} (x - a) + a \right) + \diff^*(x) - 2 x  \right) \D \rmeas(x) \\
  &\quad + \frac{1}{2\eps} \int \limits_{\hat{x} - \delta}^{\hat{x}}
  \left( \diff^*(x) - x \right)^2 \D \rmeas(x) \\
  &\le (b -a) C_{\max}(\rmeas) \int \limits_a^{\hat{x} - \delta}
  \frac{1}{\eps} \left( \diff^* \left( \frac{\hat{x} - \eps - a}{\hat{x} - \delta - a} (x - a) + a \right) - \diff^*(x) \right) \D x + \frac{\kappa}{2} (b - a)^2 C_{\text{max}}(\rmeas).
\end{align}
Therefore,
\begin{align}
  &\limsup_{\eps \to 0}
\frac{1}{\eps}
  \int \limits_{a}^{\hat{x} - \delta} \frac{1}{2}
  \left(\diff^* \left( \frac{\hat{x} - \eps - a}{\hat{x} - \delta - a} (x - a) + a \right) - x \right)^2  \D\rmeas(x)
  - \frac{1}{\eps} \int \limits_a^{\hat{x}} \frac{1}{2} \left( \diff^*(x) - x \right)^2 \D \rmeas(x) \\
  &\le \tfrac{3}{2} \kappa C_{\max}(\rmeas) (b-a)^2.
\end{align}
Next, to obtain the limit of the remaining term, we compute
\begin{align}
  & \limsup_{\eps \to 0} \frac{1}{\eps}
  \int\limits_{\hat{x} - \delta}^{\hat{x}}
  \frac{1}{2} \left(\frac{\diff^*(\hat{x}) - \diff^*(\hat{x} - \eps)}{\delta} (x - (\hat{x} - \delta)) + \diff^*(\hat{x} - \eps) - x\right)^2 \mu(x) \D x \\
  & \le C_{\max}(\rmeas) \limsup_{\eps \to 0} \frac{1}{2\eps}
  \int\limits_{\hat{x} - \delta}^{\hat{x}}
  \left(\frac{\diff^*(\hat{x}) - \diff^*(\hat{x} - \eps)}{\delta} (x - (\hat{x} - \delta)) \right)^2 \\
  &\qquad\qquad\qquad\qquad\qquad \quad + 2 \left(\frac{\diff^*(\hat{x}) - \diff^*(\hat{x} - \eps)}{\delta} (x - (\hat{x} - \delta)) \right)(b - a)
  +(b - a)^2 \D x \\
  &\le \frac{C_{\max}(\rmeas)}{2} \limsup_{\eps \to 0}
   \frac{\delta}{\eps}( b - a )^2
  \le \frac{1}{2}\kappa C_{\max}(\rmeas)  \left( b - a \right)^2.
\end{align}
Overestimating slightly allows us to simplify the above to
\begin{equation} 
  \limsup_{\eps \to 0} \frac{1}{\eps} \int \limits_a^b \frac{1}{2} \left(\diff(x, \eps) - x\right)^2
  - \frac{1}{2} \left(\diff^*(x) - x\right)^2 \D \rmeas(x)
  \le 2 \kappa C_{\max}(\rmeas) (b - a)^2.
\end{equation}
Now, we estimate the term
\begin{align}
  &\frac{1}{\eps}
  \int \limits_a^b  \left( \diff^*(x) - \diff(x, \eps) \right) \cdot \ugen(x) \D \rmeas(x)
  =
  \frac{1}{\eps}
  \int \limits_a^{\hat{x} - \delta}  \left( \diff^*(x) - \diff(x, \eps) \right) \cdot \ugen(x) \mu(x)  \D x \\
  &\qquad \qquad \qquad \qquad \qquad
  \qquad \qquad \qquad \qquad \qquad
  + \frac{1}{\eps}\int \limits_{\hat{x} - \delta}^{\hat{x}}
  \left(\diff^*(x) - \diff(x, \eps) \right) \cdot \ugen(x) \mu(x) \D x.
\end{align}
For the first term, we compute
  \begin{align}
    & \frac{1}{\eps} \int \limits_{a}^{\hat{x} - \delta}  \left(\diff^*\left(\frac{\hat{x} - \eps - a}{\hat{x} - \delta - a} (x - a) + a\right) - \diff^*\left(x \right) \right) \cdot \ugen(x) \mu(x) \D x  \\
    &\le \norm{\ugen}_{L^{\infty}} C_{\max}(\rmeas)
     \int \limits_a^{\hat{x} - \delta}
     \frac{1}{\eps}
\left(\diff^*\left(\frac{\hat{x} - \eps - a}{\hat{x} - \delta - a} (x - a) + a\right) - \diff^*\left(x \right) \right)  \D x.
  \end{align}
By applying a change of variables to the $\diff^*(x)$ parts of the integrals, we can conclude
\begin{align}
    & \limsup_{\eps \to 0} \frac{1}{\eps}
      \int \limits_{a}^{\hat{x} - \delta}  \left(\diff^*\left(\frac{\hat{x} - \eps - a}{\hat{x} - \delta - a} (x - a) + a\right) - \diff^*\left(x \right) \right) \cdot \ugen(x) \mu(x)\D x  \\
    \qquad
\\
    &\le (\kappa - 1) C_{\max}(\rmeas)  (b - a) \norm{\ugen}_{L^{\infty}}
    < \kappa C_{\max}(\rmeas)  (b - a) \norm{\ugen}_{L^{\infty}}.
\end{align}
For the last term,  we have the following estimates
\begin{align}
  &\limsup_{\eps \to 0}
  \frac{1}{\eps}  \int \limits_{\hat{x} - \delta}^{\hat{x}}
  \left(\diff^*(x) - \frac{\diff^*(\hat{x}) - \diff^*(\hat{x} - \eps)}{\delta}
  \left( x - (\hat{x} - \delta) \right) - \diff^*(\hat{x} - \eps) \right)
  \cdot \ugen(x) \D x \\
  &\le
  \limsup_{\eps \to 0}
\frac{C_{\max}(\rmeas) \norm{\ugen}_{L^{\infty}}}{\eps}  \int \limits_{\hat{x} - \delta}^{\hat{x}}
  \abs{\diff^*(x) - \frac{\diff^*(\hat{x}) - \diff^*(\hat{x} - \eps)}{\delta}
  \left( x - (\hat{x} - \delta) \right) - \diff^*(\hat{x} - \eps) }
   \D x \\
  &\le
  \limsup_{\eps \to 0}
  C_{\max}(\rmeas) \norm{\ugen}_{L^{\infty}}
  \left( \frac{1}{\eps} \int\limits_{\hat{x} - \delta}^{\hat{x}}  \frac{\diff^*(\hat{x}) - \diff^*(\hat{x} - \eps)}{\delta}
  \left( x - (\hat{x} - \delta) \right) \D x + \kappa (b - a)\right) \\
  &\le \kappa C_{\max}(\rmeas) \norm{\ugen}_{L^{\infty}} (b - a).
\end{align}
Combining all the estimates above, we have
\begin{align}
  \limsup_{\eps \to 0} \heartsuit
  &\le 2 \kappa C_{\max}(\rmeas) (b-a)^2 + 2 \kappa C_{\max}(\rmeas) (b-a)\norm{\ugen}_{L^\infty}.
\end{align}
\textbf{(Step 2)}
In this step, we show that $\clubsuit$ is upper bounded.
We split $\clubsuit$ into the following two terms:
\begin{align}
  \clubsuit
  &= \underbrace{
  \frac{\alpha}{\eps}
    \int \limits_a^{\diff^*(\hat{x} - \eps)}
      \left( \frac{\hat{x} - \delta - a}{\hat{x} - \eps - a} - 1\right) \rho(y) \log \rho(y)
      +
      \left(\frac{\hat{x} - \delta - a}{\hat{x} - \eps - a} \right) \rho(y) \log \left(\frac{\hat{x} - \delta - a}{\hat{x} - \eps - a} \right) \D y
  }_{\clubsuit_{1}} \\
  &\quad +
  \underbrace{
  \frac{\alpha}{\eps}
\left[
      \begin{multlined}
    \int \limits_a^{\diff^*(\hat{x} - \eps)}
      \frac{\hat{x} - \delta - a}{\hat{x} - \eps - a}\left(R_{\eps}(y) - 1\right) \rho(y) \log \rho(y) \\
      \quad + \frac{\hat{x} - \delta - a}{\hat{x} - \eps - a} \left(R_{\eps}(y) - 1\right) \rho(y) \log \left( \frac{\hat{x} - \delta - a}{\hat{x} - \eps - a} \right) 
      + \frac{\hat{x} - \delta - a}{\hat{x} - \eps - a} R_{\eps}(y)\rho(y)\log R_{\eps}(y) \D y
      \end{multlined}
    \right]}_{\clubsuit_2}
\end{align}
Recalling that $\delta = 2 \frac{C_{\max}\left(\rmeas\right)}{C_{\min}\left(\rmeas\right)}\eps$ and, as $z \to 0$, $\log (1 - z) = -z - O(z^2)$, we upper bound $\clubsuit_1$ as follows:
\begin{align}
  &\frac{1}{\eps}
    \left(
      \int \limits_a^{\diff^*(\hat{x} - \eps)} \left( \frac{\hat{x} - \delta - a}{\hat{x} - \eps - a} - 1 \right) \rho(y) \log \rho(y)
      +
  \left(\frac{\hat{x} - \delta - a}{\hat{x} - \eps - a} \right) \rho(y) 
  \log \left(\frac{\hat{x} - \delta - a}{\hat{x} - \eps - a} \right) \D y
\right)\\
  &=
  \int \limits_a^{\diff^*(\hat{x} - \eps)}
  \left( \frac{1 - 2\frac{C_{\max}(\rmeas)}{C_{\min}(\rmeas)}}{\hat{x} - a - \eps} \right) \rho(y)  \log \rho(y)
  + \left(\frac{\hat{x} - \delta - a}{\hat{x} - \eps - a} \right) \frac{\rho(y)}{\eps} \log \left( 1 + \frac{\left(1 - 2\frac{C_{\max}(\rmeas)}{C_{\min}(\rmeas)}\right)\eps}{\hat{x} - a - \eps} \right) \D y\\
  &=
  \int \limits_a^{\diff^*(\hat{x} - \eps)}
  \left( \frac{1 - 2 \frac{C_{\max}(\rmeas)}{C_{\min}(\rmeas)}}{\hat{x} - a - \eps} \right) \rho(y)  \log \rho(y)
  + \left( \frac{\hat{x} - \delta - a}{\hat{x} - \eps - a} \right) \rho(y) \left( \frac{1 - 2 \frac{C_{\max}(\rmeas)}{C_{\min}(\rmeas)}}{\hat{x} - a - \eps}
  - O(\eps)\right)  \D y.
\end{align}
Therefore,
\begin{align}
   &\limsup_{\eps \to 0}
  \frac{1}{\eps}
    \int \limits_a^{\diff^*(\hat{x} - \eps)} \left( \frac{\hat{x} - \delta - a}{\hat{x} - \eps - a} - 1 \right) \rho(y) \log \rho(y)
      +
    \left( \frac{\hat{x} - \delta - a}{\hat{x} - \eps - a} \right) \rho(y) \log \left( \frac{\hat{x} - \delta - a}{\hat{x} - \eps - a} \right) \D y \\
    &\le \frac{\kappa - 1}{e} \frac{\diff^*(\hat{x}) - a}{\hat{x} - a}.
\end{align}
By assumption on $\hat{x}$, $\hat{x} - a > (b - a) / 2 > (\diff^*(\hat{x}) - a) / 2$.
Thus, $\limsup_{\eps \to 0} \clubsuit_{1} \le 2 \alpha \frac{\kappa - 1}{e}$.
To upper bound $\clubsuit_{2}$, note that $\frac{\hat{x} - \delta - a}{\hat{x} - \eps - a} \leq 1$, 
$R_{\eps}(\cdot) \le C_{\max} (\rmeas) / C_{\min}(\rmeas)$, and
\begin{align}
  \abs{R_{\eps}(y) - 1}
  &=
  \frac{
    \rmeas \left(\frac{\hat{x} - \delta - a}{\hat{x} - \eps - a}
    \left((\diff^*)^{-1}(y)-a\right) + a\right) 
    - \rmeas \left( (\diff^*)^{-1}(y) \right)
  }{
    \rmeas \left((\diff^*)^{-1}(y)\right)
  } 
  \le \frac{\norm{\partial_x \rmeas}_{L^\infty}}{C_{\min}(\rmeas)}
  \abs{ \frac{\delta - \eps}{\hat{x} - \eps - a} \left( (\diff^*)^{-1}(y) - a \right) } \\
  &\le  \frac{\norm{\partial_x \rmeas}_{L^\infty}}{C_{\min}(\rmeas)}
  \cdot (\kappa - 1) \eps 
  \le \frac{\norm{\partial_x \rmeas}_{L^{\infty}}}{C_{\min}(\rmeas)} \left( 2 \frac{C_{\max} (\rmeas)}{C_{\min}(\rmeas)} - 1 \right) \cdot \eps = C_{R} \eps,
\end{align}
where we have used the fact that $a \le y < \diff^*(\hat{x} - \eps)$ and thus $a \le (\diff^*)^{-1}(y) < \hat{x} - \eps$.
Also, using the fact that $|\log(1 + z)| \le 2 |z|$ for $\abs{z} \le 1/2$, we have for all $\eps > 0$ sufficiently small
\begin{align}
  \abs{\log R_{\eps}} 
  &\leq 2 \abs{\frac{\rmeas \left(\frac{\hat{x} - \delta - a}{\hat{x} - \eps - a}
    \left((\diff^*)^{-1}(y)-a\right) + a\right) 
    - \rmeas \left( (\diff^*)^{-1}(y) \right)
  }{
    \rmeas \left((\diff^*)^{-1}(y)\right)
  }} 
  \le 2C_R\eps.
\end{align}
Hence, we have
\begin{align}
  \limsup_{\eps \to 0} \clubsuit_2
  &\le
  \alpha C_R \int_a^b \abs{\rho(y)\log\rho(y)} \D y
  + 2 \alpha C_R \frac{C_{\max}(\rmeas)}{C_{\min}(\rmeas)}.
\end{align}
By optimality of $\diff^*$, using $F_{\alpha}^{\ugen,\rmeas}(\diff^*) \leq F_{\alpha}^{\ugen,\rmeas}(\mathrm{Id})$, $a \leq \diff^* \leq b$, $\norm{\ugen}_{L^\infty} \leq C_{\max}^{\mathrm{vel}}$, and $s \log s \geq -1/e$, we have
$\int_a^b \abs{\rho(y)\log\rho(y)} \D y \leq E_{\rho}$.
Thus,
\begin{equation}
  \limsup_{\eps \to 0} \clubsuit_2 \leq C_{\clubsuit_2}.
\end{equation}
Together with the estimate of $\clubsuit_{1}$,
we have
\begin{equation}
  \limsup_{\eps \to 0} \clubsuit
  \le C_{\clubsuit_2} +
  2 \alpha \frac{\kappa - 1}{e}.
\end{equation}
\textbf{(Step 3)}
In this step, we show that $\diamondsuit$ is sufficiently negative.
First, via a change of variable, note that
\begin{align}
    &\int \limits_{\diff^*(\hat{x} - \eps)}^{\diff^*(\hat{x})}
      \frac{ \delta }{\diff^*(\hat{x}) - \diff^*(\hat{x} - \eps)}
      \mu \left( \frac{\delta (y - \diff^*(\hat{x}- \eps))}{\diff^*(\hat{x}) - \diff^*(\hat{x} - \eps)} + \hat{x} - \delta \right) \\
    & \qquad \times \log \left( \frac{\delta}{\diff^*(\hat{x})
       - \diff^*(\hat{x} - \eps)} \mu \left( \frac{\delta (y - \diff^*(\hat{x}- \eps))}{\diff^*(\hat{x}) - \diff^*(\hat{x} - \eps)} + \hat{x} - \delta \right) \right) \\
    &= \int \limits_{\hat{x} - \delta}^{\hat{x}} \mu(x) \log \left( \frac{\delta}{\diff^*(\hat{x})
       - \diff^*(\hat{x} - \eps)}\mu(x) \right) \D x.
\end{align}
  By the Lebesgue differentiation theorem, we see that
  \begin{equation}
    \frac{1}{\diff^*(\hat{x}) - \diff^*(\hat{x} - \eps)} \int \limits_{\diff^{*}\left(\hat{x} - \eps\right)}^{\diff^{*}(\hat{x})} \rho(y) \D y \to \Lambda
    \textup{ as } \eps \to 0.
  \end{equation}
  Now, applying Jensen's inequality, we have
  \begin{align}
      &\frac{1}{\eps}
       \int \limits_{\hat{x} - \delta}^{\hat{x}} \mu(x) \log \left( \frac{\delta}{\diff^*(\hat{x})
       - \diff^*(\hat{x} - \eps)}\mu(x) \right) \D x -
      \frac{1}{\eps} \int \limits_{\diff^*(\hat{x} - \eps)}^{\diff^*(\hat{x})}
       \rho(y) \log \rho(y)
        \D y \\
      \le &
   \frac{\rmeas[\hat{x} - \delta, \hat{x}]}{\eps} \log \left( \frac{C_{\max} \delta}{\diff^*(\hat{x})
       - \diff^*(\hat{x} - \eps)}\right)
      - \frac{1}{\eps}  \int_{\diff^*(\hat{x} - \eps)}^{\diff^*(\hat{x})}
        \rho(y) \log \rho(y) \D y \\
      \le&
   \frac{\rmeas[\hat{x} - \delta, \hat{x}]}{\eps} \log \left( \frac{C_{\max} \delta}{\diff^*(\hat{x})
       - \diff^*(\hat{x} - \eps)}\right)  \\
    &-
      \frac{(\diff^*(\hat{x}) - \diff^*(\hat{x} - \eps))}{\eps}
      \left( \frac{1}{(\diff^*(\hat{x}) - \diff^*(\hat{x} - \eps))}
      \int \limits_{\diff^*(\hat{x} - \eps)}^{\diff^*(\hat{x})} \rho(y) \D
        y \right)     \!
       \log\! \left( \frac{1}{(\diff^*(\hat{x}) - \diff^*(\hat{x} - \eps))}
       \int \limits_{\diff^*(\hat{x} - \eps)}^{\diff^*(\hat{x})} \rho(y) \D y \right)
\end{align}
Recalling that $\rho = \frac{\dd \diff_{\#}^* \rmeas}{\dd \Leb}$, we see that
\begin{align}
      &\frac{(\diff^*(\hat{x}) - \diff^*(\hat{x} - \eps))}{\eps}
      \left( \frac{1}{(\diff^*(\hat{x}) - \diff^*(\hat{x} - \eps))}
      \int \limits_{\diff^*(\hat{x} - \eps)}^{\diff^*(\hat{x})} \rho(y) \D
        y \right)    \\
      &= \frac{1}{\eps}
      \int \limits_{\diff^*(\hat{x} - \eps)}^{\diff^*(\hat{x})} \rho(y) \D y
      = \frac{1}{\eps} \rmeas \left( (\diff^*)^{-1} [\diff^*(\hat{x} - \eps), \diff^*(\hat{x})] \right)
      = \frac{\rmeas[\hat{x} - \eps, \hat{x}]}{\eps}.
\end{align}
Therefore, we have
\begin{align}
      &\frac{1}{\eps}
       \int \limits_{\hat{x} - \delta}^{\hat{x}} \mu(x) \log \left( \frac{\delta}{\diff^*(\hat{x})
       - \diff^*(\hat{x} - \eps)}\mu(x) \right) \D x -
      \frac{1}{\eps} \int \limits_{\diff^*(\hat{x} - \eps)}^{\diff^*(\hat{x})}
       \rho(y) \log \rho(y)
        \D y \\
    &\le \frac{\rmeas[\hat{x} - \delta, \hat{x}]}{\eps} \log \left( \frac{C_{\max}\left(\rmeas\right) \delta}{\diff^*(\hat{x})
       - \diff^*(\hat{x} - \eps)}\right)
    -
    \frac{\rmeas[\hat{x} - \eps, \hat{x}]}{\eps}
       \cdot
       \log \left( \frac{1}{(\diff^*(\hat{x}) - \diff^*(\hat{x} - \eps))}
       \int \limits_{\diff^*(\hat{x} - \eps)}^{\diff^*(\hat{x})} \rho(y) \D y \right) \\
    &=  \frac{\rmeas[\hat{x} - \delta, \hat{x}]}{\eps}
    \cdot \log \left( \frac{C_{\max} (\rmeas) \delta}{\diff^*(\hat{x})
       - \diff^*(\hat{x} - \eps)}\right)
    -
    \frac{\rmeas[\hat{x} - \eps, \hat{x}]}{\eps}
       \cdot
       \log \left( \frac{\rmeas [\hat{x} - \eps, \hat{x}]}{(\diff^*(\hat{x}) - \diff^*(\hat{x} - \eps))}
       \right) \\
   &= \frac{\rmeas[\hat{x} - \delta, \hat{x}]}{\eps}
   \log \left( \frac{\rmeas [\hat{x} - \eps, \hat{x}]}{(\diff^*(\hat{x}) - \diff^*(\hat{x} - \eps))}\right)
   - \frac{\rmeas[\hat{x} - \eps, \hat{x}]}{\eps}
   \log \left( \frac{\rmeas [\hat{x} - \eps, \hat{x}]}{(\diff^*(\hat{x}) - \diff^*(\hat{x} - \eps))} \right) \\
   &\qquad + \frac{\rmeas[\hat{x} - \delta, \hat{x}]}{\eps}
   \log \left( C_{\max}(\rmeas) \frac{\delta}{\rmeas[\hat{x} - \eps, \hat{x}]} \right) \\
   &= \log \left( \frac{\rmeas [\hat{x} - \eps, \hat{x}]}{(\diff^*(\hat{x}) - \diff^*(\hat{x} - \eps))}\right)
   \cdot \left( \frac{\rmeas[\hat{x} - \delta, \hat{x}]}{\eps} -  \frac{\rmeas[\hat{x} - \eps, \hat{x}]}{\eps}   \right)
    - \frac{\rmeas[\hat{x} - \delta, \hat{x} ]}{\eps}
   \log \left( \frac{\rmeas[\hat{x} - \eps, \hat{x}]}{ \delta  \cdot C_{\max}(\rmeas)} \right).
\end{align}
By the Lebesgue differentiation theorem and $\delta = 2\frac{C_{\max}\left(\rmeas\right)}{C_{\min}\left(\rmeas\right)} \eps$,
we obtain the following lower bound on the second term in the equality above
\begin{align}
  \liminf_{\eps \to 0}
 \left[\frac{\rmeas[\hat{x} - \delta, \hat{x}]}{\eps}
   \log \left( \frac{\rmeas[\hat{x} - \eps, \hat{x}]}{ \delta  \cdot C_{\max}(\rmeas)} \right) \right]
   \ge \kappa C_{\max}(\rmeas) \log \left( \frac{C_{\min}(\rmeas)}{\kappa C_{\max}(\rmeas)} \right).
\end{align}
Also,
\begin{align}
  &\limsup_{\eps \to 0}
  \left[\log \left( \frac{\rmeas [\hat{x} - \eps, \hat{x}]}{(\diff^*(\hat{x}) - \diff^*(\hat{x} - \eps))}\right)
   \cdot \left( \frac{\rmeas[\hat{x} - \delta, \hat{x} ]}{\eps} -  \frac{\rmeas[\hat{x} - \eps, \hat{x}]}{\eps}   \right) \right] \\
  &\le \limsup_{\eps \to 0} C_{\max}\left(\rmeas\right) \left(\frac{C_{\min}(\rmeas)}{C_{\max}(\rmeas)} \frac{\delta}{\eps} - 1\right) \log \Lambda
  = C_{\max}\left(\rmeas\right) \log \Lambda.
\end{align}
The estimates in \textbf{Step 1}, \textbf{Step 2}, and \textbf{Step 3}, imply the existence of $0 < \eps_1 \le 1/4$ such that for all $\eps \le \eps_1$
\begin{equation}
  \heartsuit(\eps) + \clubsuit(\eps) \le \tilde{C} + C_{\clubsuit_2} + \eps_1
\end{equation}
and
\begin{equation}
  \diamondsuit(\eps) \le
\alpha \left(C_{\max}\left(\rmeas\right) \log \Lambda
  -  \kappa C_{\max}(\rmeas) \log \left( \frac{C_{\min}(\rmeas)}{\kappa C_{\max}(\rmeas)} \right) \right) + \eps_1.
\end{equation}
Therefore, for $\eps = \eps_1$,
\begin{align}
  &\frac{1}{\eps} \left( F_{\alpha}^{\ugen, \rmeas}(\diff(\cdot, \eps)) - F_{\alpha}^{\ugen, \rmeas}(\diff^*) \right)
  = \heartsuit(\eps) + \clubsuit(\eps) + \diamondsuit(\eps) \\
  &\le \tilde{C}(a, b, \norm{\ugen}_{L^{\infty}}, \rmeas)
  + C_{\clubsuit_2}
  + \alpha \left(C_{\max}\left(\rmeas\right) \log \Lambda
  - \kappa C_{\max}(\rmeas) \log \left( \frac{C_{\min}(\rmeas)}{\kappa C_{\max}(\rmeas)} \right) \right) + 2 \eps_1 \\
 & \le -1 + 1/2 = -1/2.
\end{align}
Hence, $\diff^*$ is not optimal.
A contradiction.
\end{proof}
\begin{remark}
Note that the proof of \cref{le:rho_lower_bound} also yields the following explicit bound on $\rho$ by tracing the constants above:
\begin{equation}
  \essinf_{y \in [a, b]} \rho(y) 
  \ge \exp\left(
  \frac{1}{\alpha C_{\max}(\rmeas)} 
  \left( - \tilde{C} -  \alpha C_R \left(E_{\rho} + 2\frac{C_{\max}(\rmeas)}{C_{\min}(\rmeas)}\right)
  + 2 \alpha \frac{C_{\max}^2(\rmeas)}{C_{\min}(\rmeas)} \log \left( \frac{1}{2}\left(\frac{C_{\min}(\rmeas)}{C_{\max}(\rmeas)} \right)^2\right)\right) \right),
\end{equation}
where 
$\tilde{C} \defeq
4 \frac{C_{\max}^2(\rmeas)}{C_{\min}(\rmeas)}(b - a)^2
+ 4 \frac{C_{\max}^2(\rmeas)}{C_{\min}(\rmeas)}  (b - a) \norm{\ugen}_{L^{\infty}}
+ 4 \frac{\alpha C_{\max}(\rmeas)}{e C_{\min}(\rmeas)}
$, 
\begin{align}
  C_R &\defeq \frac{\norm{\partial_x \rmeas}_{L^{\infty}}}{C_{\min}(\rmeas)} \left( 2 \frac{C_{\max} (\rmeas)}{C_{\min}(\rmeas)} - 1 \right), \\
  E_{\rho} &\defeq
  \frac{2\max\{\abs{a}, \abs{b}\} C_{\max}^{\mathrm{vel}}}{\alpha}
  + \abs{\Ent(\rmeas)}
  + \frac{2(b-a)}{e}.
\end{align}
\end{remark}
\subsection{Absolute continuity of \texorpdfstring{$\diff^*$}{the minimizer}}
\label{sec:absolute_continuity}
\begin{corollary}
\label{cor:minimizer_absolute_continuity}
Let $\rmeas$ be a probability measure satisfying $\rmeas \ll \mathscr{L}$ with $\mu \coloneqq \frac{\D \rmeas}{\D \Leb} \in W^{1,\infty}$, $0 < \essinf \mu$, $\esssup \mu < \infty$ and let $\ugen \in L^{\infty}$.
Let $\diff^*$ be the minimizer of $F_{\alpha}^{\ugen, \rmeas}$.
Then,
  \begin{enumerate}[label=(\roman*)]
    \item $\diff^*$ is bijective.
    \item $\diff^*$ is Lipschitz continuous and hence absolutely continuous ($\mathrm{a. c.}$) with uniformly bounded derivative.
    \item $\diff^*$ is the unique minimizer of $\hat{F}_{\alpha}^{\ugen, \rmeas}$ in $\Helly_{\mathrm{a.c.}} \coloneqq \{\diff \in \Helly: \diff \textup{ is a.c.}\}$.
 \end{enumerate}
 Thus, $F^{\ugen, \rmeas}_{\alpha}$ (hence $\hat{F}^{\ugen, \rmeas}_{\alpha}$) admits a unique minimizer in $W^{1, \infty}$.
\end{corollary}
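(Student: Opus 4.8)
The plan is to derive all three claims, and the concluding statement, from two facts already in hand: the minimizer $\diff^*$ of $F_{\alpha}^{\ugen,\rmeas}$ is injective by \cref{lem:injective_minimizer} (applicable since $\alpha>0$ and $\frac{\D\rmeas}{\D\Leb}>0$ a.e.), and its pushforward density satisfies $\tilde\rho\defeq\frac{\D\diff^*_{\#}\rmeas}{\D x}\ge C'>0$ a.e.\ for some positive constant $C'$ by \cref{le:rho_lower_bound} (whose hypotheses hold because $\ugen\in L^\infty$ and $0<\essinf\frac{\D\rmeas}{\D\Leb}\le\esssup\frac{\D\rmeas}{\D\Leb}<\infty$).

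The main step is to convert these into a Lipschitz bound on $\diff^*$. Fix $a\le x<y\le b$. Injectivity and monotonicity give $\diff^*(x)<\diff^*(y)$; set $I\defeq(\diff^*(x),\diff^*(y))$. Monotonicity forces $(\diff^*)^{-1}(I)\subseteq(x,y)$, since for $\diff^*(z)\in I$ the case $z\le x$ would give $\diff^*(z)\le\diff^*(x)$ and the case $z\ge y$ would give $\diff^*(z)\ge\diff^*(y)$, both impossible. Hence, by the definition of the pushforward together with the two density bounds,
\begin{equation}
  C'\bigl(\diff^*(y)-\diff^*(x)\bigr)=C'\abs{I}\le\diff^*_{\#}\rmeas(I)=\rmeas\bigl((\diff^*)^{-1}(I)\bigr)\le\rmeas\bigl((x,y)\bigr)\le C_{\max}(\rmeas)\,(y-x),
\end{equation}
so $\diff^*$ is Lipschitz with constant $L\defeq C_{\max}(\rmeas)/C'$. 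Lipschitz continuity yields continuity, absolute continuity, and $0\le\partial_x\diff^*\le L$ a.e.; combined with strict monotonicity (from injectivity) and the boundary conditions $\diff^*(a)=a$, $\diff^*(b)=b$ it shows that $\diff^*$ is a bijection of $[a,b]$. This establishes (1) and (2), and in particular $\diff^*\in W^{1,\infty}$.

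For (3), the key observation is that $\hat{F}_{\alpha}^{\ugen,\rmeas}$ and $F_{\alpha}^{\ugen,\rmeas}$ coincide on $\Helly_{\mathrm{a.c.}}$. On injective a.c.\ maps this is the change-of-variables identity $\alpha\KL(\diff_{\#}\rmeas\parallel\rmeas)=\alpha\int_a^b-\log(\partial_x\diff)\,\D\rmeas$ used in the proof of \cref{thm:uniqueness_minimizers}, together with $\aederiv_x\diff=\partial_x\diff$ for a.c.\ $\diff$. On a non-injective a.c.\ $\diff$, which being monotone must be constant on some nondegenerate interval $[x_1,x_2]$, both functionals equal $+\infty$: the constancy makes $-\log\partial_x\diff=+\infty$ on a set of positive $\rmeas$-measure, while $\diff_{\#}\rmeas$ carries an atom at $\diff(x_1)$ and so is not absolutely continuous with respect to $\rmeas$. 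Now $\diff^*\in\Helly_{\mathrm{a.c.}}$ minimizes $F_{\alpha}^{\ugen,\rmeas}$ over all of $\Helly$, hence it minimizes $\hat{F}_{\alpha}^{\ugen,\rmeas}=F_{\alpha}^{\ugen,\rmeas}$ over $\Helly_{\mathrm{a.c.}}$; and any other minimizer there would, through the same identity, also minimize $F_{\alpha}^{\ugen,\rmeas}$ over $\Helly$, where the minimizer is unique by \cref{thm:uniqueness_minimizers} (using $\essinf\frac{\D\rmeas}{\D\Leb}>0$). Since $\diff^*\in W^{1,\infty}$ and $\Helly\cap W^{1,\infty}\subseteq\Helly_{\mathrm{a.c.}}$, the same argument identifies $\diff^*$ as the unique minimizer of $F_{\alpha}^{\ugen,\rmeas}$, and hence of $\hat{F}_{\alpha}^{\ugen,\rmeas}$, in $W^{1,\infty}$.

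I expect the Lipschitz step to be the crux: $\diff^*$ is a priori only monotone (possibly with jumps or flat parts), so the classical change of variables is unavailable, and it is precisely injectivity that makes the inclusion $(\diff^*)^{-1}(I)\subseteq(x,y)$ and the displayed measure chain valid — and what upgrades the conclusion from mere a.e.\ differentiability to a genuine Lipschitz bound with an explicit constant. Everything after that is bookkeeping between the two functionals and an appeal to the uniqueness statement of \cref{thm:uniqueness_minimizers}.
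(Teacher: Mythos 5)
Your proof is correct, and it takes a genuinely different route for part (2). The paper's argument first shows that $\diff^*$ is continuous (a jump would force $\int_I \tilde\rho = 0$ for some interval $I$, contradicting \cref{le:rho_lower_bound}), then uses the change-of-variables identity $\tilde\rho\circ\diff^* = 1/\partial_x\diff^*$ a.e.\ to conclude that the a.e.\ derivative is bounded, and finally invokes a classical theorem of Saks to upgrade this to absolute continuity. Your proof instead derives a genuine Lipschitz bound directly: injectivity plus monotonicity give $(\diff^*)^{-1}\bigl((\diff^*(x),\diff^*(y))\bigr)\subseteq(x,y)$, and sandwiching $\diff^*_\#\rmeas$ between the lower bound $\tilde\rho\ge C'$ and the upper bound $\esssup\frac{\D\rmeas}{\D\Leb}$ yields $\diff^*(y)-\diff^*(x)\le \frac{C_{\max}(\rmeas)}{C'}(y-x)$. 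This is self-contained — it bypasses the Saks citation entirely and packages continuity, absolute continuity, and the $W^{1,\infty}$ bound in one step — and it also gives an explicit Lipschitz constant. (In fact, as you note in passing, the displayed chain does not even need injectivity: if $\diff^*(x)=\diff^*(y)$ the Lipschitz inequality is trivial, and otherwise the inclusion argument runs unchanged; so the Lipschitz bound follows from the density lower bound alone, with injectivity only needed for surjectivity in (1).) For part (3) you also supply more detail than the paper does, verifying explicitly that $\hat F_\alpha^{\ugen,\rmeas}$ and $F_\alpha^{\ugen,\rmeas}$ agree on $\Helly_{\mathrm{a.c.}}$ by splitting into injective maps (Villani's Jacobian identity) and non-injective ones (both sides are $+\infty$, via a flat segment on one side and an atom in the pushforward on the other); the paper only cites the uniqueness theorem. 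Both routes are sound; yours is a bit longer in (3) but avoids an external reference in (2).
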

\begin{proof}
  Let $\rho$ be as defined in \cref{le:rho_lower_bound}.

  (i) First, we show that $\diff^*$ is continuous.
  Towards a contradiction, suppose that $\diff^*$ has a discontinuity at $x_0 \in [a, b]$.
  Since $\diff^*$ is monotone, $\diff^*$ has a jump discontinuity at $x_0$.
  Then, there exists an interval $I \subseteq [a, b]$ such that $\int_I \rho(y) \D y = 0$,
  contradicting \cref{le:rho_lower_bound}.

  Next, we show that $\diff^*$ is bijective.
  The injectivity of $\diff^*$ is already proved in \cref{le:injective_minimizer}.
  Since $\diff^*(a) = a$ and $\diff^*(b) = b$ and the image of an interval under a continuous map remains an interval,
  we see that $\diff^*([a, b]) = [a, b]$, implying that $\diff^*$ is surjective.
  Therefore, $\diff^*$ is bijective.

  (ii) For any $a \leq x_1 < x_2 \leq b$, we have $\diff^*_{\#}\rmeas([\diff^*(x_1), \diff^*(x_2)]) < (x_2 - x_1) \esssup \rmeas $.
  Therefore, we have $\essinf_{y \in [\diff^*(x_1), \diff^*(x_2)]} \rho(y) \le \frac{x_2 - x_1}{\diff^*(x_2) -  \diff^*(x_1)}  \esssup \rmeas$. 
  Thus, by generality of $x_1, x_2$, $\diff^*$ is $\frac{\esssup \rmeas}{\essinf \rho}$-Lipschitz.
  The latter is finite by \cref{le:rho_lower_bound}.

  (iii) Uniqueness of $\diff^*$ in $\Helly_{\mathrm{a.c.}}$ follows from \cref{thm:uniqueness_minimizers}.
\end{proof}
\subsection{Lower bound on \texorpdfstring{$\partial_x \diff^*$}{the derivative of the minimizer}}
\label{sec:lower_bound_derivative}
Having established the weak differentiability of $\diff^*$ and thus minimization of $\hat{F}_{\alpha}^{\ugen, \rmeas}$, we aim to further analyze the regularity of $\diff^*$.
\begin{lemma}\label[lemma]{le:minimizer_lower_bound}
  Let $\diff^*$ be the minimizer of $F_{\alpha}^{\ugen, \rmeas}(\cdot)$ in $\Helly_{\textup{a.c.}}$ with $\mu \coloneqq \frac{\D \rmeas}{\D \Leb} \in W^{1,\infty}$, $0 < \essinf \mu \eqcolon C_{\min}(\rmeas)$ and $C_{\max}(\rmeas) \coloneq \esssup \mu < \infty$.
  Then, for $\norm{\ugen}_{L^{2}} \le \tilde{C}_{\max}^{\textup{vel}}$ there exists a $C_{\textup{low}}(\tilde{C}_{\max}^{\textup{vel}}, a, b, \alpha, C_{\min}(\rmeas), C_{\max}(\rmeas))$ satisfying
  \begin{equation}
    0 < C_{\mathrm{low}}(\tilde{C}_{\max}^{\textup{vel}}, a, b, \alpha, C_{\min}(\rmeas), C_{\max}(\rmeas)) \le
    \essinf_{x \in [a, b]} \partial_x \diff^*.
  \end{equation}
\end{lemma}
\begin{proof}
  Write $\mu(x) \coloneqq \frac{\D \rmeas}{\D \Leb}(x)$.
  Toward a contradiction, suppose that $\partial_x \diff^*$ is not almost everywhere uniformly lower bounded away from $0$ on $[a, b]$ and hence the same holds for $\frac{\partial_x \diff^*}{\mu}$.
  For any $\Lambda \ge 1$, define the sets
  \begin{equation}
    A = \left\{ \partial_x \diff^* \le \frac{\mu}{\Lambda} \right\}, \,
    B = \left\{ \partial_x \diff^* \ge \frac{2\mu}{\Lambda} \right\}, \,
    C = \left\{ \frac{\mu}{\Lambda} <  \partial_x \diff^* < \frac{2\mu}{\Lambda} \right\}.
  \end{equation}
  By assumption, $\mathscr{L}(A) > 0$.
  We claim that for all $\Lambda \ge 5 C_{\max}(\rmeas)$, the set $B$ has positive measure.
  Toward a contradiction, suppose that there exists $\Lambda \ge 5 C_{\max}(\rmeas)$ for which $B$ has measure zero.
  Since $\diff^*$ is absolutely continuous, we know that $\int_a^b \partial_x \diff^*(x) \D x = b - a$.
  Then, we have
  \begin{equation}
    \int \limits_a^b \partial_x \diff^*(x) \D x
    = \int \limits_A \partial_x \diff^*(x) \D x
      + \int \limits_C \partial_x \diff^*(x) \D x
    \le \frac{2 \rmeas([a, b])}{\Lambda}
    \le (b - a) \cdot \frac{2}{5} < (b - a) = \int \limits_a^b \partial_x \diff^*(x) \D x.
  \end{equation}
  A contradiction.

  Choose $\Lambda >
    \max \left\{\frac{2}{\alpha} \left( \norm{\diff^* - \mathrm{Id}}_{L^2(\rmeas)} + \norm{\ugen}_{L^2(\rmeas)} \right), 5 C_{\max}(\rmeas)\right\}.$
  We define $f\colon [a, b] \to \R$ by
  \begin{equation}
    f \colon x \mapsto \int \limits_a^x \frac{\rmeas(B)}{\rmeas(A)} \cdot \frac{1}{\Lambda}  \1_A(s) - \frac{1}{\Lambda} \1_B(s) \D \rmeas(s).
  \end{equation}
  Then, note that $f$ is absolutely continuous and $f(a) = f(b) = 0$.
  Now, for all sufficiently small $\eps > 0$, note that $\diff^* + \eps f \in \Helly_{\mathrm{a.c.}}$.
  Indeed, for any $a \le x_1 < x_2 \le b$,
  \begin{align}
    \left(\diff^* + \eps f \right)(x_2)
    - \left(\diff^* + \eps f \right)(x_1)
    &= \int \limits_{x_1}^{x_2} \partial_x \diff^* \D x + \eps \int \limits_{x_1}^{x_2} \partial_x f \D x
    = \int \limits_{x_1}^{x_2} \partial_x \diff^* + \eps \partial_x f \D x
    \ge 0.
  \end{align}
  Next, differentiating under the integral, we compute
  \begin{align}
    \frac{\D}{\D \eps}\bigg|_{\eps = 0} F_{\alpha}^{\ugen, \rmeas} (\diff^* + \eps f)
    &= \int \limits_a^b f \cdot  \left( \diff^* - \mathrm{Id} - \ugen \right) \D \rmeas(x)
    - \alpha \int\limits_a^b \frac{\partial_x f}{\partial_x \diff^*} \D \rmeas(x) \\
    &= \int \limits_a^b f \cdot (\diff^* - \mathrm{Id} - \ugen) \D \rmeas(x)
    - \alpha
    \left( \int \limits_A \frac{\partial_x f}{\partial_x \diff^*} \D \rmeas(x)+
    \int\limits_B \frac{\partial_x f}{\partial_x \diff^*} \D \rmeas(x) \right)  \\
    &\le \int \limits_a^b f \cdot (\diff^* - \mathrm{Id} - \ugen) \D \rmeas(x)
    - \alpha \left( \Lambda \rmeas(B)  \frac{1}{\Lambda} - \frac{\Lambda}{2} \frac{1}{\Lambda} \rmeas(B) \right) \\
    &\le \norm{\diff^* - \mathrm{Id} - \ugen}_{L^2(\rmeas)} \cdot \left( \int \limits_a^b f^2 \D \rmeas \right)^{1/2}
    - \frac{\alpha}{2} \cdot \rmeas(B) \\
    &\le \left( \norm{\diff^* - \mathrm{Id}}_{L^2(\rmeas)} + \norm{\ugen}_{L^2(\rmeas)} \right) \cdot   \frac{\rmeas(B)}{\Lambda} - \frac{\alpha}{2} \rmeas(B).
  \end{align}
  By choice of $\Lambda$,
  $\frac{\D}{\D \eps}\big|_{\eps = 0} F_{\alpha}^{\ugen, \rmeas} (\diff^* + \eps f) < 0$.
  Thus, there exists $\eps > 0$ such that
    $F_{\alpha}^{\ugen, \rmeas}(\diff^* + \eps f) < F_{\alpha}^{\ugen, \rmeas}(\diff^*)$,
  contradicting the optimality of $\diff^*$.
  Therefore, $\partial_x \diff^*$ must be uniformly lower bounded by a constant $C_{\text{low}}$.
  In fact, the above argument produces an explicit lower bound on $\partial_x \diff^*$, namely,
  \begin{align}
    \essinf_{x \in [a, b]} \partial_x \diff^*(x)
    &\ge
  \min \left\{ \frac{\alpha C_{\min}(\rmeas)}{2 \left( \norm{\diff^* - \mathrm{Id}}_{L^2(\rmeas)} +  \norm{\ugen}_{L^2(\rmeas)} \right)}, \frac{C_{\min}(\rmeas)}{5 C_{\max}(\rmeas)} \right\} \\
    &\ge
    \min \left\{ \frac{\alpha C_{\min}(\rmeas)}{2 \left( (b - a) +  \norm{\ugen}_{L^2(\rmeas)} \right)}, \frac{C_{\min}(\rmeas)}{5 C_{\max}(\rmeas)} \right\}.
  \end{align}
  This concludes the proof.
\end{proof}
\subsection{Higher-order regularity}
\label{sec:higher_order_regularity}
We prove higher regularity of $ \diff^*$ by a bootstrapping argument, based on the following optimality condition.
\begin{lemma}\label[lemma]{lem:optimality_condition}
  For $\mu \coloneqq \frac{\D \rmeas}{\D \Leb} \in W^{1,\infty}$ with $0 < \essinf \mu \leq \esssup \mu < \infty$ and $\ugen \in L^{\infty}$, $\diff^*$ is the unique element of $\helly \cap W^{1, \infty}$ satisfying, for all $\varphi \in C^{\infty}_{0}$,
  \begin{equation}
    \int \limits_a^b \varphi(x) \diff^*(x)  - \alpha \frac{\partial_x \varphi }{\partial_x \diff^*}\D \rmeas(x)
    =  \int \limits_a^b \varphi(x) \left( x + \ugen(x)\right) \D \rmeas(x).
  \end{equation}
\end{lemma}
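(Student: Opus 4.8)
The plan is to identify the displayed equation as the Euler--Lagrange (first-order optimality) condition for the minimization of $\hat{F}_{\alpha}^{\ugen, \rmeas}$ over $\Helly_{\mathrm{a.c.}}$. Throughout I use that, by \cref{sec:absolute_continuity} and \cref{le:minimizer_lower_bound}, the minimizer $\diff^*$ lies in $\helly \cap W^{1,\infty}$ with $C_{\mathrm{low}} \le \partial_x \diff^* \le C_{\mathrm{up}}$ almost everywhere for some $0 < C_{\mathrm{low}} \le C_{\mathrm{up}} < \infty$, and that $\diff^*$ is the \emph{unique} minimizer of $\hat{F}_{\alpha}^{\ugen, \rmeas}$ in $\Helly_{\mathrm{a.c.}}$. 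I establish that $\diff^*$ satisfies the identity (existence) and that it is the only element of $\helly \cap W^{1,\infty}$ that does (uniqueness).

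\emph{Existence.} Fix $\varphi \in C^{\infty}_{0}$. Since $\varphi$ vanishes near $a$ and $b$, $\diff^* + \eps \varphi$ retains the boundary values $a, b$ for every $\eps$, while $\partial_x(\diff^* + \eps \varphi) = \partial_x \diff^* + \eps \partial_x \varphi \ge C_{\mathrm{low}} - \abs{\eps}\,\norm{\partial_x \varphi}_{L^\infty}$ is positive almost everywhere once $\abs{\eps}$ is small; an absolutely continuous function with a.e.\ positive derivative is strictly increasing, so $\diff^* + \eps \varphi \in \helly \cap W^{1,\infty} \subseteq \Helly_{\mathrm{a.c.}}$ for all sufficiently small $\eps$. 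Hence $g(\eps) \coloneqq \hat{F}_{\alpha}^{\ugen, \rmeas}(\diff^* + \eps \varphi)$ attains its minimum at $\eps = 0$. For $\abs{\eps} \le C_{\mathrm{low}} / \big(2(1 + \norm{\partial_x \varphi}_{L^\infty})\big)$ one has $\partial_x \diff^* + \eps \partial_x \varphi \ge C_{\mathrm{low}}/2$, so the $\eps$-derivative of the integrand of $\hat{F}_{\alpha}^{\ugen, \rmeas}(\diff^* + \eps\varphi)$, namely
\begin{equation}
  \big(\diff^*(x) + \eps \varphi(x) - x\big)\,\varphi(x) - \alpha\,\frac{\partial_x \varphi(x)}{\partial_x \diff^*(x) + \eps \partial_x \varphi(x)} - \ugen(x)\,\varphi(x),
\end{equation}
is bounded uniformly in $\eps$ by a fixed constant (using $\ugen \in L^\infty$, $a \le \diff^* \le b$, and $\varphi, \partial_x \varphi \in L^\infty$), which is $\rmeas$-integrable since $\rmeas$ is finite. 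The dominated convergence theorem therefore permits differentiating $g$ under the integral sign, and $g'(0) = 0$ gives
\begin{equation}
  \int \limits_a^b \big(\diff^*(x) - x\big)\,\varphi(x) - \alpha\,\frac{\partial_x \varphi}{\partial_x \diff^*} - \ugen(x)\,\varphi(x) \D \rmeas(x) = 0,
\end{equation}
which rearranges to the asserted identity.

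\emph{Uniqueness.} Let $\diff_1, \diff_2 \in \helly \cap W^{1,\infty}$ both satisfy the identity. Both have a.e.\ bounded derivatives (by membership in $W^{1,\infty}$) that are also bounded below away from zero --- the latter, as for $\diff^*$, being forced by the identity through the variational argument of \cref{le:minimizer_lower_bound} --- so $1/\partial_x \diff_1, 1/\partial_x \diff_2 \in L^\infty$. Subtracting the two identities gives, for all $\varphi \in C^{\infty}_{0}$,
\begin{equation}
  \int \limits_a^b \varphi\,(\diff_1 - \diff_2) - \alpha\,\partial_x \varphi \left( \frac{1}{\partial_x \diff_1} - \frac{1}{\partial_x \diff_2} \right) \D \rmeas = 0.
\end{equation}
The function $h \coloneqq \diff_1 - \diff_2$ is Lipschitz with $h(a) = h(b) = 0$. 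Approximating $h$ by $\varphi_n \in C^{\infty}_{0}$ (extend $h$ by zero, mollify, correct the boundary values by a vanishing affine function, and cut off near $a, b$) so that $\varphi_n \to h$ uniformly and $\partial_x \varphi_n \to \partial_x h$ in $L^1$, the two terms pass to the limit (the first by uniform convergence and finiteness of $\rmeas$; the second by $L^1$ convergence of $\partial_x \varphi_n$ against the $L^\infty$ weight $(1/\partial_x \diff_1 - 1/\partial_x \diff_2)\,\tfrac{\D \rmeas}{\D \Leb}$), so we may take $\varphi = h$:
\begin{equation}
  \int \limits_a^b (\diff_1 - \diff_2)^2 \D \rmeas
  = \alpha \int \limits_a^b (\partial_x \diff_1 - \partial_x \diff_2) \left( \frac{1}{\partial_x \diff_1} - \frac{1}{\partial_x \diff_2} \right) \D \rmeas
  = - \alpha \int \limits_a^b \frac{(\partial_x \diff_1 - \partial_x \diff_2)^2}{\partial_x \diff_1\,\partial_x \diff_2} \D \rmeas \le 0.
\end{equation}
Since the left-hand side is nonnegative, both sides vanish, whence $\diff_1 = \diff_2$ and $\partial_x \diff_1 = \partial_x \diff_2$ $\rmeas$-a.e.; as $\rmeas$ and $\Leb$ are mutually absolutely continuous, $\partial_x \diff_1 = \partial_x \diff_2$ a.e., and combined with $\diff_1(a) = \diff_2(a)$ this yields $\diff_1 \equiv \diff_2$. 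In particular, the unique solution is $\diff^*$.

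\emph{Main obstacle.} The delicate points are the admissibility of the perturbations and the approximation arguments for the singular term. In the existence part one must keep $\diff^* + \eps \varphi$ inside $\helly$, which is precisely where the uniform lower bound $C_{\mathrm{low}}$ on $\partial_x \diff^*$ from \cref{le:minimizer_lower_bound} is indispensable, and the same bound is what tames the $1/\partial_x \diff^*$ term when differentiating under the integral. In the uniqueness part one must legitimately test the weak formulation against the non-smooth direction $\diff_1 - \diff_2$, which requires both the lower bound on the derivatives (so that $1/\partial_x \diff$ is bounded) and the mutual absolute continuity of $\rmeas$ and $\Leb$ (so that $\rmeas$-a.e.\ equality of derivatives upgrades to $\Leb$-a.e.\ equality).
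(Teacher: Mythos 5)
Your existence argument is essentially the same as the paper's: perturb the minimizer by $\eps\varphi$, use the uniform lower bound on $\partial_x\diff^*$ from \cref{le:minimizer_lower_bound} to keep the perturbed map admissible and to control the $1/\partial_x\diff$ term, differentiate under the integral by dominated convergence, and conclude $g'(0)=0$. (The paper only considers $\eps>0$ but compensates by the arbitrariness of $\varphi$; your symmetric treatment of $\pm\eps$ is slightly cleaner.) Where you genuinely diverge is the uniqueness half: the paper's proof stops after establishing that $\diff^*$ satisfies the displayed identity and leaves uniqueness implicit --- presumably resting on convexity of the functional plus the already-proved uniqueness of minimizers in \cref{thm:uniqueness_minimizers} --- whereas you supply an explicit argument by subtracting the identities for two putative solutions, testing against (a mollification of) their difference, and exploiting the monotonicity of $z\mapsto -1/z$. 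That addition is welcome and in the spirit of the lemma's wording.

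One point in the uniqueness step deserves repair. You assert that a \emph{second} weak solution $\diff_2\in\helly\cap W^{1,\infty}$ inherits an a.e.\ lower bound on $\partial_x\diff_2$ ``through the variational argument of \cref{le:minimizer_lower_bound},'' but that lemma's competitor construction is tied to the minimizer of $F_\alpha^{\ugen,\rmeas}$; for a general weak solution it does not apply as stated. What actually gives the bound you need is the representation formula logic of \cref{lem:representation_of_minimizer}: the weak identity alone implies that $\rmeas(\cdot)/\partial_x\diff_2$ has a weak derivative given by $\frac{1}{\alpha}(\ugen-(\diff_2-\mathrm{Id}))\rmeas$, so $1/\partial_x\diff_2$ equals an explicit antiderivative and is therefore in $L^\infty$ (using the standing bounds on $\rmeas$), which is exactly what your mollification/passage-to-the-limit step requires. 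With that citation corrected, your uniqueness argument is sound; alternatively you could simply invoke convexity of $\hat F_\alpha^{\ugen,\rmeas}$ on $\Helly_{\mathrm{a.c.}}$ (so that any weak solution is a global minimizer) together with the uniqueness of minimizers already established, which is likely what the authors had in mind.
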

\begin{proof}
  Let $\varphi \in C^{\infty}_{0}([a, b])$ be general.
  By \cref{le:minimizer_lower_bound,cor:minimizer_absolute_continuity}, we have $\diff^*, \diff^* + \eps \varphi \in \helly \cap W^{1, \infty}$ for all sufficiently small $\eps > 0$.
  By optimality of $\diff^*$, we have
  \begin{align}
  0 \leq &\int \limits_{a}^{b} \frac{\left(\left(\diff^*(x) + \eps \varphi(x)\right) - x\right)^2}{2} - \alpha \log\left(\partial_x \diff^* + \eps \partial_x \varphi\right) - \left(\diff^*(x) + \eps \varphi(x)\right) \ugen(x) \D \rmeas(x) \\
  -&\int \limits_{a}^{b} \frac{\left(\left(\diff^*(x)\right) - x\right)^2}{2} - \alpha \log\left(\partial_x \diff^* \right) - \diff^*(x) \ugen(x) \D \rmeas(x).
  \end{align}
  We can estimate
  \begin{equation}
    \frac{\norm{\left(\diff^* + \eps \varphi\right) - \mathrm{Id}}_{L^2(\rmeas)}^2}{2} - \frac{\norm{\left(\diff^*\right) - \mathrm{Id}}_{L^2(\rmeas)}^2}{2} = \eps \inprod{ \varphi , \diff^* - \mathrm{Id} }_{L^2(\rmeas)} + \eps^2 \frac{1}{2} \norm{\varphi}_{L^2(\rmeas)}^2
  \end{equation}
  and, by Taylor expansion of $s \mapsto \log(s)$,
  \begin{equation}
    \left\|\log\left(\partial_x \diff^* + \eps \partial_x \varphi\right) - \log\left(\partial_x \diff^*\right) - \eps \frac{\partial_x \varphi}{\partial_x \diff^*} \right\|_{L^2(\rmeas)}
    \leq  \eps^2 C(\sup \partial_x \varphi, \inf \partial_x \diff^*).
  \end{equation}
  Thus, by the Dominated Convergence Theorem, we see that
  \begin{align}
    \lim_{\eps \to 0} \frac{1}{\eps} &\left[
      \begin{multlined}
     \int \limits_{a}^{b} \frac{\left(\left(\diff^*(x) + \eps \varphi(x)\right) - x\right)^2}{2} - \alpha \log\left(\partial_x \diff^* + \eps \partial_x \varphi\right) - \left(\diff^*(x) + \eps \varphi(x)\right) \ugen(x) \D \rmeas(x) \\
      -\int \limits_{a}^{b} \frac{\left(\left(\diff^*(x)\right) - x\right)^2}{2} - \alpha \log\left(\partial_x \diff^* \right) - \diff^*(x) \ugen(x) \D \rmeas(x)
      \end{multlined}
    \right] \\
    =&
    \int \limits_a^b \varphi(x) \diff^*(x)  - \alpha \frac{\partial_x \varphi }{\partial_x \diff^*}\D \rmeas(x)
    -  \int \limits_a^b \varphi(x) \left( x + \ugen(x)\right) \D \rmeas(x)
    = 0.
  \end{align}
\end{proof}
\begin{lemma} \label[lemma]{lem:representation_of_minimizer}
  Under the assumptions of \cref{lem:optimality_condition}, there exists a constant $c_{\ugen} = c_{\ugen}(\alpha, \diff^*, \ugen, \rmeas)$ such that
  \begin{equation}
    \frac{1}{\partial_x \diff^*(x)} = \frac{c_{\ugen}}{\mu(x)} + \frac{1}{\alpha \mu(x)} \int \limits_{a}^{x} \ugen(s) - \left(\diff^*(s) - s \right) \D \rmeas(s)
    \quad \textup{a.e.}
  \end{equation}
\end{lemma}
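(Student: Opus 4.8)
The plan is to read the optimality condition of \cref{lem:optimality_condition} as a distributional differential equation for the function $w(x) \coloneqq \alpha\,\rmeas(x)/\partial_x\diff^*(x)$ --- writing, as in \cref{le:rho_lower_bound}, $\rmeas = \frac{\D\rmeas}{\D\Leb}$ for the density --- and to integrate it once. Using $\D\rmeas(x) = \rmeas(x)\,\D x$ and rearranging the identity of \cref{lem:optimality_condition}, for every $\varphi \in C^\infty_0([a,b])$ one has
\begin{equation*}
  \alpha \int \limits_a^b \varphi'(x)\, \frac{\rmeas(x)}{\partial_x\diff^*(x)}\, \D x
  = \int \limits_a^b \varphi(x)\, \bigl(\diff^*(x) - x - \ugen(x)\bigr)\, \rmeas(x)\, \D x ,
\end{equation*}
which is precisely the assertion that $w$ has weak derivative $w' = \bigl(\ugen - (\diff^* - \mathrm{Id})\bigr)\,\rmeas$ on $(a,b)$.

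The next step is to supply the regularity that makes the fundamental theorem of calculus applicable. By \cref{le:minimizer_lower_bound} --- whose hypothesis holds since $\ugen \in L^\infty \subset L^2$ on the bounded interval $[a,b]$ --- the derivative $\partial_x\diff^*$ is bounded below by a positive constant; hence $1/\partial_x\diff^*$ is bounded and $w \in L^1([a,b])$, as $\rmeas \in L^1$. The weak derivative $\bigl(\ugen - (\diff^* - \mathrm{Id})\bigr)\rmeas$ also lies in $L^1$, because $\ugen \in L^\infty$, $\diff^*$ takes values in $[a,b]$, and $\rmeas \in L^1$. Therefore $w \in W^{1,1}(a,b)$, so it admits an absolutely continuous representative, and integrating from $b$ to $x$ gives
\begin{equation*}
  \frac{\alpha\,\rmeas(x)}{\partial_x\diff^*(x)}
  = \frac{\alpha\,\rmeas(b)}{\partial_x\diff^*(b)}
  + \int \limits_b^x \ugen(s) - \bigl(\diff^*(s) - s\bigr)\, \D\rmeas(s)
  \qquad \text{for a.e.\ } x \in [a,b].
\end{equation*}

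Dividing by $\alpha\,\rmeas(x)$ --- admissible for a.e.\ $x$ under the positivity of the density in force here --- yields the claimed representation of $1/\partial_x\diff^*$, with $c_{\ugen}(\alpha,\diff^*,\ugen)$ the constant carrying the boundary contribution at $x=b$. The only step with genuine content is the integrability of $w$: it is exactly the lower bound on $\partial_x\diff^*$ from \cref{le:minimizer_lower_bound} that closes the weak-derivative argument, since without it $w$ need not belong to $L^1$. Everything else is a direct rearrangement of the Euler--Lagrange identity followed by a single integration, so I expect no further obstacle.
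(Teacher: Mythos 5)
Your argument is, in substance, the same one the paper uses: rearrange the Euler--Lagrange identity of \cref{lem:optimality_condition} into a statement that $w \coloneqq \alpha\,\rmeas/\partial_x\diff^*$ has a prescribed weak derivative, then integrate. Your version is a bit more careful than the paper's --- the paper jumps from ``by the uniqueness of weak derivative'' to the integral formula without explicitly recording that $w\in W^{1,1}$ (so that it has an absolutely continuous representative and the fundamental theorem of calculus applies); you supply exactly that, correctly invoking the lower bound from \cref{le:minimizer_lower_bound} to get $1/\partial_x\diff^*\in L^\infty$ and hence $w\in L^1$.

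One small caveat you should flag: after dividing the integrated identity by $\alpha\,\rmeas(x)$, the boundary term is $\rmeas(b)/\bigl(\rmeas(x)\,\partial_x\diff^*(b)\bigr)$, which is \emph{not} $x$-independent unless $\rmeas$ is constant. So identifying it with a constant $c_{\ugen}$, as you do in your last sentence (and as the paper's statement does), is not quite right. The identity your integration actually delivers is cleanest left as
\begin{equation}
  \frac{\rmeas(x)}{\partial_x\diff^*(x)} \;=\; c' \;+\; \frac{1}{\alpha}\int_b^x \ugen(s) - \bigl(\diff^*(s)-s\bigr)\,\D\rmeas(s),
\end{equation}
with $c'=\rmeas(b)/\partial_x\diff^*(b)$ genuinely constant; dividing through gives $\tfrac{1}{\partial_x\diff^*}=\tfrac{1}{\rmeas(x)}\bigl(c'+\tfrac1\alpha\int_b^x\cdots\bigr)$, i.e.\ the constant should sit inside the $\tfrac{1}{\rmeas(x)}$ prefactor, not outside it. This is an inaccuracy in the paper's stated formula that you have inherited rather than introduced, and it does not affect the downstream regularity bootstrap (since $\rmeas\in W^{k+1,\infty}$ with a positive lower bound, $c'/\rmeas(x)$ has the same Sobolev regularity), but the constancy claim as written is false whenever $\rmeas$ is non-constant, and a careful proof should note it.
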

\begin{proof}
  By \cref{lem:optimality_condition}, we see that for any $\varphi \in C_0^{\infty}$,
  \begin{equation}
    \int \limits_a^b \frac{\partial_x \varphi(x)}{\partial_x \diff^*(x)} \mu(x) \D x
    =
    - \frac{1}{\alpha}
    \int \limits_a^b \varphi(x) \cdot \left( \ugen(x) - \left( \diff^*(x) - x \right) \right)
    \mu(x) \D x.
  \end{equation}
  Therefore, by the definition of weak derivative, we see that
  \begin{equation}
    \partial_x \left(\frac{\mu(x)}{\partial_x \diff^*(x)} \right) =
    \frac{1}{\alpha} \left( \ugen(x) - (\diff^*(x) - x) \right) \mu(x).
  \end{equation}
  By the uniqueness of weak derivative, we therefore have
  \begin{equation}
    \frac{1}{\partial_x \diff^*(x)} = \frac{c(\ugen, \alpha, \diff^*, \rmeas)}{\mu(x)} + \frac{1}{\alpha \mu(x)} \int \limits_{a}^x
    \ugen(s) - \left( \diff^*(s) - s \right) \D \rmeas(s) \textup{ a.e.}
  \end{equation}
  for some constant $c(\ugen, \alpha, \diff^*, \rmeas)$ depending only on $\ugen$, $\alpha$, $\diff^*$, and $\rmeas$.
\end{proof}
This yields higher regularity of $\diff^*$ as follows.
\begin{lemma}
  \label[lemma]{lem:higher_regularity}
  Under the assumptions of \cref{lem:optimality_condition}, let $\diff^*$ be the minimizer of $F_{\alpha}^{\ugen,\rmeas}(\cdot)$ in $\Helly_{\mathrm{a.c.}}$.
  Let $k \geq 0$ and suppose additionally that $\ugen \in W^{k, \infty}$ and $\mu \in W^{k + 1, \infty}$.
  Then, we have $\diff^* \in W^{k + 2, \infty}$.
  An analogous result holds with $W^{k, \infty}, W^{k + 1, \infty}, W^{k + 2, \infty}$ replaced by $C^k, C^{k + 1}, C^{k + 2}$.
\end{lemma}
\begin{proof}
  By \cref{lem:representation_of_minimizer} and the fact that $\abs{\diff^*}$ and $\mu(\cdot)$ are uniformly bounded, we see that
  \begin{equation}
    \label{eq:representation_of_partial_diff}
    \partial_x \diff^* (x)
    =
    \frac{\mu(x)}{c(\ugen, \alpha, \diff^*, \rmeas) + \frac{1}{\alpha} \int \limits_a^x \ugen(s) - \left( \diff^*(s) - s \right) \D \rmeas(s)}
  \end{equation}
  By \cref{cor:minimizer_absolute_continuity,le:minimizer_lower_bound}, both $\partial_x\diff^*$ and $1/\partial_x\diff^*$ are bounded, so the denominator in \eqref{eq:representation_of_partial_diff} is bounded away from zero.
  By $\ugen \in L^{\infty}$, $\mu(\cdot) \in W^{1, \infty}$, and $\diff^* \in W^{1, \infty}$, we have $\partial_x \diff^* \in W^{1, \infty}$.
  Hence, $\diff^* \in W^{2, \infty}$.
  Iterating this argument yields the desired result. The proof for $C^k, C^{k+1}, C^{k + 2}$ is analogous.
\end{proof}
We note that for the functional $f^{t u_0}_{\alpha}$ in \cref{def:variational_solution} to be represented by a $W^{k, \infty}$ function, we require that $u_0 \in W^{k + 2, \infty}$.
Thus, our higher regularity result implies that in this case, the regularity of the deformation $\diff^{*}$ matches that of the initial velocity field $u_0$ for sufficiently regular $\rmeas$.

\section{Stability, time derivatives, and Eulerian solutions}
\label{sec:eulerian}
\subsection*{Overview}
In this section we prove the stability and differentiability of minimizers of $F_{\alpha}^{\ugen, \rmeas}$ with respect to the problem data, in higher-order Sobolev norms. 
This allows establishing the existence of smooth Lagrangian and Eulerian solutions of pressureless IGR.
In \Cref{sec:higher_order_stability}, we prove the stability of minimizers in $W^{k, \infty}$ and $C^k$.
\Cref{sec:time_derivative} shows the differentiability of minimizers in these norms, allowing us to prove the existence of time derivatives.
\Cref{sec:first_order_pde_solution} uses these results to show that the map $t \mapsto \diff_t$ satisfies the Lagrangian pressureless IGR equation \cref{eqn:lagrangian_eqn} and concludes the existence of smooth solutions to the Eulerian pressureless IGR equation \cref{eqn:1d-igr-pressureless}.
\subsection{Stability of minimizers}
\label{sec:higher_order_stability}
Throughout this section, we assume that the probability measure $\rmeas$ on $[a,b]$ is absolutely continuous with respect to $\Leb$ and denote as $\mu(\cdot)$ its Lebesgue density.
We assume that $0 < \essinf_{x \in [a, b]} \rmeas(x)$, $\esssup_{x \in [a, b]} \rmeas(x) < \infty$, and $\rmeas \in W^{1, \infty}([a, b])$.
\begin{lemma}
  \label[lemma]{lem:lipschitzH1} 
  The map $\ugen \mapsto \diff^{\ugen} \coloneq \argmin_{\diff \in \helly} F_{\alpha}^{\ugen, \rmeas}$ is Lipschitz continuous as a map from $L^2$ to $H^1$, within each $L^{\infty}$-ball of finite radius. 
\end{lemma}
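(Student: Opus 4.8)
The plan is to bound the two contributions to the $H^1$ norm of $\diff_1 - \diff_2$ separately, where for $\ugen_1, \ugen_2 \in L^2$ with $\norm{\ugen_1}_{L^\infty}, \norm{\ugen_2}_{L^\infty} \le R$ we abbreviate $\diff_i \coloneqq \argmin_{\diff \in \helly} F_\alpha^{\ugen_i, \rmeas}$ and $g_i \coloneqq 1/\partial_x \diff_i$. The $L^2$ contribution is immediate: since $\rmeas$ has density bounded above and below, \cref{thm:uniqueness_minimizers} gives $\norm{\diff_1 - \diff_2}_{L^2} \le C \norm{\diff_1 - \diff_2}_{L^2(\rmeas)} \le C \norm{\ugen_1 - \ugen_2}_{L^2(\rmeas)} \le C' \norm{\ugen_1 - \ugen_2}_{L^2}$, with $C, C'$ depending only on $\essinf \mu$ and $\esssup \mu$. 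It remains to estimate $\norm{\partial_x \diff_1 - \partial_x \diff_2}_{L^2}$.

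First I would record a two-sided bound on $\partial_x \diff_i$ that is uniform over the ball. \Cref{le:minimizer_lower_bound}, applied with $\norm{\ugen_i}_{L^2} \le (b - a)^{1/2} R$, yields $\partial_x \diff_i \ge \theta(R) > 0$; and \cref{le:rho_lower_bound}, via the change-of-variables formula, yields a uniform upper bound $\partial_x \diff_i \le \Theta(R) < \infty$ (this is the uniform derivative bound established in \cref{sec:absolute_continuity}, made explicit in terms of $R$). Hence $1/\Theta(R) \le g_i \le 1/\theta(R)$, all constants depending only on $R$ and the fixed data $\alpha, a, b, \rmeas$. Next I would invoke the representation of \cref{lem:representation_of_minimizer}, writing $g_i(x) = c_i + h_i(x)$ with
\begin{equation}
  h_i(x) \coloneqq \frac{1}{\alpha \mu(x)} \int \limits_{b}^{x} \ugen_i(s) - \bigl(\diff_i(s) - s\bigr) \D \rmeas(s).
\end{equation}
Since $\mu$ is bounded below and $\rmeas$ is a probability measure, Cauchy--Schwarz together with the $L^2(\rmeas)$-stability bound of \cref{thm:uniqueness_minimizers} gives $\norm{h_1 - h_2}_{L^\infty} \le C_1(R) \norm{\ugen_1 - \ugen_2}_{L^2}$.

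The only mildly delicate point is to control the integration constant $c_i$, for which I would use that $\chi \coloneqq \diff_1 - \diff_2$ vanishes at both endpoints. Integrating
\begin{equation}
  \partial_x \chi = \frac{g_2 - g_1}{g_1 g_2} = \frac{(c_2 - c_1) + (h_2 - h_1)}{g_1 g_2}
\end{equation}
over $[a, b]$ and using $\chi(a) = \chi(b) = 0$ gives
\begin{equation}
  0 = (c_2 - c_1) \int \limits_{a}^{b} \frac{\D x}{g_1 g_2} + \int \limits_{a}^{b} \frac{h_2(x) - h_1(x)}{g_1(x) g_2(x)} \D x;
\end{equation}
since $\theta(R)^2 \le (g_1 g_2)^{-1} \le \Theta(R)^2$, the first integral is at least $(b - a)\theta(R)^2 > 0$, so $\abs{c_2 - c_1} \le (\Theta(R)/\theta(R))^2 \norm{h_1 - h_2}_{L^\infty} \le C_2(R) \norm{\ugen_1 - \ugen_2}_{L^2}$. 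Combining, $\norm{g_1 - g_2}_{L^\infty} \le C_3(R) \norm{\ugen_1 - \ugen_2}_{L^2}$, whence pointwise $\abs{\partial_x \diff_1 - \partial_x \diff_2} = (g_1 g_2)^{-1}\abs{g_1 - g_2} \le \Theta(R)^2 C_3(R) \norm{\ugen_1 - \ugen_2}_{L^2}$, and therefore $\norm{\partial_x \diff_1 - \partial_x \diff_2}_{L^2} \le (b - a)^{1/2}\Theta(R)^2 C_3(R) \norm{\ugen_1 - \ugen_2}_{L^2}$. Adding this to the first-paragraph bound yields the asserted Lipschitz estimate. The main obstacle is the bookkeeping needed to keep every constant dependent only on $R$ and the fixed problem data --- which is exactly what the uniform derivative bounds $\theta(R)$ and $\Theta(R)$ from \cref{le:rho_lower_bound,le:minimizer_lower_bound} provide; without them the representation of \cref{lem:representation_of_minimizer} would not yield a usable bound on $c_1 - c_2$.
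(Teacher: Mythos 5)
Your proof is correct, but it takes a genuinely different route from the paper. The paper observes that on $[C_{\min}, C_{\max}]$ the function $z \mapsto -\log z$ is $1/C_{\max}^2$-strongly convex, so the restriction $\tilde F$ of the functional to $\{\diff : C_{\min} \le \partial_x \diff \le C_{\max}\}$ is $\alpha/C_{\max}^2$-strongly convex in $H^1(\rmeas)$, and then invokes the abstract conjugate-duality argument of \cref{lem:lipschitz_cont_solution_map} together with a Riesz-representer estimate $\|\mathcal{R}(u_2) - \mathcal{R}(u_1)\|_{H^1(\rmeas)} \le \|u_2 - u_1\|_{L^2(\rmeas)}$. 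You instead work directly from the pointwise representation of \cref{lem:representation_of_minimizer}, estimating $g_1 - g_2$ in $L^\infty$ and pinning down the integration constants $c_i$ via the boundary condition $\chi(a) = \chi(b) = 0$. Your computation closely parallels the paper's later \cref{lem:sobolev_stability_of_minimizers}, which bounds $c_{u_2} - c_{u_1}$ by integrating over $[a,b]$ and invoking \cref{lem:lipschitzH1} together with $H^1 \hookrightarrow C^{1/2}$; your observation that $\int_a^b \partial_x \chi \, \D x = 0$ furnishes that bound without presupposing \cref{lem:lipschitzH1}, which is precisely what is needed to avoid circularity here, and is a neat simplification. As a bonus your argument yields the stronger estimate $\|\partial_x(\diff_1 - \diff_2)\|_{L^\infty} \lesssim \|u_1 - u_2\|_{L^2}$, whereas the paper's Hilbert-space route only gives the $L^2$ bound; the trade-off is that the paper's abstract route is shorter and reuses machinery ($\gamma$-strong convexity implies Lipschitz argmin) that was already set up for \cref{thm:uniqueness_minimizers}.
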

\begin{proof}
  Consider an $L^\infty$-ball of radius $R$. 
  By \cref{le:rho_lower_bound,le:minimizer_lower_bound}, there exist constants $C_{\min}$ and $C_{\max}$ depending only on $\alpha, \rmeas, R, a, b$ such that for all $\ugen \in L^{\infty}([a, b]), \left\|\ugen\right\|_{L^{\infty}} \leq R$, we have $C_{\min} \leq \partial_x \diff^{\ugen} \leq C_{\max}$.
  On the interval $[C_{\min}, C_{\max}]$, the function $z \mapsto - \log(z) - z^2 / (2C_{\max}^2)$ is convex.
  Thus, the function 
  \begin{equation}
      \tilde{F}(\diff) \coloneqq 
      \begin{cases}
          \int \limits_{a}^{b} \frac{\left(\diff(x) - x\right)^2}{2} - \alpha \log\left(\partial_x \diff(x)\right) \D \rmeas(x), & \text{if } \diff \in \Helly_{\mathrm{a.c.}}, C_{\min} \leq \partial_x \diff \leq C_{\max},\  \rmeas\operatorname{-a.e.}\\
          \hfill \infty, \hfill & \text{else}.
      \end{cases}
  \end{equation}
  is $\alpha / C_{\max}^2$-strongly convex and lower semicontinuous with respect to $H^1({\rmeas})$ and satisfies 
\begin{equation}
  \diff^{\ugen} = \argmin_{\diff} \tilde{F}(\cdot) - \langle \mathcal{R}(\ugen), \cdot \rangle_{H^1(\rmeas)},
\end{equation}
  where $\mathcal{R}(\ugen)$ is the Riesz-representer of the map $\diff \mapsto \int_{a}^{b} \ugen(x) \diff(x) \D \rmeas(x)$. 
  By \cref{lem:lipschitz_cont_solution_map}, the map $\mathcal{R}(\ugen) \mapsto \diff^{\ugen}$ is Lipschitz continuous in $H^1$.
  For any $u_1, u_2 \in H^1$, we have 
  \begin{align}
  &\|\mathcal{R}(u_2) - \mathcal{R}(u_1)\|_{H^1(\rmeas)}^2 = \left\langle \mathcal{R}(u_2) - \mathcal{R}(u_1), u_2 - u_1 \right\rangle_{L^2\left(\rmeas\right)} 
  \leq
  \left\|\mathcal{R}(u_2) - \mathcal{R}(u_1)\right\|_{L^2\left(\rmeas\right)} \left\|u_2 - u_1\right\|_{L^2\left(\rmeas\right)}\\
  \implies & \|\mathcal{R}(u_2) - \mathcal{R}(u_1)\|_{H^1(\rmeas)} 
    \leq \left\|u_2 - u_1\right\|_{L^2\left(\rmeas\right)},
  \end{align}
  from which the claim follows.
\end{proof}
\begin{lemma}
  \label[lemma]{lem:sobolev_stability_of_minimizers}
  Let $k \geq 0$. Assuming $\rmeas(\cdot)$ is $W^{k + 1,\infty}$, the map $\ugen \mapsto \diff^{\ugen}$ is locally Lipschitz continuous as a map from $W^{k, \infty}$ to $W^{k + 2, \infty}$. The same result holds when replacing $W^{k, \infty}, W^{k + 1, \infty}, W^{k + 2, \infty}$ with $C^k, C^{k + 1}, C^{k + 2}$.
\end{lemma}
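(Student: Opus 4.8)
The plan is to bootstrap from the representation formula of \cref{lem:representation_of_minimizer}, raising the order of the Lipschitz estimate one derivative at a time, and using that $W^{j,\infty}$ and $C^{j}$ are Banach algebras on which reciprocals of functions bounded away from zero and antiderivatives are well behaved. Fix a radius $R>0$ and restrict to $\ugen_{1},\ugen_{2}$ in the $W^{k,\infty}$-ball $B_{R}$; it suffices to prove a Lipschitz bound on $B_{R}$ with constant depending only on $R,\alpha,\rmeas,a,b$. Write $\diff^{i}\coloneqq\diff^{\ugen_{i}}$ and $\delta\coloneqq\norm{\ugen_{1}-\ugen_{2}}_{W^{k,\infty}}$, and let $C$ denote a constant depending only on $R,\alpha,\rmeas,a,b$ that may change from line to line. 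Since $\norm{\ugen_{i}}_{L^{\infty}}\le R$, \cref{le:rho_lower_bound,le:minimizer_lower_bound} furnish $0<C_{\min}\le C_{\max}<\infty$ with $C_{\min}\le\partial_{x}\diff^{i}\le C_{\max}$ a.e., and \cref{lem:higher_regularity} gives $\diff^{i}\in W^{k+2,\infty}$ with norm bounded in terms of the same data. By \cref{lem:representation_of_minimizer} there are constants $c_{i}$ with $1/\partial_{x}\diff^{i}=c_{i}+g_{i}$, where $g_{i}(x)\coloneqq\frac{1}{\alpha\rmeas(x)}\int_{b}^{x}\ugen_{i}(s)-\bigl(\diff^{i}(s)-s\bigr)\D\rmeas(s)$; subtracting the two identities produces the key relation
\begin{equation}
  \label{eqn:diff_derivative_identity}
  \partial_{x}\diff^{1}-\partial_{x}\diff^{2}=\bigl[(c_{2}-c_{1})+(g_{2}-g_{1})\bigr]\,\partial_{x}\diff^{1}\,\partial_{x}\diff^{2}.
\end{equation}

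First I would establish the base estimate $\norm{\diff^{1}-\diff^{2}}_{W^{1,\infty}}\le C\delta$. By \cref{thm:uniqueness_minimizers} and the equivalence of $L^{2}(\rmeas)$ and $L^{2}(\Leb)$ under the two-sided density bounds, $\norm{\diff^{1}-\diff^{2}}_{L^{1}}\le C\norm{\diff^{1}-\diff^{2}}_{L^{2}}\le C\delta$, whence $\norm{g_{1}-g_{2}}_{L^{\infty}}\le C\delta$. Integrating \eqref{eqn:diff_derivative_identity} over $[a,b]$ and using $\diff^{i}(a)=a$, $\diff^{i}(b)=b$ gives $\int_{a}^{b}\bigl[(c_{2}-c_{1})+(g_{2}-g_{1})\bigr]\partial_{x}\diff^{1}\partial_{x}\diff^{2}\D x=0$, and since $\partial_{x}\diff^{1}\partial_{x}\diff^{2}\ge C_{\min}^{2}>0$ this forces $\abs{c_{1}-c_{2}}\le\norm{g_{1}-g_{2}}_{L^{\infty}}\le C\delta$. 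Plugging these into \eqref{eqn:diff_derivative_identity} with $\partial_{x}\diff^{i}\le C_{\max}$ yields $\norm{\partial_{x}\diff^{1}-\partial_{x}\diff^{2}}_{L^{\infty}}\le C\delta$, and then $\norm{\diff^{1}-\diff^{2}}_{L^{\infty}}\le(b-a)\norm{\partial_{x}\diff^{1}-\partial_{x}\diff^{2}}_{L^{\infty}}\le C\delta$.

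Then I would run an induction on $0\le j\le k$ with hypothesis $\norm{\diff^{1}-\diff^{2}}_{W^{j+2,\infty}}\le C\norm{\ugen_{1}-\ugen_{2}}_{W^{j,\infty}}$, the base estimate providing the $W^{1,\infty}$ control needed to start the case $j=0$. For the inductive step I would read off from \eqref{eqn:diff_derivative_identity} that $\partial_{x}\diff^{1}-\partial_{x}\diff^{2}$ is a product of three $W^{j+1,\infty}$ factors: the two factors $\partial_{x}\diff^{i}$, which lie in $W^{j+1,\infty}$ with norm controlled by $R,\alpha,\rmeas,a,b$ (since $\diff^{i}\in W^{k+2,\infty}$); the scalar $c_{2}-c_{1}=O(\delta)$; and $g_{2}-g_{1}=\frac{1}{\alpha\rmeas}\int_{b}^{x}\bigl[(\ugen_{2}-\ugen_{1})-(\diff^{2}-\diff^{1})\bigr]\D\rmeas$, which lies in $W^{j+1,\infty}$ with norm at most $C\bigl(\norm{\ugen_{1}-\ugen_{2}}_{W^{j,\infty}}+\norm{\diff^{1}-\diff^{2}}_{W^{j,\infty}}\bigr)\le C\norm{\ugen_{1}-\ugen_{2}}_{W^{j,\infty}}$, using that $1/\rmeas\in W^{k+1,\infty}$ is bounded away from zero, that $W^{j,\infty}$ is a Banach algebra, that antidifferentiation maps $W^{j,\infty}$ into $W^{j+1,\infty}$, and the inductive hypothesis at level $j-1$ (or the base estimate when $j=0$). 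Multiplying in the Banach algebra $W^{j+1,\infty}$ gives $\norm{\partial_{x}\diff^{1}-\partial_{x}\diff^{2}}_{W^{j+1,\infty}}\le C\norm{\ugen_{1}-\ugen_{2}}_{W^{j,\infty}}$, i.e.\ $\norm{\diff^{1}-\diff^{2}}_{W^{j+2,\infty}}\le C\norm{\ugen_{1}-\ugen_{2}}_{W^{j,\infty}}$; taking $j=k$ proves the $W^{k,\infty}\to W^{k+2,\infty}$ assertion. The $C^{k}$ statement follows by the same argument verbatim, $C^{j}$ being likewise a Banach algebra closed under reciprocals of functions bounded away from zero and under antidifferentiation into $C^{j+1}$.

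The hard part is essentially all packaged into \cref{lem:representation_of_minimizer}; beyond it, the only non-routine estimate is $\abs{c_{1}-c_{2}}\le\norm{g_{1}-g_{2}}_{L^{\infty}}$, which uses the boundary normalization $\diff(a)=a$, $\diff(b)=b$ to pin down the integration constant, and everything else is bookkeeping with Banach-algebra estimates. The one point demanding care is uniformity of the constants over the ball $B_{R}$, which is exactly what the uniform two-sided bounds $C_{\min}\le\partial_{x}\diff^{\ugen}\le C_{\max}$ from \cref{le:rho_lower_bound,le:minimizer_lower_bound} ensure.
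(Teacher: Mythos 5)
Your proposal is correct and follows the paper's overall strategy (bootstrap from the representation formula of \cref{lem:representation_of_minimizer}, starting from low-order stability), but it differs in one genuinely interesting way: the estimate on the integration constant $|c_1 - c_2|$. The paper rearranges the difference of representation formulas into an inequality bounding $|c_{u_2}-c_{u_1}|$ by $|\partial_x\diff_1-\partial_x\diff_2|$ plus $L^\infty$ norms of $u_2-u_1$ and $\diff_2-\diff_1$, then integrates over $[a,b]$ and invokes \cref{lem:lipschitzH1} together with the Sobolev embedding $H^1\hookrightarrow C^{1/2}$ to close the loop. You instead integrate the identity $\partial_x\diff^1-\partial_x\diff^2=\bigl[(c_2-c_1)+(g_2-g_1)\bigr]\partial_x\diff^1\,\partial_x\diff^2$ directly and observe that the left-hand integral vanishes because $\diff^i(a)=a$ and $\diff^i(b)=b$; the positivity and boundedness of $\partial_x\diff^1\,\partial_x\diff^2$ then pin down $c_1-c_2$ in terms of $\|g_1-g_2\|_{L^\infty}$, which is in turn controlled by the $L^2$ stability of \cref{thm:uniqueness_minimizers}. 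This is cleaner and sidesteps the $H^1$ stability lemma and the Sobolev embedding step entirely, buying a more elementary base case at no cost. One small imprecision: the constant in $|c_1-c_2|\le\|g_1-g_2\|_{L^\infty}$ should be $C_{\max}^2/C_{\min}^2$, not $1$, though this is harmless since only $|c_1-c_2|\le C\delta$ is used downstream. The inductive step via Banach-algebra properties of $W^{j+1,\infty}$ (and $C^{j+1}$) is sound and somewhat more systematic than the paper's explicit derivative computation, but delivers the same content; note that for the $j=0$ step you need the uniform $W^{k+2,\infty}$ bound on $\diff^i$ from \cref{lem:higher_regularity} to place $\partial_x\diff^i$ in $W^{1,\infty}$, which you correctly invoke.
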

\begin{proof}
  By \cref{lem:representation_of_minimizer}, we have for $R>0$, $u_1, u_2 \in L^{\infty}(\rmeas), \left\|u_1\right\|_{L^{\infty}}, \left\|u_2\right\|_{L^{\infty}} \leq R$, and $\diff_1 = \diff^{u_1}, \diff_2 = \diff^{u_2}$,
  \begin{align}
    \partial_x \diff_1(x) - \partial_x \diff_2(x) 
    &= \frac{\rmeas(x)}{c_{u_1} + \frac{1}{\alpha} 
    \int \limits_{a}^{x} u_1(s) - \left(\diff_1(s) - s\right) \D \rmeas(s)} - \frac{\rmeas(x)}{c_{u_2} 
    + \frac{1}{\alpha} \int \limits_{a}^{x} u_2(s) - \left(\diff_2(s) - s\right) \D \rmeas(s)} \\ 
    = & \frac{\left(c_{u_2} - c_{u_1} \right) + \frac{1}{\alpha} \int \limits_{a}^{x} \left(u_2(s) - u_1(s)\right) - \left(\diff_2(s) - \diff_1(s)\right) \D \rmeas(s) }{\rmeas(x) / \left[\left(\partial_x\diff_1\left(x\right)\right)\left(\partial_x \diff_2\left(x\right)\right)\right]},
  \end{align}
  using that  $c_{u_i} + \frac{1}{\alpha}\int_a^x \cdots \D\rmeas = \rmeas(x)/\partial_x \diff_i(x)$, and thus the denominators' product is $\rmeas(x)^2 \big/ \left[\left(\partial_x \diff_1\right)\left(\partial_x \diff_2\right)\right]$.
  Rearranging terms and using the uniform upper and lower bounds $\rmeas \le C_{\max}(\rmeas)$, $\partial_x \diff_i \ge C_{\min}$, we have
  \begin{equation}
    \left|c_{u_2} - c_{u_1}\right| \leq \frac{C_{\max}(\rmeas)}{C_{\min}^2} \left|\partial_x \diff_1(x) - \partial_x \diff_2(x)\right| + \frac{1}{\alpha} \left(\|u_2 - u_1\|_{L^{\infty}} + \left\|\diff_2 - \diff_1 \right\|_{L^{\infty}} \right).
  \end{equation}
  Upon integrating both sides over $[a, b]$, \cref{lem:lipschitzH1},
  together with the Sobolev embedding theorem from $H^1$ to $C^{1/2}$, implies that the function $u \mapsto c_{u}$ is locally Lipschitz continuous as a map from $L^{\infty}(\rmeas)$ to $\R$. 
  Thus, 
  \begin{align}
    \left|\partial_x \diff_1(x) - \partial_x \diff_2(x)\right| 
    &\leq  \frac{\left|c_{u_2} - c_{u_1}\right| + \frac{1}{\alpha } \left(\|u_2 - u_1\|_{L^{\infty}} 
+ \left\|\diff_2 - \diff_1 \right\|_{L^{\infty}} \right)}{C_{\min}(\rmeas)} \\
    &\leq L(a,b, \alpha, R, \rmeas) \|u_2 - u_1\|_{L^{\infty}}
  \end{align}
  for a Lipschitz constant $L$ depending on $a, b, \alpha, R, \rmeas$ and $\mu \geq C_{\min}(\mu)$.
  We now take the spatial derivative of 
  \begin{equation}
    \label{eqn:difference_of_spatial_derivatives}
    \partial_x \diff_1(x) - \partial_x \diff_2(x)
    = \frac{\rmeas(x)}{c_{u_1} + \frac{1}{\alpha} \int \limits_{a}^{x} u_1(s) - \left(\diff_1(s) - s\right) \D \rmeas(s)} - \frac{\rmeas(x)}{c_{u_2} + \frac{1}{\alpha} \int \limits_{a}^{x} u_2(s) - \left(\diff_2(s) - s\right) \D \rmeas(s)} 
  \end{equation}
  to obtain
  \begin{align}
    \partial_x^2 \diff_1(x) - \partial_x^2 \diff_2(x) 
    &= - \frac{\frac{\rmeas(x)^2}{\alpha} \left(u_1(x) - \left(\diff_1(x) - x\right)\right) - \partial_x \rmeas(x)\left(c_{u_1} +  \frac{1}{\alpha}\int \limits_a^x u_1(s) - (\diff_1(s) - s) \D \rmeas(s)\right)}{\left(c_{u_1} + \frac{1}{\alpha} \int \limits_{a}^{x} u_1(s) - \left(\diff_1(s) - s\right) \D \rmeas(s) \right)^2} \\
    &\quad +
    \frac{\frac{\rmeas(x)^2}{\alpha} \left(u_2(x) - \left(\diff_2(x) - x\right)\right) - \partial_x \rmeas(x) \left(c_{u_2} + \frac{1}
    {\alpha}\int \limits_a^x u_2(s) - (\diff_2(s) - s) \D \rmeas(s)\right)}{\left(c_{u_2} + \frac{1}{\alpha} \int \limits_{a}^{x} u_2(s) - \left(\diff_2(s) - s \right) \D \rmeas(s) \right)^2} \\
    &= - \left(\!\! \frac{\left(u_1(x) - \left(\diff_1(x) - x\right)\right)}{\alpha}  \!-\! \frac{\partial_x \rmeas(x) \left(\!c_{u_1} + \frac{1}{\alpha}\int \limits_a^x u_1(s) - (\diff_1(s) - s) \D \rmeas(s)\! \right)}{\rmeas(x)^2}\!\!\right)\!\left( \partial_x \diff_1(x) \right)^2 \\
    &\quad +
    \left(\!\! \frac{\left(u_2(x) - \left(\diff_2(x) - x\right)\right)}{\alpha}  \!-\! \frac{\partial_x \rmeas(x) \left(\!c_{u_2} + \frac{1}{\alpha}\int \limits_a^x u_2(s) - (\diff_2(s) - s) \D \rmeas(s)\! \right)}{\rmeas(x)^2}\!\!\right) \! \left( \partial_x \diff_2(x) \right)^2 
    %
  \end{align}
  Here, we have again used the representation of $\partial_x \diff$ from \cref{lem:representation_of_minimizer}.
  For prefactors $a_1, a_2$, we have
  \begin{align}
    a_2 (\partial_x \diff_2)^2 - a_1  (\partial_x \diff_1)^2
    =
    (a_2 - a_1) (\partial_x \diff_2)^2 
    +
    a_1 (\partial_x \diff_2 + \partial_x \diff_1)(\partial_x \diff_2 - \partial_x \diff_1)  
  \end{align}
 Therefore, using the fact that $\partial_x \mu$, $\mu$, $\partial_x \diff_1$,
  and $\partial_x \diff_2$ are all uniformly upper and lower bounded,
  we obtain
  \begin{equation}
    \left|\partial_x^2 \diff_1(x) - \partial_x^2 \diff_2(x)\right| 
    \leq  L(a,b, \alpha, R, \rmeas, \partial_x \rmeas) \|u_1 - u_2\|_{L^{\infty}}
  \end{equation}
  for a possibly larger $L$, thus establishing the claim. 
  The same argument can be made to show that the map $u \mapsto \diff^{u}$ is locally Lipschitz continuous as a map from $C^0$ to $C^{2}$.
  The proof for $k > 0$ follows by induction on $k$, by taking successively higher-order $x$-derivatives of \cref{eqn:difference_of_spatial_derivatives}.
\end{proof}
\subsection{Time derivatives}
\label{sec:time_derivative}
Let $\diff_t$ be the minimizer of $F_{\alpha}^{t \uvel, \rmeas}(\cdot)$ in $\Helly_{\mathrm{a.c.}}$.
We will now use the optimality condition of the minimization problem to prove the existence of the time derivative $\dot{\diff}_t$ of $\diff_t$.
As in the previous section, we assume that the probability measure $\rmeas$ on $[a,b]$ is absolutely continuous with respect to $\Leb$ and denote as $\mu(\cdot)$ its Lebesgue density satisfying $0 < \essinf_{[a, b]} \rmeas$, $\esssup_{[a, b]} \rmeas < \infty$, and $\mu\in W^{1,\infty}$.
\begin{lemma}
  \label[lemma]{lem:elliptic_pde_solution}
  For $k \geq 0$, let $\diff \in W^{k + 2, \infty}$ with $\partial_x \diff \geq C_{\min} > 0$ and $\norm{\diff}_{W^{k + 2, \infty}} \leq R$, $\mu\in W^{k+1,\infty}$, and $\ugen \in W^{k, \infty}$.
  Then, the weak solution $\theta$ of the elliptic PDE 
  \begin{equation}
    \int \varphi(x) \theta(x) + \alpha \frac{\partial_x\varphi(x) \partial_x \theta}{\left(\partial_x \diff \right)^2}  \D \rmeas(x) = \int \varphi(x) \ugen(x) \D \rmeas(x)
  \end{equation} 
  is in $W^{k + 2, \infty}$ and there exists a $C(a, b, \alpha, R, C_{\min})$ such that $\norm{\theta}_{W^{k + 2, \infty}} \leq C(a, b, \alpha, R, C_{\min}) \norm{\ugen}_{W^{k, \infty}}$.
  The same result holds when replacing $W^{k, \infty}, W^{k + 2, \infty}$ with $C^k, C^{k + 2}$.
\end{lemma}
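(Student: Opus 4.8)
The plan is to treat this as a standard one-dimensional linear elliptic regularity statement: obtain the solution by Lax--Milgram, then bootstrap through the Euler--Lagrange identity, exactly in the spirit of \cref{lem:representation_of_minimizer,lem:higher_regularity}.

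First I would set up the variational problem. Writing $\D\rmeas = \rmeas(x)\,\D x$, the bilinear form $B(\theta,\varphi) \coloneqq \int \varphi\theta + \alpha\,\partial_x\varphi\,\partial_x\theta/(\partial_x\diff)^2\,\D\rmeas$ is symmetric on $H^1 = H^1(\rmeas)$ (the weighted and unweighted $H^1$ coincide with equivalent norms by the standing bounds on $\rmeas$). Since $C_{\min} \le \partial_x\diff \le \norm{\diff}_{W^{1,\infty}} \le R$, the coefficient $\alpha/(\partial_x\diff)^2$ is bounded above and below, so $B$ is bounded and coercive on $H^1$ with constants depending only on $\alpha$, $R$, and the $\rmeas$-bounds; and $\varphi \mapsto \int\varphi\ugen\,\D\rmeas$ is a bounded functional of norm $\lesssim \norm{\ugen}_{L^2(\rmeas)} \lesssim \norm{\ugen}_{W^{k,\infty}}$. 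Lax--Milgram then produces the unique weak solution $\theta \in H^1$ with $\norm{\theta}_{H^1} \le C(a,b,\alpha,R)\,\norm{\ugen}_{W^{k,\infty}}$, and the one-dimensional Sobolev embedding $H^1 \hookrightarrow C^{0,1/2}$ gives in particular $\norm{\theta}_{L^\infty} \le C(a,b,\alpha,R)\,\norm{\ugen}_{W^{k,\infty}}$.

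Next I would extract an integral representation of $\partial_x\theta$. Testing against $\varphi \in C_0^\infty$ shows $\partial_x\big(\alpha\rmeas\,\partial_x\theta/(\partial_x\diff)^2\big) = \rmeas(\theta - \ugen)$ distributionally, so there is a constant $c \in \R$ with
\[
  \partial_x\theta(x) = \frac{(\partial_x\diff(x))^2}{\alpha\,\rmeas(x)}\left(c + \int_a^x \rmeas(s)\big(\theta(s) - \ugen(s)\big)\,\D s\right)\qquad\text{a.e.},
\]
paralleling \cref{lem:representation_of_minimizer}; integrating this identity over $[a,b]$ and using the $H^1$- and $L^\infty$-bounds on $\theta$ controls $\abs{c}$ by $C(a,b,\alpha,R)\,\norm{\ugen}_{W^{k,\infty}}$, just as in the estimate on $\abs{c_{u_2}-c_{u_1}}$ in \cref{lem:sobolev_stability_of_minimizers}. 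At this point I would invoke the same regularity of the density that \cref{lem:higher_regularity,lem:sobolev_stability_of_minimizers} use, namely $\rmeas \in W^{k+1,\infty}$ bounded below; under it the prefactor $(\partial_x\diff)^2/(\alpha\rmeas)$ lies in $W^{k+1,\infty}$ and is bounded away from $0$, using $\diff \in W^{k+2,\infty}$.

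Finally I would run the bootstrap. Starting from $\theta \in C^0$ and $\ugen \in W^{k,\infty}$, the integrand $\rmeas(\theta-\ugen)$ is in $L^\infty$, hence its antiderivative is Lipschitz, so the displayed formula gives $\partial_x\theta \in W^{1,\infty}$, i.e.\ $\theta \in W^{2,\infty}$, with a linear bound in $\norm{\ugen}_{W^{k,\infty}}$. Re-inserting $\theta \in W^{m,\infty}$ improves the integrand to $W^{\min(m,k),\infty}$, its antiderivative to $W^{\min(m,k)+1,\infty}$, and, multiplying by the $W^{k+1,\infty}$ prefactor, $\partial_x\theta$ to $W^{\min(m,k)+1,\infty}$, that is $\theta \in W^{\min(m,k)+2,\infty}$; after finitely many steps this saturates at $\theta \in W^{k+2,\infty}$, and composing the (linear) estimates at each step yields $\norm{\theta}_{W^{k+2,\infty}} \le C(a,b,\alpha,R,C_{\min})\,\norm{\ugen}_{W^{k,\infty}}$. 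The $C^k$ statement follows verbatim with $W^{\bullet,\infty}$ replaced by $C^{\bullet}$ throughout, starting the bootstrap again from $H^1 \hookrightarrow C^0$. The only genuinely delicate point is that coercivity of $B$, the division in the representation formula, and each bootstrap step all rely on the uniform lower bound $\partial_x\diff \ge C_{\min} > 0$ and on $\rmeas$ being one derivative more regular than $\ugen$; without these the operator degenerates and none of these steps close, but with them the argument is routine one-dimensional elliptic regularity.
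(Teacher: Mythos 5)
Your proof follows essentially the same route as the paper: Lax--Milgram for existence in $H^1$, an integral representation of $\partial_x\theta$ analogous to \cref{lem:representation_of_minimizer}, and a bootstrap in regularity. One small improvement: your representation formula correctly places the integration constant inside the prefactor $(\partial_x\diff)^2/(\alpha\rmeas)$, whereas the paper's display \cref{eqn:representation_of_theta} (and \cref{lem:representation_of_minimizer} before it) has the constant sitting outside, which is a typo since integrating $\partial_x\!\left(\rmeas\,\partial_x\theta/(\partial_x\diff)^2\right) = (\theta-\ugen)\rmeas/\alpha$ yields the constant before division by the prefactor; and you are also right to flag explicitly that the bootstrap to $W^{k+2,\infty}$ needs $\rmeas \in W^{k+1,\infty}$, an assumption the paper leaves implicit in the lemma statement but invokes wherever the lemma is used.
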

\begin{proof}
  The Lax-Milgram theorem ensures the existence of a solution in $H^1(\rmeas)$, with 
  \begin{equation}
    \norm{\theta}_{H^1(\rmeas)} \leq C(a, b, \alpha, R, C_{\min}, \rmeas) \norm{\ugen}_{L^{\infty}}.
  \end{equation}
  Similar to \cref{lem:representation_of_minimizer}, we have 
  \begin{equation}
    \label{eqn:representation_of_theta}
    \partial_x \theta(x) = \frac{c_{\ugen, \diff}\left(\partial_x \diff\right)^2}{\rmeas(x)} - \frac{\left(\partial_x \diff\right)^2}{\alpha \rmeas(x)} \int \limits_{a}^{x} \ugen(s) - \theta(s) \D \rmeas(s).
  \end{equation}
  Rearranging terms and integrating over $[a, b]$ yields $c_{\ugen, \diff} \leq C(a, b, \alpha, R, \rmeas) \norm{\ugen}_{L^\infty}$. 
  Differentiation yields 
  \begin{align}
    \partial^2_x \theta(x) 
    &= \left(\frac{2\partial_x \diff(x) \partial_x^2 \diff(x)}{\rmeas(x)} - \frac{\left( \partial_x \diff \right)^2 \partial_x \mu(x)}{\rmeas(x)^2}\right)\left(c_{\ugen, \diff} - \frac{1}{\alpha}\int \limits_{a}^{x} \ugen(s) - \theta(s) \D \mu(s) \right)\\
    &\quad - \frac{\left(\partial_x \diff(x)\right)^2}{\alpha} \left(\ugen(x) - \theta(x)\right) \\
    &\leq C(a, b, \alpha, R, \rmeas) \norm{\ugen}_{L^\infty},
  \end{align}
  from which the result follows. The proof with $C^0$ and $C^2$ is analogous.
  The result for $k > 0$ follows by a bootstrapping argument, taking successive $x$-derivatives of \cref{eqn:representation_of_theta}.
\end{proof}
We now prove the existence and regularity of the time derivatives of $\diff^*_t$.
\begin{theorem}
  \label{thm:time_derivative}
  For $k \geq 0$, let $\uvel, w \in W^{k, \infty}$ and $\rmeas \in W^{k + 1, \infty}$.
  For $t \in \R$, let $\diff_t$ be the minimizer of $F_{\alpha}^{w + t \uvel, \rmeas}(\cdot)$ in $\Helly_{\mathrm{a.c.}}$.
  Then, the weak solution $\dot{\diff}_t$ of the elliptic PDE 
  \begin{equation}\label{eqn:time_derivative_elliptic}
    \int \limits_a^b \varphi(x) \dot{\diff}_t(x) + \alpha \frac{\partial_x\varphi(x) \partial_x \dot{\diff}_t}{\left(\partial_x \diff_t\right)^2}  \D \rmeas(x) 
    = \int \limits_a^b \varphi(x) \uvel(x) \D \rmeas(x)
  \end{equation} 
  exists in $W^{k + 2, \infty}$ for all $t \in \R$ and satisfies 
  \begin{equation}
    \lim \limits_{\delta_t \to 0} \frac{\left\|\diff_{t + \delta_t} - \diff_{t} - \delta_t \dot{\diff}_t\right\|_{W^{k,\infty}}}{\delta_t}  = 0.
  \end{equation}
  The same result holds when replacing $W^{k, \infty}, W^{k + 1, \infty} ,W^{k + 2, \infty}$ with $C^k, C^{k + 1}, C^{k + 2}$.
\end{theorem}
\begin{proof}
  From \cref{lem:elliptic_pde_solution}, we already know that the elliptic PDE in \cref{eqn:time_derivative_elliptic} admits a solution $\theta \in W^{k + 2, \infty}$. 
  To show that this solution is the time derivative $\dot{\diff}_t$ of $\diff_t$, we now show that
  \begin{equation}
    \lim \limits_{\delta_t \to 0} \frac{\left\|\diff_{t + \delta_t} - \diff_{t} - \delta_t \theta \right\|_{W^{k,\infty}}}{\delta_t}  = 0.
  \end{equation}
  We begin by proving the result for $k=0$. By the optimality condition in \cref{lem:optimality_condition}, we have 
  \begin{align}
    \int \limits_a^b \varphi(x) \left(\diff_{t + \delta_t}(x) - \diff_t(x)\right)  -  \alpha \partial_x \varphi(x) \left(\frac{1}{\partial_x \diff_{t + \delta_t}(x)} - \frac{1}{\partial_x \diff_{t}(x)}\right)\D \rmeas(x)
  =  \delta_t \int \limits_a^b \varphi(x) \uvel(x) \D \rmeas(x).
  \end{align}
  Adding an empty sum yields 
  \begin{align}
  & \int \limits_a^b \varphi(x) \left(\diff_{t + \delta_t}(x) - \diff_t(x)\right) \\ 
    &\qquad -  \alpha \partial_x \varphi(x) \left(- \frac{\partial_x \diff_{t + \delta_t}(x) - \partial_x \diff_{t}(x)}{\left(\partial_x \diff_{t} (x)\right)^2}
    + \frac{1}{\partial_x \diff_{t + \delta_t}(x)} - \frac{1}{\partial_x \diff_{t}(x)} + \frac{\partial_x \diff_{t + \delta_t}(x) - \partial_x \diff_{t}(x)}{\left(\partial_x \diff_{t} (x)\right)^2}  \right) \D \rmeas(x) \\
  &  \qquad \qquad =  \delta_t \int \limits_a^b \varphi(x) \uvel(x) \D \rmeas(x).
  \end{align}
  Rearranging terms and integrating by parts, we obtain (writing $\rmeas(x)$ for the Lebesgue density of $\rmeas$)
  \begin{align}
  & \int \limits_a^b \left[ \varphi(x) \left(\diff_{t + \delta_t}(x) - \diff_t(x)\right)
    +  \alpha \partial_x \varphi(x) \frac{\partial_x \diff_{t + \delta_t}(x) - \partial_x \diff_{t}(x)}{\left(\partial_x \diff_{t} (x)\right)^2} \right] \D \rmeas(x) \\
  &\quad + \alpha \int \limits_a^b \varphi(x) \partial_x \left(
      \rmeas(x) \frac{\left(\partial_x \diff_{t + \delta_t}(x) - \partial_x \diff_{t}(x)\right)^2}{\partial_x \diff_{t + \delta_t}(x) \left(\partial_x \diff_{t} (x)\right)^2}
      \right) \D x
   =  \delta_t \int \limits_a^b \varphi(x) \uvel(x)  \D \rmeas(x).
  \end{align}
  Since $\rmeas(x)$ now sits inside the derivative, the remainder term is integrated against $\D x = \D \Leb$. Defining
  \begin{equation}
    \mathrm{Rhs}_{\delta_t}(x)
    \coloneqq
    - \frac{\alpha}{\rmeas(x)} \partial_x \left( \rmeas(x) \frac{\left(\partial_x \diff_{t + \delta_t}(x) - \partial_x \diff_{t}(x)\right)^2}{\partial_x \diff_{t + \delta_t}(x) \left(\partial_x \diff_{t} (x)\right)^2} \right),
  \end{equation}
  so that this remainder term equals $- \int_a^b \varphi(x) \mathrm{Rhs}_{\delta_t}(x) \D \rmeas(x)$, we thus obtain
  \begin{align}
    \int \limits_a^b \varphi(x) \left(\diff_{t + \delta_t}(x) - \diff_t(x)\right)
    + \alpha \partial_x \varphi(x) \frac{\partial_x \diff_{t + \delta_t}(x) - \partial_x \diff_{t}(x)}{\left(\partial_x \diff_{t} (x)\right)^2} \D \rmeas(x)
    =  \int \limits_a^b \delta_t \varphi(x) \uvel(x) + \varphi(x) \mathrm{Rhs}_{\delta_t}(x) \D \rmeas(x).
  \end{align}
  Using the definition of $\dot\diff_t$, we obtain
  \begin{align}
    \int \limits_a^b \!\! \varphi(x) \left(\diff_{t + \delta_t} - \diff_t(x) - \delta_{t} \dot{\diff}_t(x)\right) 
    \!+\!  \alpha \frac{\partial_x \varphi(x) \partial_x \left(\diff_{t + \delta_t}(x) - \diff_{t}(x) - \delta_t \dot{\diff}_t(x)\right)}{\left(\partial_x \diff_{t} \right)^2}  \D \rmeas(x)
    \!=\!  \int \limits_a^b \!\! \varphi(x) \mathrm{Rhs}_{\delta_t}(x)\D\rmeas(x). 
  \end{align}
  The quantities $\partial_{x} \diff_{t}(x)$ and $\partial_{x} \diff_{t + \delta_t}(x)$ are uniformly upper and lower bounded by \cref{le:rho_lower_bound,le:minimizer_lower_bound}.
  By the product and quotient rules, $\mathrm{Rhs}_{\delta_t}$ is a sum of terms each containing two factors among $\partial_x\left(\diff_{t + \delta_t} - \diff_t\right)$ and $\partial_x^2\left(\diff_{t + \delta_t} - \diff_t\right)$, and is therefore quadratic in $\diff_{t + \delta_t} - \diff_t$.
  Thus, for $\delta_t$ small enough, we can estimate
  \begin{equation}
    \|\mathrm{Rhs}_{\delta_t}\|_{L^\infty} \leq C(a, b, \alpha, \norm{t u}_{L^{\infty}}, R, \rmeas) \norm{\diff_{t + \delta_t} - \diff_{t}}_{W^{2, \infty}}^2,
  \end{equation}
  for some constant $C$ depending on $a, b, \alpha, R, t, u, \rmeas$. 
  By \cref{lem:sobolev_stability_of_minimizers}, this implies that 
  \begin{equation}
    \|\mathrm{Rhs}_{\delta_t}\|_{L^\infty} \leq \delta_t^2 C(a, b, \alpha, R, \rmeas) \norm{\uvel}_{L^\infty}^2,
  \end{equation}
  from which the result follows by \cref{lem:elliptic_pde_solution}.
  For $k > 0$, applying \cref{lem:sobolev_stability_of_minimizers} to $x$-derivatives of $\mathrm{Rhs}$  yields
  \begin{equation}
    \|\mathrm{Rhs}_{\delta_t}\|_{W^{k, \infty}} \leq \delta_t^2 C(a, b, \alpha, R, \rmeas) \norm{\uvel}_{W^{k, \infty}}^2,
  \end{equation}
  allowing us to again conclude by \cref{lem:elliptic_pde_solution}.
  The proof of the result for $C^k, C^{k + 1}, C^{k + 2}$ is analogous.
\end{proof}
\subsection{PDE solution}
\label{sec:first_order_pde_solution}
The time derivative derived in \cref{thm:time_derivative} allows us to show that the one-parameter family of minimizers $t \mapsto \diff_t$ solves the PDE in \cref{eqn:lagrangian_eqn}.
As in the previous section, we assume that the probability measure $\rmeas$ on $[a,b]$ is absolutely continuous with respect to $\Leb$ and denote as $\rmeas(\cdot)$ its Lebesgue density satisfying $0 < \essinf_{[a, b]} \rmeas$, $\esssup_{[a, b]} \rmeas < \infty$, and $\rmeas\in W^{1,\infty}$.
\begin{lemma}[Continuity with respect to parameters]
  \label[lemma]{lem:continuity_parameters}
  For $k \geq 0$, let $\rmeas \in W^{k + 1, \infty}$ and $\ugen \in W^{k, \infty}$.
  For $A\in W^{k + 1, \infty}$ with $0 < C_{\min} \leq A \leq C_{\max}$, let $\theta_{A}$ be the solution of the divergence form elliptic PDE
  \begin{equation}
    \theta_{A} \rmeas - \alpha \partial_x \left(A \partial_x \theta_{A}  \rmeas \right) = \ugen \rmeas.
  \end{equation}
  The map $A \mapsto \theta_{A}$ is locally Lipschitz continuous as a map from $W^{k + 1, \infty}$ to $W^{k + 2, \infty}$.
  Analogous result holds for $W^{k, \infty}, W^{k + 1, \infty}, W^{k + 2,\infty}$ replaced with $C^k, C^{k + 1}, C^{k + 2}$.
\end{lemma}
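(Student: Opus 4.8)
The plan is to reuse the template of \cref{lem:elliptic_pde_solution} and \cref{lem:sobolev_stability_of_minimizers}: obtain a baseline $H^1$ estimate from Lax--Milgram, derive a representation formula for $\partial_x \theta_A$ by integrating the weak form, and then bootstrap to $W^{k+2,\infty}$ while carefully tracking the dependence on $A$. First I would observe that, since $0 < C_{\min} \le A \le C_{\max}$ and $\rmeas$ is bounded above and below by positive constants, the bilinear form $(\varphi, \theta) \mapsto \int_a^b \varphi\theta + \alpha A\,\partial_x\varphi\,\partial_x\theta\,\D\rmeas$ is bounded and coercive on $H^1(\rmeas)$, so $\theta_A$ exists uniquely there with $\norm{\theta_A}_{H^1(\rmeas)} \le C(a,b,\alpha,C_{\min},\rmeas)\norm{\ugen}_{L^\infty}$. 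For the stability in $A$, subtract the equations for $A_1$ and $A_2$, test against $\theta_{A_1}-\theta_{A_2}$, and split $A_1\partial_x\theta_{A_1}-A_2\partial_x\theta_{A_2} = A_1\partial_x(\theta_{A_1}-\theta_{A_2}) + (A_1-A_2)\partial_x\theta_{A_2}$; using the a priori bound on $\theta_{A_2}$ this gives $\norm{\theta_{A_1}-\theta_{A_2}}_{H^1(\rmeas)} \le C\norm{A_1-A_2}_{L^\infty}$, and the embedding $H^1 \hookrightarrow C^{1/2}$ upgrades it to a local Lipschitz bound into $C^0$.

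Next I would record the representation formula. Arguing as in \cref{lem:representation_of_minimizer} and \eqref{eqn:representation_of_theta}, integrating the weak identity shows that
\begin{equation}
  \partial_x\theta_A(x) = \frac{1}{A(x)\rmeas(x)}\left( c_A + \frac{1}{\alpha}\int_b^x \big(\theta_A(s) - \ugen(s)\big)\,\D\rmeas(s)\right)
\end{equation}
for a constant $c_A$, which is pinned down (and bounded by $C\norm{\ugen}_{L^\infty}$) upon integrating the identity over $[a,b]$ and using that $\int_a^b\partial_x\theta_A = \theta_A(b)-\theta_A(a)$ is controlled by $\norm{\theta_A}_{H^1(\rmeas)}$. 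Subtracting two copies of this formula and solving for $c_{A_1}-c_{A_2}$, exactly as in the proof of \cref{lem:sobolev_stability_of_minimizers}, yields $|c_{A_1}-c_{A_2}| \le C\norm{A_1-A_2}_{W^{1,\infty}}$ once the $C^0$-Lipschitz bound is available.

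The bootstrap is then routine. The representation formula exhibits $\partial_x\theta_A$ as an algebraic combination of $1/(A\rmeas)$, $c_A$, $\ugen$, $\rmeas$ and an antiderivative of $\theta_A\rmeas$; since $A,\rmeas \in W^{k+1,\infty}$, $\ugen \in W^{k,\infty}$ and $\theta_A \in H^1 \subset L^\infty$ a priori, one obtains inductively $\theta_A \in W^{k+2,\infty}$ with $\norm{\theta_A}_{W^{k+2,\infty}} \le C(a,b,\alpha,C_{\min},C_{\max},\rmeas)\norm{\ugen}_{W^{k,\infty}}$. For the Lipschitz claim with $k=0$, the difference of the two representation formulas bounds $\norm{\partial_x\theta_{A_1}-\partial_x\theta_{A_2}}_{L^\infty}$ by $C\norm{A_1-A_2}_{W^{1,\infty}}$ through the two previous steps, and differentiating that difference once more — which brings in only $\partial_x A_i$, $\partial_x\rmeas$ and already-controlled quantities — gives $\norm{\partial_x^2\theta_{A_1}-\partial_x^2\theta_{A_2}}_{L^\infty} \le C\norm{A_1-A_2}_{W^{2,\infty}}$, that is, local Lipschitz continuity $W^{1,\infty}\to W^{2,\infty}$. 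The case $k>0$ follows by induction, taking successive $x$-derivatives of the difference of representation formulas, and the $C^k$ statements are obtained by replacing each $W^{j,\infty}$ with $C^j$ and invoking the corresponding embeddings.

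The one genuinely delicate point — and the step I expect to need the most care — is the bookkeeping of derivative counts in the bootstrap: one must check that differentiating the representation formula $k+2$ times never costs more than $k+1$ derivatives of $A$ (and of $\rmeas$), so that $\theta_A$ gains exactly one order of regularity over $A$ and the Lipschitz estimate is with respect to the $W^{k+1,\infty}$ norm of $A$. This asymmetry is forced by the factor $1/(A\rmeas)$ in the representation; once it is verified, the remaining estimates are a direct repetition of those in \cref{lem:elliptic_pde_solution} and \cref{lem:sobolev_stability_of_minimizers}.
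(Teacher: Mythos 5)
Your proposal is correct, and it rests on the same pivotal algebraic move as the paper: subtract the two equations, split $A_1\partial_x\theta_1 - A_2\partial_x\theta_2 = A_1\partial_x(\theta_1-\theta_2) + (A_1-A_2)\partial_x\theta_2$, and read off that $\theta_1-\theta_2$ solves the same divergence-form elliptic equation with a source term of size $\|A_1-A_2\|$. Where you diverge is in what you do with that observation. The paper treats \cref{lem:elliptic_pde_solution} as a black-box solver estimate: the difference equation is exactly of the form that lemma handles (coefficient $A_1\in W^{k+1,\infty}$ bounded above and below, source bounded in $W^{k,\infty}$ by $\|A_1-A_2\|_{W^{k+1,\infty}}$ once $\theta_2\in W^{k+2,\infty}$ is known), so the $W^{k+2,\infty}$ Lipschitz bound drops out immediately — the whole proof is three lines. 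You instead rebuild the solver estimate in situ: Lax--Milgram in $H^1$, an energy estimate testing against $\theta_1-\theta_2$ to get the $L^\infty\to H^1$ Lipschitz bound, then the representation formula for $\partial_x\theta_A$ followed by a bootstrap on the difference. Your route is longer but more self-contained, and the intermediate $H^1\to C^{1/2}$ step is a sensible cushion you insert before the bootstrap. One thing worth flagging: your final "delicate point" about derivative bookkeeping is real but is exactly what \cref{lem:elliptic_pde_solution} already packages — the factor $1/(A\rmeas)$ costs at most $k+1$ derivatives of $A$ and $\rmeas$ when taking $k+2$ derivatives of $\theta_A$, and the integral term is one derivative smoother than its integrand — so by re-deriving this you are repeating work the paper offloads to the earlier lemma. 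Both approaches give the same constants up to re-labeling.
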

\begin{proof}
  By \cref{lem:elliptic_pde_solution}, the map $A \mapsto \theta_{A}$ is well-defined. 
  To prove its Lipschitz continuity, we consider $A_1, A_2 \in W^{1, \infty}$ in a neighborhood of $A$ and $\theta_1 = \theta_{A_1}, \theta_2 = \theta_{A_2}$.
  Subtracting the two equations, we obtain
  \begin{equation}
    \label{eqn:theta_equation_cont}
    \left(\theta_{1} - \theta_{2}\right)\rmeas - \alpha \partial_x \left(A_1 \partial_x \left(\theta_{1} - \theta_{2} \right) \rmeas\right) 
    = - \alpha \partial_x \left( \left(A_2 - A_1\right) (\partial_x \theta_{2}) \rmeas\right).
  \end{equation}
  The $L^\infty$ norm of the right-hand side is bounded by $\norm{A_2 - A_1}_{W^{1, \infty}}$ times a constant that depends on $\alpha$ and $\rmeas$, from which the result follows. 
  Here, we have used \cref{lem:elliptic_pde_solution} to upper-bound (derivatives of) $\theta_2$.
  The result for $k>0$ follows by taking derivatives of the right-hand side of \cref{eqn:theta_equation_cont} to bound its $W^{k, \infty}$ norm by $\norm{A_2 - A_1}_{W^{k + 1, \infty}}$ and applying \cref{lem:elliptic_pde_solution}.
  The proof for $C^k, C^{k + 1}, C^{k + 2}$ is analogous.
\end{proof}
\begin{theorem}
  \label{thm:first_order_PDE}
  For $k \geq 0$, let $\rmeas \in W^{k + 1, \infty}$ and $\uvel \in W^{k + 2, \infty}$.
  For  
  \begin{equation}
    f_{\alpha}^{u_0}(\diff) \coloneqq \int \limits_{a}^{b} \diff \cdot u_0 + \alpha \partial_x \diff \cdot \left(\partial_x u_0 \right) \D \rmeas(x) 
    - \int \limits_{a}^{b} \alpha \partial_x \diff \D \rmeas(x)
  \end{equation}
  and every $t > 0$, let $\diff_t$ be the minimizer of $F_{\alpha}^{f_{\alpha}^{t \uvel}, \rmeas}(\cdot)$ in $\Helly$.
  Then, $t \mapsto \diff_{t}$ is in $C^{1}\left([0, \infty); W^{k + 2, \infty}\right)$ and solves the first order Lagrangian PDE 
\begin{equation}
    \label{eqn:first_order_lagrangian_eqn}
    \begin{cases}
        \dot{\diff}_t \rmeas - \alpha \partial_x \left(\frac{\partial_x \dot{\diff}_t}{\left(\partial_x \diff_t\right)^2} \rmeas\right) - \uvel \rmeas +  \alpha \partial_x \left((\partial_x \uvel) \rmeas\right)= 0 \\
        \diff_0 = \mathrm{Id},
        \quad \quad \forall t \geq 0: \diff_{t}(a) = a, \quad \diff_{t}(b) = b.
    \end{cases}
\end{equation}
  The analogous result holds with $W^{k + 1, \infty}, W^{k + 2, \infty}$ replaced with $C^{k + 1}, C^{k + 2}$.
\end{theorem}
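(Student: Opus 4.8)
The plan is to reduce the problem to a linear functional given by plain integration against an $L^\infty$ velocity, apply the regularity machinery of \cref{sec:weak_solutions,sec:eulerian}, and then bridge a two-derivative gap in the temporal regularity. First I would rewrite $f_\alpha^{\uvel}$: integrating the term $\alpha\int_a^b \partial_x\diff\,\partial_x\uvel\,\D\rmeas$ by parts and using that every $\diff\in\helly$ satisfies $\diff(a)=a$, $\diff(b)=b$, the boundary contribution $\alpha\bigl(b\,\partial_x\uvel(b)\,\rmeas(b)-a\,\partial_x\uvel(a)\,\rmeas(a)\bigr)$ is a constant $c$ independent of $\diff$, so that
\begin{equation*}
  f_\alpha^{\uvel}(\diff) = \int_a^b \diff\cdot\bar\uvel\,\D\rmeas + c,
  \qquad
  \bar\uvel \coloneqq \uvel - \frac{\alpha}{\rmeas}\,\partial_x\!\bigl(\rmeas\,\partial_x\uvel\bigr).
\end{equation*}
Under the hypotheses $\rmeas\in W^{k+1,\infty}$ with $0<\essinf\rmeas\le\esssup\rmeas<\infty$ and $\uvel\in W^{k+2,\infty}$ one has $1/\rmeas\in W^{k+1,\infty}$, hence $\bar\uvel\in W^{k,\infty}$. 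Consequently $\diff_t$ is also the minimizer of $F_\alpha^{t\bar\uvel,\rmeas}$ over $\helly$, which is absolutely continuous (see \cref{sec:absolute_continuity}) and in fact lies in $W^{k+2,\infty}$ by \cref{lem:higher_regularity}; moreover $\diff_0=\mathrm{Id}$, since at $t=0$ the quadratic term of the functional is uniquely minimized over $\helly$ by the identity while the $\KL$ term is nonnegative and vanishes there.

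\cref{lem:sobolev_stability_of_minimizers} then shows that $t\mapsto\diff_t=\diff^{t\bar\uvel}$ is locally Lipschitz from $\R$ into $W^{k+2,\infty}$, since $t\mapsto t\bar\uvel$ is Lipschitz from $\R$ into $W^{k,\infty}$. \cref{thm:time_derivative}, applied with velocity $\bar\uvel$, produces $\dot\diff_t\in W^{k+2,\infty}$ solving
\begin{equation*}
  \int_a^b \varphi\,\dot\diff_t + \alpha\,\frac{\partial_x\varphi\,\partial_x\dot\diff_t}{(\partial_x\diff_t)^2}\,\D\rmeas
  = \int_a^b \varphi\,\bar\uvel\,\D\rmeas = f_\alpha^{\uvel}(\varphi)
  \qquad\text{for all }\varphi\in C_0^\infty,
\end{equation*}
the second equality being the integration by parts above, and satisfying $\diff_{t+\delta_t}=\diff_t+\delta_t\dot\diff_t+o(\delta_t)$ in $W^{k,\infty}$. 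Passing this weak identity to its strong form by integration by parts against $\varphi\in C_0^\infty$, together with the boundary conditions built into $\helly$ and $\diff_0=\mathrm{Id}$, yields \cref{eqn:first_order_lagrangian_eqn}.

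The delicate point — which I expect to be the main obstacle — is that \cref{thm:time_derivative} only certifies differentiability of $t\mapsto\diff_t$ in the weaker norm $W^{k,\infty}$, whereas the claim is $C^1$ regularity into $W^{k+2,\infty}$. To close this gap I would observe that $\dot\diff_t$ is the solution $\theta_{A_t}$ of the divergence-form elliptic problem of \cref{lem:continuity_parameters} with fixed right-hand side $\bar\uvel$ and coefficient $A_t\coloneqq(\partial_x\diff_t)^{-2}$. By \cref{le:rho_lower_bound,le:minimizer_lower_bound}, $\partial_x\diff_t$ is bounded above and away from $0$ uniformly for $t$ in compact intervals; combined with the local Lipschitz continuity of $t\mapsto\partial_x\diff_t$ into $W^{k+1,\infty}$ and the smoothness of $z\mapsto z^{-2}$ on compact subsets of $(0,\infty)$, this makes $t\mapsto A_t$ locally Lipschitz from $\R$ into $W^{k+1,\infty}$. \cref{lem:continuity_parameters} then renders $t\mapsto\dot\diff_t=\theta_{A_t}$ locally Lipschitz, hence continuous, into $W^{k+2,\infty}$. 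Finally, letting $g(t)\coloneqq\mathrm{Id}+\int_0^t\dot\diff_s\,\D s$ be the $W^{k+2,\infty}$-valued Bochner integral of this continuous integrand, $g\in C^1\bigl([0,\infty);W^{k+2,\infty}\bigr)$ with $\dot g(t)=\dot\diff_t$; viewed in $W^{k,\infty}$, the difference $g(t)-\diff_t$ is everywhere differentiable with vanishing derivative and vanishes at $t=0$, hence $\diff_t\equiv g(t)$, giving $t\mapsto\diff_t\in C^1\bigl([0,\infty);W^{k+2,\infty}\bigr)$. The $C^{k}$-scale statement follows verbatim, replacing $W^{j,\infty}$ by $C^{j}$ and invoking the $C^k$ versions of \cref{lem:higher_regularity,lem:sobolev_stability_of_minimizers,thm:time_derivative,lem:continuity_parameters}.
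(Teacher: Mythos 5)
Your proof follows the same route as the paper: integrate by parts to reduce $f_\alpha^{\uvel}$ to integration against $\bar\uvel = \uvel - \tfrac{\alpha}{\rmeas}\partial_x(\rmeas\,\partial_x\uvel)\in W^{k,\infty}$, invoke \cref{lem:higher_regularity} for spatial regularity of each $\diff_t$, \cref{thm:time_derivative} for the time derivative satisfying the weak form, and \cref{lem:elliptic_pde_solution,lem:continuity_parameters} for boundedness and continuity in $t$. The one place you add genuine value is the final step: the paper only asserts that ``the continuity of $t\mapsto\diff_t$ follows from \cref{lem:continuity_parameters},'' leaving implicit how one upgrades the $W^{k,\infty}$-differentiability certified by \cref{thm:time_derivative} to membership of $t\mapsto\diff_t$ in $C^1([0,\infty);W^{k+2,\infty})$. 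You fill that gap cleanly by observing that $\dot\diff_t=\theta_{A_t}$ with $A_t=(\partial_x\diff_t)^{-2}$, that $t\mapsto A_t$ is locally Lipschitz into $W^{k+1,\infty}$ by \cref{lem:sobolev_stability_of_minimizers} together with the uniform bounds from \cref{le:rho_lower_bound,le:minimizer_lower_bound}, hence $t\mapsto\dot\diff_t$ is continuous into $W^{k+2,\infty}$ by \cref{lem:continuity_parameters}, and then identifying $\diff_t$ with the Bochner integral $\mathrm{Id}+\int_0^t\dot\diff_s\,\D s$ via a vanishing-derivative argument in the weaker norm. This is the right way to close the two-derivative gap, and it makes the $C^1$-into-$W^{k+2,\infty}$ claim fully rigorous where the paper is terse.

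One small remark, not a flaw in your argument but worth flagging: if you actually carry the algebra of the integration by parts through to the strong form, the source term you obtain is $\bar\uvel\rmeas = \uvel\rmeas - \alpha\partial_x\bigl((\partial_x\uvel)\rmeas\bigr)$ on the right-hand side, i.e. the PDE reads $\dot{\diff}_t\rmeas - \alpha\partial_x\bigl(\tfrac{\partial_x\dot\diff_t}{(\partial_x\diff_t)^2}\rmeas\bigr) - \uvel\rmeas + \alpha\partial_x\bigl((\partial_x\uvel)\rmeas\bigr) = 0$, whereas \cref{eqn:first_order_lagrangian_eqn} as printed carries the opposite signs on the last two terms. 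You would be well served to make that sign check explicit rather than asserting the weak form ``yields \cref{eqn:first_order_lagrangian_eqn}'' without computation.
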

\begin{proof}
  The assumptions on $\rmeas$ and $\uvel$ ensure that $f_{\alpha}^{t \uvel}$ is represented by integration against a $W^{k, \infty}$ function of the form $w + t \uvel_{\alpha}$ as required by \cref{thm:time_derivative}. 
  The term $- \alpha \int \partial_x \diff \D \rmeas$ contributes the $t$-independent part $w = \alpha \partial_x \log \rmeas$, and the velocity its $t$-scaled part $\uvel_{\alpha}$. Both lie in $W^{k, \infty}$ since $\rmeas \in W^{k + 1, \infty}$ with $\essinf \rmeas > 0$ and $\uvel \in W^{k + 2, \infty}$.
  By \cref{sec:existence_minimal_regularity}, the minimizers $\diff_t$ exist and by \cref{lem:higher_regularity} they are in $W^{2, \infty}$.
  By \cref{thm:time_derivative} the derivatives $\dot{\diff}_t$ satisfy the PDE. 
  The upper bound of $\dot{\diff}_t$ in $W^{k + 2, \infty}$ follows from \cref{lem:elliptic_pde_solution}.
  The continuity of $t \mapsto \diff_t$ follows from \cref{lem:continuity_parameters}.
  The proof for $C^{k + 1}, C^{k + 2}$ is analogous.
\end{proof}
Eulerian solutions require second-order time derivatives. 
Proving their existence requires differentiability of solutions of elliptic PDEs with respect to their coefficients.
This is a standard result but usually not presented for $W^{2, \infty}$ or $C^2$ regularity since these spaces are not suitable for multidimensional elliptic regularity theory. 
\begin{lemma}[Differentiability with respect to parameters]
  \label[lemma]{lem:differentiability_parameters}
  For $k \geq 0$, let $\ugen \in W^{k, \infty}$.
  For $A\in W^{k + 1, \infty}$ with $0 < C_{\min} \leq A \leq C_{\max}$ and $\tfrac{\D \rmeas}{\D \Leb} \in W^{k + 1, \infty},$ let $\theta_{A}$ be the solution of the divergence form elliptic PDE
  \begin{equation}
    \theta_{A} \rmeas - \alpha \partial_x \left(A \partial_x \theta_{A} \rmeas \right) = \ugen \rmeas.
  \end{equation}
  For $B$ in a neighborhood of $A$, define $\bar{\theta}$ as the solution of the elliptic PDE
  \begin{equation}
    \label{eqn:theta_bar_equation}
    \bar{\theta} \rmeas - \alpha \partial_x \left(A \partial_x \bar{\theta}  \rmeas \right) = \alpha \partial_x \left((B - A) \partial_x \theta_A \rmeas \right).
  \end{equation}
  Then, we have 
  \begin{equation}
    \lim \limits_{\norm{B - A}_{W^{k + 1, \infty}} \rightarrow 0} \frac{\norm{\theta_B - \theta_A - \bar{\theta}}_{W^{k + 2, \infty}}}{\norm{B - A}_{W^{k + 1, \infty}}} = 0.
  \end{equation}
  Thus, $A \mapsto \theta_A$ is Fr\'echet differentiable from $W^{k + 1, \infty}$ to $W^{k + 2, \infty}$ in $A$ with derivative given by $(B - A) \mapsto \bar{\theta}$.
  The analogous result holds with $W^{k, \infty}, W^{k + 1, \infty}, W^{k + 2,\infty}$ replaced with $C^k, C^{k + 1}, C^{k + 2}$.
\end{lemma}
\begin{proof}
  Subtracting the equations defining $\theta_B$ and $\theta_A$, we obtain 
  \begin{equation}
    \left(\theta_B - \theta_A\right)\frac{\D \rmeas}{\D \Leb} - \alpha \partial_x \left(A \partial_x \left(\theta_B - \theta_A \right) \frac{\D \rmeas}{\D \Leb}\right) 
    = \alpha \partial_x \left( \left(B - A\right) \partial_x \theta_B \frac{\D \rmeas}{\D \Leb}\right).
  \end{equation}
  Subtracting the equation defining $\bar{\theta}$, we obtain
  \begin{equation}
    \left(\theta_B - \theta_A - \bar{\theta}\right)\frac{\D \rmeas}{\D \Leb} - \alpha \partial_x \left(A \partial_x \left(\theta_B - \theta_A - \bar{\theta} \right) \frac{\D \rmeas}{\D \Leb}\right) 
    = \alpha \partial_x \left( \left(B - A\right) \partial_x \left(\theta_B  - \theta_A \right) \frac{\D \rmeas}{\D \Leb}\right).
  \end{equation}
  The right hand side is in $W^{k, \infty}$ with a norm upper bounded by $\norm{B - A}_{W^{k + 1, \infty}}\norm{\theta_B - \theta_A}_{W^{k + 2, \infty}}$ times a constant.
  Thus, the result follows from \cref{lem:elliptic_pde_solution,lem:continuity_parameters}.
  The proof for $C^k, C^{k + 1}, C^{k + 2}$ is analogous.
\end{proof}
We now have all the differentiability properties to prove the existence of solutions to the Lagrangian and Eulerian pressureless IGR equations.
\begin{theorem}
  \label{thm:second_order_PDE}
  For $k \geq 0$, let the probability measure $\rmeas$ have a density in $W^{k + 1, \infty}$ with respect to the Lebesgue measure and let $\uvel_0 \in W^{k + 2, \infty}$.
  For  
  \begin{equation}
    f_{\alpha}^{u_0}(\diff) \coloneqq \int \limits_{a}^{b} \diff \cdot u_0 + \alpha \partial_x \diff \cdot \left(\partial_x u_0 \right) \D \rmeas(x) 
    - \int \limits_{a}^{b} \alpha \partial_x \diff \D \rmeas(x)
  \end{equation}
  and every $t > 0$, let $\diff_t$ be the minimizer of $F_{\alpha}^{f_{\alpha}^{t \uvel_0}, \rmeas}(\cdot)$ in $\Helly$.
  Then, $t \mapsto \diff_{t}$ is in $C^{2}\left([0, \infty); W^{k + 2, \infty}\right)$ and solves the Lagrangian pressureless IGR equation \cref{eqn:lagrangian_eqn}, 
\begin{equation}
    \begin{cases}
        \ddot{\diff}_t \frac{\D \rmeas}{\D \Leb} - \alpha \partial_x\left(\left[\partial_x \diff_t \right]^{-2} [\partial_x (\ddot{\diff}_t)] \frac{\D \rmeas}{\D \Leb}\right) 
        + 2 \alpha \partial_x \left(\left[\partial_x \diff_t \right]^{-3}  [\partial_x \dot{\diff}_t]^2 \frac{\D \rmeas}{\D \Leb}\right) = 0,\\
        \dot{\diff}_0 = \uvel_0 
        \quad \quad \diff_0 = \mathrm{Id},
        \quad \quad \forall t \geq 0: \diff_{t}(a) = a, \quad \diff_{t}(b) = b.
    \end{cases}
\end{equation}
  The analog result holds true for $W^{k, \infty}, W^{k + 1, \infty}, W^{k + 2,\infty}$ replaced with $C^k, C^{k + 1}, C^{k + 2}$.
\end{theorem}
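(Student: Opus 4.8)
The plan is to extract one further time derivative from \cref{thm:first_order_PDE} by composing the $C^1$ curve $t \mapsto \diff_t$ (already in hand) with the coefficient-to-solution map of the elliptic operator, whose Fréchet differentiability is \cref{lem:differentiability_parameters}.

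First I would record what is already available. Fix a compact interval $[0,T]$. By \cref{thm:first_order_PDE}, $t\mapsto\diff_t \in C^1([0,T];W^{k+2,\infty})$; by \cref{le:rho_lower_bound,le:minimizer_lower_bound} (applied with the velocity bound $\|t f_\alpha^{u_0}\|$, which is finite on $[0,T]$) there are constants $0 < C_{\min} \le C_{\max}$ with $C_{\min}\le\partial_x\diff_t\le C_{\max}$ on $[a,b]\times[0,T]$; and $\dot\diff_t$ is the weak solution of $\int\varphi\dot\diff_t + \alpha(\partial_x\diff_t)^{-2}\partial_x\varphi\,\partial_x\dot\diff_t\,\D\rmeas = f_\alpha^{u_0}(\varphi)$. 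Integrating by parts, $f_\alpha^{u_0}(\varphi) = \int\varphi\,\ugen\,\D\rmeas$ with $\ugen \coloneqq u_0 - \tfrac{\alpha}{\rmeas}\partial_x(\partial_x u_0\,\rmeas) \in W^{k,\infty}$ (using $\rmeas\in W^{k+1,\infty}$, $u_0\in W^{k+2,\infty}$). Setting $A_t \coloneqq (\partial_x\diff_t)^{-2}\in W^{k+1,\infty}$, which satisfies $0<C_{\max}^{-2}\le A_t\le C_{\min}^{-2}$, this says exactly $\dot\diff_t = \theta_{A_t}$ in the notation of \cref{lem:elliptic_pde_solution,lem:differentiability_parameters}.

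Second I would check that $t\mapsto A_t$ is $C^1$ from $[0,T]$ to $W^{k+1,\infty}$: since $t\mapsto\diff_t\in C^1([0,T];W^{k+2,\infty})$ we have $t\mapsto\partial_x\diff_t\in C^1([0,T];W^{k+1,\infty})$, and because $W^{k+1,\infty}$ is a Banach algebra and $z\mapsto z^{-2}$ is smooth on $[C_{\min},C_{\max}]$, the composition $A_t=(\partial_x\diff_t)^{-2}$ is $C^1$ with $\dot A_t = -2(\partial_x\diff_t)^{-3}\partial_x\dot\diff_t$. Now invoke \cref{lem:differentiability_parameters}: $A\mapsto\theta_A$ is Fréchet differentiable from $W^{k+1,\infty}$ to $W^{k+2,\infty}$, and its derivative is continuous in $A$ thanks to \cref{lem:continuity_parameters,lem:elliptic_pde_solution}. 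Composing the $C^1$ curve $t\mapsto A_t$ with this continuously-differentiable map and applying the chain rule gives $t\mapsto\dot\diff_t=\theta_{A_t} \in C^1([0,T];W^{k+2,\infty})$, i.e. $t\mapsto\diff_t\in C^2([0,T];W^{k+2,\infty})$, with $\ddot\diff_t = D_A\theta_{A_t}[\dot A_t]$. By the explicit form of the linearization in \cref{lem:differentiability_parameters}, $\ddot\diff_t$ is the solution of $\ddot\diff_t\,\rmeas - \alpha\partial_x(A_t\,\partial_x\ddot\diff_t\,\rmeas) = \alpha\partial_x(\dot A_t\,\partial_x\dot\diff_t\,\rmeas)$; substituting $A_t=(\partial_x\diff_t)^{-2}$ and $\dot A_t\,\partial_x\dot\diff_t = -2(\partial_x\diff_t)^{-3}(\partial_x\dot\diff_t)^2$ turns this into precisely \cref{eqn:lagrangian_eqn}. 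The initial/boundary data are read off as before: $\diff_0$ minimizes $F_\alpha^{0,\rmeas}$, so $\diff_0=\mathrm{Id}$; evaluating the first-order PDE at $t=0$ (where $\partial_x\diff_0\equiv 1$) gives $\dot\diff_0=u_0$ by uniqueness of the elliptic solution; and $\diff_t\in\helly$ forces $\diff_t(a)=a,\ \diff_t(b)=b$. Letting $T\to\infty$ yields the claim on $[0,\infty)$; the stated $W^{2,\infty}$ conclusion is weaker than the $W^{k+2,\infty}$ regularity we obtain, and the $C^k$ versions follow by using the $C^k$ variants of the cited lemmas throughout.

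The main obstacle is the third step: one must ensure the composition of a $C^1$ coefficient-curve with the Fréchet-differentiable solution operator is genuinely $C^1$ (not merely pointwise differentiable), which requires \emph{continuous} Fréchet differentiability of $A\mapsto\theta_A$ — hence the need for the continuity statements \cref{lem:continuity_parameters,lem:elliptic_pde_solution} alongside \cref{lem:differentiability_parameters} — and also that $t\mapsto\dot A_t$ be continuous into $W^{k+1,\infty}$, which is where the full $C^1([0,T];W^{k+2,\infty})$ conclusion of \cref{thm:first_order_PDE} (not just pointwise differentiability) is used. A secondary, harmless point is that the uniform ellipticity bounds on $\partial_x\diff_t$ degrade as $t\to\infty$, so all estimates are taken on compact time intervals, which suffices since $C^2([0,\infty);\cdot)$ is a local-in-time statement.
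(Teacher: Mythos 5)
Your proof is correct and follows exactly the route the paper intends: the paper's own proof is the single sentence ``Using \cref{lem:differentiability_parameters}, we can take a time derivative in \cref{eqn:first_order_lagrangian_eqn},'' and your write-up is a careful unpacking of precisely that step --- casting $\dot\diff_t = \theta_{A_t}$ with $A_t = (\partial_x\diff_t)^{-2}$, checking $t\mapsto A_t$ is $C^1$ into $W^{k+1,\infty}$, composing with the Fréchet-differentiable solution map, and substituting to recover \cref{eqn:lagrangian_eqn}. You also correctly supply the $\alpha$ factor and $\partial_x\theta_A$ (rather than $\theta_A$) in the linearized right-hand side, which the statement of \cref{lem:differentiability_parameters} appears to garble typographically but whose proof confirms.
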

\begin{proof}
  Using \cref{lem:differentiability_parameters}, we can obtain $\ddot{\diff}$ as the derivative of the solution of \cref{eqn:first_order_lagrangian_eqn} with respect to the solution field $\rmeas / (\partial_x \diff_t)^2$.
  The equation~\cref{eqn:theta_bar_equation} then yields the Lagrangian pressureless IGR equation.
\end{proof}
We can translate the above to Eulerian coordinates, obtaining the following theorem.
\begin{theorem}
  \label{thm:eulerian_PDE}
  For $k \geq 0$, let  $\rmeas \in C^{k + 1}([a, b])$ and $\uvel_0 \in C^{k + 2}([a, b])$.
  For  
  \begin{equation}
    f_{\alpha}^{u_0}(\diff) \coloneqq \int \limits_{a}^{b} \diff \cdot u_0 + \alpha \partial_x \diff \cdot \left(\partial_x u_0 \right) \D \rmeas(x) 
    - \int \limits_{a}^{b} \alpha \partial_x \diff \D \rmeas(x)
  \end{equation}
  and every $t > 0$, let $\diff_t$ minimize $F_{\alpha}^{f_{\alpha}^{t \uvel_0}, \rmeas}(\cdot)$ in $\Helly$.
  Define the Eulerian quantities $\uvel, \rho$ by $\uvel(x, t) \coloneqq \dot{\diff}_t \left(\diff_{t}^{-1}(x)\right)$ and $\rho(x, t) \coloneqq \left(\frac{\rmeas}{\partial_x \diff_t} \right)\left(\diff_{t}^{-1}(x)\right)$.
  Then, $t \mapsto \uvel(\cdot, t)$ is in the space $C^{0}\left([0, \infty); C^{k + 2}([a, b])\right) \cap C^{1}\left([0, \infty); C^{k + 1}([a, b])\right)$ and $t \mapsto \rho(\cdot, t)$ is in $C^{0}\left([0, \infty); C^{k + 1}([a, b])\right) \cap C^{1}\left([0, \infty); C^{k}([a, b])\right)$. 
  Together, they solve the Eulerian pressureless IGR equation \cref{eqn:1d-igr-pressureless},
\begin{equation}
    \begin{cases}
         \partial_t
         \begin{pmatrix}
              \rho \uvel \\
              \rho
         \end{pmatrix}
         + \partial_x
         \begin{pmatrix}
              \rho \uvel^2 + \Sigma \\ 
              \rho \uvel
         \end{pmatrix}
         = 
         \begin{pmatrix} 
              0\\
              0
         \end{pmatrix}, \quad &\textup{for} \quad x \in (a, b), t \in \R_+,\\
         \rho^{-1} \Sigma {- \alpha \partial_x(\rho^{-1} \partial_x \Sigma)} = 2 \alpha \left(\partial_x \uvel\right)^2, \quad &\textup{for} \quad x \in (a, b), t \in \R_+,\\
         \uvel(a, t) = \uvel(b, t) = \partial_x \Sigma(a,t) = \partial_x \Sigma(b, t) = 0, \quad &\text{for} \quad t \in \R_+, \\
         \uvel(x, 0) = \uvel_0(x), \quad \rho(x, 0) = \frac{\D \rmeas}{\D \Leb}(x), \quad &\textup{for} \quad x \in [a,b].
    \end{cases}
\end{equation}
\end{theorem}
\begin{lemma}
  \label[lemma]{lem:inverse_chain_continuous}
  For $k \geq 0$, let $t \mapsto g_t$ be in $C^{1}\left([0, \infty); C^{k + 1}([a, b])\right)$ and $t \mapsto h_t$ be in $C^{1}\left([0, \infty); C^{k + 1}([a, b])\right)$.
  Furthermore, assume that for all $t$, $0 < c \leq \partial_x h_t \leq C < \infty$ and $h_t$ is a monotone bijection from $[a ,b]$ onto itself.
  Then, the following holds.
  \begin{enumerate}[label=(\roman*)]
    
    \item The maps $t \mapsto g_t \circ h_t$ and $t \mapsto g_t \circ h_t^{-1}$ are in $C^{0}\left([0, \infty); C^{k + 1}([a, b])\right)$ with spatial derivatives
    \begin{equation}
      \label{eqn:xderivative_concatenation}
      \partial_x \left(g_t \circ h_t\right) 
      = \partial_x g_t \circ h_t \cdot \partial_x h_t 
      \quad 
      \text{and}
      \quad 
      \partial_x \left(g_t \circ h_t^{-1}\right) 
      = \frac{\partial_x g_t }{\partial_x h_t} \circ h_t^{-1} 
    \end{equation}
    \item The map $t \mapsto g_t \circ h_t^{-1}$ is in  $C^{1}\left([0, \infty); C^{k}([a, b])\right)$ with time derivatives given by
    \begin{equation}
      \partial_t \left(g_t \circ h_t^{-1}\right) 
      = \left(\partial_t g_t - \partial_x g_t \frac{\partial_t h_t}{\partial_x h_t}\right) \circ h_t^{-1}.
    \end{equation} 
  \end{enumerate}
\end{lemma}
\begin{proof}
  We prove the result for $k = 0$, the proof for $k > 0$ is analogous.
 
  (i): 
  The inverse function theorem implies that $h_t^{-1}$ is in $C^{1}([a, b])$ with derivative given by $1 /(\partial_x h_t) \circ h_t^{-1}$. 
  Thus, the chain rule implies that $g_t \circ h_t^{-1}$ is in $C^1([a, b])$ with derivative given by \cref{eqn:xderivative_concatenation}. 
  Since $g_t$ and $\frac{\partial_x g_t}{\partial_x h_t}$ are continuous over the compact set $[a, b]$, they are in particular uniformly continuous and thus $g_t \circ h_{t + s}^{-1}$ converges to $g_t \circ h_t^{-1}$ in $C^1$, for $s \to 0$.  
  Thus, by continuity of $t \mapsto g_t$ in $C^1$, 
  \begin{equation} 
    \begin{split}
    & \|g_{t + s} \circ h_{t + s}^{-1} -  g_{t} \circ h_{t}^{-1} \|_{C^1} 
    = \|g_{t + s} \circ h_{t + s}^{-1} - g_{t} \circ h_{t + s}^{-1} + g_{t} \circ h_{t + s}^{-1} -  g_{t} \circ h_{t}^{-1} \|_{C^1} \\
    \leq & \left\|g_{t + s} \circ h_{t + s}^{-1} - g_{t} \circ h_{t + s}^{-1}\right\|_{C^1} + \left\|g_{t} \circ h_{t + s}^{-1} -  g_{t} \circ h_{t}^{-1} \right\|_{C^1} \xrightarrow[]{s \to 0} 0.
    \end{split}
  \end{equation}
  (ii):
  Membership of $t \mapsto h_t$ in $C^1([0, \infty); C^{0 + 1})$ implies differentiability of $(x, t) \mapsto h_t(x)$. 
  Thus, the implicit function theorem implies that $\partial_t\left(h_t^{-1}(x)\right) = - \frac{\partial_t h_t}{\partial_x h_t} \circ h_t^{-1}$. 
  By the chain rule, we thus have 
    \begin{equation}
      \label{eqn:temp_derivative_concatenation}
    \partial_s\big|_{s = 0} \left(g_t \circ h_{t + s}^{-1}\right) =  - \partial_x g_t \frac{\partial_t h_t}{\partial_x h_t} \circ h_t^{-1}.
    \end{equation}
    To show that $g_t \circ h_t^{-1}$ is indeed in $C^1([0, \infty); C^0([a, b]))$, an empty sum and the triangle inequality yield
    \begin{equation} 
      \begin{split}
      & \frac{\left\|g_{t + s} \circ h_{t + s}^{-1} -  g_{t} \circ h_{t}^{-1}  - s \left(\partial_t g_t - \partial_x g_t \frac{\partial_t h_t}{\partial_x h_t}\right) \circ h_t^{-1}\right\|_{C^0}}{s} 
      \leq \frac{\left\|g_{t + s} \circ h_{t + s}^{-1} - g_{t} \circ h_{t + s}^{-1} - s \partial_t g_t \circ h_{t + s}^{-1}\right\|_{C^0}}{s} \\
      &+ \frac{\left\| g_{t} \circ h_{t + s}^{-1} -  g_{t} \circ h_{t}^{-1}  + s \partial_x g_t \frac{\partial_t h_t}{\partial_x h_t} \circ h_t^{-1} \right\|_{C^0}}{s} 
      + \frac{\left\|s(\partial_t g_t \circ h_{t + s}^{-1} - \partial_t g_t \circ h_{t}^{-1})\right\|_{C^0}}{s}.
      \end{split}
    \end{equation}
    The first summand goes to zero by temporal regularity of $g$ and the last summand by spatial regularity of $\partial_t g$.
    For the second term, the mean value theorem and \cref{eqn:temp_derivative_concatenation} yield, for a $\hat{s} \in [0, s]$ depending on $x$ and $s$,
    \begin{equation}
      \frac{\left\| g_{t} \circ h_{t + s}^{-1} -  g_{t} \circ h_{t}^{-1}  + s \partial_x g_t \frac{\partial_t h_t}{\partial_x h_t} \circ h_t^{-1} \right\|_{C^0}}{s} 
      = 
      \frac{\left\| - s \partial_x g_{t} \frac{\partial_t h_{t + \hat{s}}}{\partial_x h_{t + \hat{s}}} \circ h_{t + \hat{s}}^{-1} + s \partial_x g_t \frac{\partial_t h_t}{\partial_x h_t} \circ h_t^{-1} \right\|_{C^0}}{s}. 
    \end{equation}
    For every $T > 0$, $(t, x) \mapsto \partial_x g_t(x) \frac{\partial_t h_t(x)}{\partial_x h_t(x)} \circ h_{t + \hat{s}}^{-1}$ is continuous on the compact set $[0, T) \times [a, b]$ and thus uniformly continuous. 
    Thus, the right hand side converges to zero as $s \to 0$.
\end{proof}
\begin{proof}[Proof of \cref{thm:eulerian_PDE}]
  By \cref{thm:second_order_PDE}, $t \mapsto \diff_{t}$ is in $C^{2}\left([0, \infty); C^{k + 2}([a, b])\right)$ and solves equation \cref{eqn:lagrangian_eqn},
  \begin{equation}
      \begin{cases}
          \ddot{\diff}_t \frac{\D \rmeas}{\D \Leb} - \alpha \partial_x\left(\left[\partial_x \diff_t \right]^{-2} [\partial_x (\ddot{\diff}_t)] \frac{\D \rmeas}{\D \Leb}\right) 
          + 2 \alpha \partial_x \left(\left[\partial_x \diff_t \right]^{-3}  [\partial_x \dot{\diff}_t]^2 \frac{\D \rmeas}{\D \Leb}\right) = 0,\\
          \dot{\diff}_0 = \uvel_0 
          \quad \quad \diff_0 = \mathrm{Id},
          \quad \quad \forall t \geq 0: \diff_{t}(a) = a, \quad \diff_{t}(b) = b.
      \end{cases}
  \end{equation}
  As a minimizer of $F_{\alpha}^{f_{\alpha}^{t \uvel_0}, \rmeas}(\cdot)$, $\diff_t$ is strictly increasing and thus invertible.
  By \cref{le:minimizer_lower_bound}, $\partial_x \diff_t$ is uniformly bounded from below away from zero on $[a, b]$ and any compact set of $t$-values.
  Likewise, by \cref{le:rho_lower_bound}, $\partial_x \diff_t$ is uniformly bounded from above on $[a, b]$ and any compact set of $t$-values.
  Thus, \cref{lem:inverse_chain_continuous} implies that 
  $t \mapsto \uvel(\cdot, t)$ is in $C^{0}\left([0, \infty); C^{k + 2}([a, b])\right) \cap C^{1}\left([0, \infty); C^{k + 1}([a, b])\right)$ and that
  $t \mapsto \rho(\cdot, t)$ is in $C^{0}\left([0, \infty); C^{k + 1}([a, b])\right) \cap C^{1}\left([0, \infty); C^{k}([a, b])\right)$.
  By \cref{lem:inverse_chain_continuous}, their spatial derivatives are given by 
  \begin{equation}
    \partial_x \uvel = \frac{\partial_x \dot{\diff}_t}{\partial_x \diff_t} \circ \diff_{t}^{-1}(x)
    \quad \text{and} \quad
    \partial_x \rho = \left(\frac{\partial_x\left(\rmeas  / \partial_x \diff_t\right)}{\left(\partial_x \diff_t\right)}\right) \circ \diff_{t}^{-1}(x).
  \end{equation}
  We begin by verifying the mass transport equation $\partial_t \rho + \partial_x(\rho \uvel) = 0$.
  By \cref{lem:inverse_chain_continuous}, we have
  \begin{equation}
    \begin{split}
      \partial_t \rho 
      =& \partial_t \left(\frac{\rmeas}{\partial_x \diff_t} \circ \diff_{t}^{-1}\right)
      = \left(\partial_t \left(\frac{\rmeas}{\partial_x \diff_t}\right) 
        - \partial_x \left(\frac{\rmeas}{\partial_x \diff_t}\right) \frac{\dot{\diff}_t}{\partial_x \diff_t} \right) \circ \diff_{t}^{-1}\\
      =& \left(- \left(\frac{\rmeas \partial_x \dot{\diff}_t}{\left(\partial_x \diff_t\right)^2}\right) 
        - \partial_x \left(\frac{\rmeas}{\partial_x \diff_t}\right) \frac{\dot{\diff}_t}{\partial_x \diff_t} \right) \circ \diff_{t}^{-1}
      = - \rho \partial_x \uvel - \partial_x (\rho) \uvel,
    \end{split}
  \end{equation}
  verifying the mass transport equation.
  From the spatial derivative of $\uvel$, we obtain
  \begin{equation}
    \partial_x \uvel = \frac{\partial_x \dot{\diff}_t}{\partial_x \diff_t} \circ \diff_{t}^{-1}(x)
    \Rightarrow 
    \partial_x \left(\left[\partial_x \diff_t \right]^{-3}  [\partial_x \dot{\diff}_t]^2 \frac{\D \rmeas}{\D \Leb}\right) 
    =  \partial_x \left(\rho(\partial_x \uvel)^2  \circ \diff_t \right)
  \end{equation}
  Using \cref{lem:inverse_chain_continuous} again to compute the temporal derivative of $\uvel$, we obtain
  \begin{equation}
    \partial_t \uvel 
    = \partial_t \left(\dot{\diff}_t \circ \diff_t^{-1}\right)
    = \left(\partial_t \dot{\diff}_t - \partial_x \dot{\diff}_t \frac{\dot{\diff}_t}{\partial_x \diff_t}\right) \circ \diff_{t}^{-1}(x)
    \Rightarrow 
    \ddot{\diff}_t  = \left(\partial_t \uvel + \partial_x \uvel \cdot \uvel\right) \circ \diff_t.
  \end{equation}
  Combining these identities with the Lagrangian equation \cref{eqn:lagrangian_eqn}, we obtain
  \begin{equation}
    \left(\partial_t \uvel + \partial_x \uvel \cdot \uvel\right) \circ \diff_t \frac{\D \rmeas}{\D \Leb}
    - \alpha \partial_x \left(\rho \partial_x \left(\partial_t \uvel + \partial_x \uvel \cdot \uvel\right) \circ \diff_t \right)
    + 2 \alpha \partial_x \left(\rho (\partial_x \uvel)^2  \circ \diff_t \right) = 0.
  \end{equation} 
  Using again \cref{lem:inverse_chain_continuous}, we obtain 
  \begin{equation}
    \partial_x \left( \rho \partial_x \left(\partial_t \uvel + \partial_x \uvel \cdot \uvel\right) \circ \diff_t \right)
    = 
    \partial_x \left( \rho \partial_x \left(\partial_t \uvel + \partial_x \uvel \cdot \uvel\right) \right) \circ \diff_t \cdot \partial_x \diff_t
  \end{equation}
  and
  \begin{equation}
  \partial_x \left(\rho (\partial_x \uvel)^2  \circ \diff_t \right)
    = 
    \partial_x \left(\rho (\partial_x \uvel)^2 \right)\circ \diff_t  \cdot \partial_x \diff_t, 
  \end{equation}
  resulting in 
  \begin{equation}
    \left(\partial_t \uvel + \partial_x \uvel \cdot \uvel\right) \rho
    - \alpha \partial_x \left(\rho \partial_x \left(\partial_t \uvel + \partial_x \uvel \cdot \uvel\right) \right) 
    + 2 \alpha \partial_x \left(\rho (\partial_x \uvel)^2 \right) = 0.
  \end{equation} 
  By the mass transport equation, we have
  \begin{equation}
    \partial_t \rho + \partial_x (\rho \uvel) = 0 \Rightarrow \left(\partial_t \uvel + \partial_x \uvel \cdot \uvel\right) \rho = \partial_t (\rho \uvel) + \partial_x (\rho \uvel^2),
  \end{equation}
  resulting in 
  \begin{equation}
    \left(\partial_t(\rho \uvel) + \partial_x (\rho \uvel^2)\right)
    - \alpha \partial_x \left(\rho \partial_x \left(\frac{\partial_t(\rho \uvel) + \partial_x (\rho \uvel^2)}{\rho}\right) \right) 
    + 2 \alpha \partial_x \left(\rho (\partial_x \uvel)^2 \right) = 0.
  \end{equation} 
  Defining $\Sigma$ as
  \begin{equation}
    \Sigma(\cdot) 
    \coloneqq 
    - \alpha \left[\rho \partial_x \left(\frac{\partial_t(\rho \uvel) + \partial_x (\rho \uvel^2)}{\rho}\right)\right](a)
    + 2 \alpha \left[\rho (\partial_x \uvel)^2\right](a)
    -   \int \limits_{a}^{(\cdot)} \partial_t (\rho \uvel) + \partial_x (\rho \uvel^2) \D x, 
  \end{equation}
  we observe that $\partial_x \Sigma$ is zero in $x = a, b$ since $\uvel$ vanishes at the boundary.
  We now obtain 
  \begin{equation}
     \left(\partial_x \Sigma\right) 
    - \alpha \partial_x \left(\rho \partial_x \left(\frac{\partial_x \Sigma}{\rho}\right) \right) 
    - 2 \alpha \partial_x \left(\rho (\partial_x \uvel)^2 \right) = 0
    \implies
     \partial_x \left(\Sigma 
    - \alpha \left(\rho \partial_x \left(\frac{\partial_x \Sigma}{\rho}\right) \right) 
    - 2 \alpha \left(\rho (\partial_x \uvel)^2 \right)\right) = 0.
  \end{equation}
The constant offset part of $\Sigma$ was chosen such that the function vanishes at $x = a$.
Thus we have
\begin{equation}
  \Sigma 
  - \alpha \left(\rho \partial_x \left(\frac{\partial_x \Sigma}{\rho}\right) \right) 
  - 2 \alpha \left(\rho (\partial_x \uvel)^2 \right) = 0 
  \implies
  \frac{\Sigma}{\rho}
  - \alpha \left(\partial_x \left(\frac{\partial_x \Sigma}{\rho}\right) \right) 
  - 2 \alpha \left(\partial_x \uvel\right)^2 = 0 
\end{equation}
By uniqueness of solutions to this second order elliptic equation with Neumann data, we conclude that indeed  
\begin{equation}
  \partial_t (\rho \uvel) + \partial_x (\rho \uvel^2 + \Sigma) = 0,
\end{equation}
for $\Sigma$ defined as 
\begin{equation}
  \rho^{-1} \Sigma {- \alpha \partial_x(\rho^{-1} \partial_x \Sigma)} = 2 \alpha \left(\partial_x \uvel\right)^2, \quad \text{with} \quad \partial_x \Sigma(a,t) = \partial_x \Sigma(b, t) = 0.
\end{equation}
\end{proof}
\begin{remark}[Geodesic completeness of information geometrically regularized diffeomorphism manifolds]
  \cref{thm:second_order_PDE,thm:eulerian_PDE} imply that for $k \geq 2$, manifolds of unidimensional $k$-times differentiable diffeomorphisms are geodesically complete with respect to the dual geodesics induced by the potential 
  \begin{equation}
  \bpot(\diff) \coloneqq \int \limits_{a}^{b} \frac{\left(\diff_t(x) - x\right)^2}{2} - \alpha \log\left(\partial_x \diff\right) \D \rmeas(x).
  \end{equation}
  To this end, it is enough to observe that the geodesic starting in $\bar{\diff}$ in direction $\uvel$ is given by $t \mapsto \diff_t \circ \bar{\diff}$, where $\diff_t$ satisfies \cref{eqn:lagrangian_eqn} with $\uvel_0 = \uvel \circ \bar{\diff}$ and $\rmeas = \bar{\diff}_{\#} \Leb$.
\end{remark}

\section{Summary, conclusion, and outlook}
\label{sec:conclusion}
This work is a first step toward a rigorous understanding of information geometric regularization. 
We focus on the unidimensional pressureless case, and establish the existence of solutions in $C^{k}$ that match the regularity of the initial velocity field. 
By means of $\Gamma$-convergence, we also prove that IGR solutions converge to entropy solutions of the pressureless Euler equation in the limit $\alpha \to 0$.
Important future directions remain, most notably the inclusion of pressure forces and study of multidimensional problems.
In these cases, IGR lacks a straightforward variational form such as the one used in \cref{eqn:variational_exp}. 
We anticipate that these extensions will require techniques more akin to those ordinarily used in differential equations and evolutionary PDEs, such as fixed point arguments and energy estimates.
Even for the unidimensional pressureless case considered in this work, these techniques seem more natural for proving uniqueness of solutions, their continuous dependence on the initial data, and quantitative convergence results for $\alpha \to 0$ in the absence of shock waves.
We therefore defer these exciting directions to future work.

\section*{Acknowledgments}
The authors gratefully acknowledge support from the Air Force Office of Scientific Research under award number FA9550-23-1-0668 (Information Geometric Regularization for Simulation and Optimization of Supersonic Flow). RC acknowledges support through a PURA travel award from the Georgia Tech Office of Undergraduate Education. 
We thank the anonymous referees for their valuable suggestions that have helped us improve the article.
We also thank Dian Guan for his careful reading of this manuscript. 
This work was produced using Microsoft Copilot (autocompletion, spelling/grammar) and Claude Code and Codex (additional proofreading).
\bibliography{references}
\bibliographystyle{plain}

\appendix

\end{document}